\documentclass[a4paper]{article}

\usepackage{graphicx,amssymb}
\usepackage{amsmath}
\usepackage{cases}
\usepackage{float}
\usepackage{listings}
\usepackage{times}
\usepackage{tikz}
\usepackage{epic}
\usepackage{titlesec}
\usepackage{geometry}
\usepackage{amsthm}
\usepackage{bm}
\usepackage{stmaryrd}
\usepackage{subfigure}
\usepackage{indentfirst}
\usepackage{xcolor}
\usepackage{epstopdf}
\usepackage{enumerate}
\usepackage{enumitem}
\usepackage{hyperref}

\definecolor{red}{rgb}{1.00,0.00,0.00}
{\numberwithin{equation}{section}
\setlength{\parindent}{1em}

\newtheorem{theorem}{Theorem}[section]
\newtheorem{lemma}{Lemma}[section]
\newtheorem{remark}{Remark}[section]

\newtheorem{example}{Example}[section]
\newtheorem{assumption}{Assumption}[section]
\newtheorem{corollary}{Corollary}[section]

\SetSymbolFont{stmry}{bold}{U}{stmry}{m}{n}

\newcommand{\normmm}[1]{{\left\vert\kern-0.25ex\left\vert
\kern-0.25ex\left\vert #1
    \right\vert\kern-0.25ex\right\vert\kern-0.25ex\right\vert}}

\geometry{left=3cm,right=3cm,top=4cm,bottom=2.5cm}

\newcommand{\Red}[1]{\textcolor{black}{#1}}

\begin{document}           

\title{Staggered DG method for coupling of the Stokes and Darcy-Forchheimer problems}
\author{ Lina Zhao\footnotemark[1]\qquad
\;Eric Chung\footnotemark[2]\qquad
\;Eun-Jae Park\footnotemark[3]\qquad
\;Guanyu Zhou\footnotemark[4]\qquad}
\renewcommand{\thefootnote}{\fnsymbol{footnote}}
\footnotetext[1]{Department of Mathematics, The Chinese University of Hong Kong, Hong Kong Special Administrative Region. ({lzhao@math.cuhk.edu.hk})}
\footnotetext[2]{Department of Mathematics, The Chinese University of Hong Kong, Hong Kong Special Administrative Region. ({tschung@math.cuhk.edu.hk})}
\footnotetext[3]{Department of Computational Science and Engineering, Yonsei University, Seoul 03722, Republic of Korea. ({ejpark@yonsei.ac.kr})}
\footnotetext[4]{Department of Applied Mathematics, The Tokyo University of Science, 1-3 Kagurazaka, Shinjuku-Ku, Tokyo, 162-8601 Japan. ({zhoug@rs.tus.ac.jp})}
\maketitle

\textbf{Abstract:}
In this paper we develop a staggered discontinuous Galerkin method for the Stokes and Darcy-Forchheimer problems coupled with the \Red{Beavers-Joseph-Saffman} conditions. The method is defined by imposing staggered continuity for all the variables involved and the interface conditions are enforced by switching the roles of the variables met on the interface, which eliminate the hassle of introducing additional variables. This method can be flexibly applied to rough grids such as the highly distorted grids and the polygonal grids. In addition, the method allows nonmatching grids on the interface thanks to the special inclusion of the interface conditions, which is highly appreciated from a practical point of view. A new discrete trace inequality and a generalized Poincar\'{e}-Friedrichs inequality are proved, which enables us to prove the optimal convergence estimates under reasonable regularity assumptions. Finally, several numerical experiments are given to illustrate the performances of the proposed method, and the numerical results indicate that the proposed method is accurate and efficient, in addition, it is a good candidate for practical applications.

\vskip 0.2cm \textbf{Key words:} Staggered DG method, The Stokes equations, Darcy-Forchheimer equations, Highly distorted grids, \Red{Beavers-Joseph-Saffman} interface conditions, Nonmatching grids

 \pagestyle{myheadings} \thispagestyle{plain} \markboth{Zhao}
    {SDG method for coupled Stokes and Darcy-Forchheimer problems}

\section{Introduction}

%
%
%
%

The fluid flow between porous media and free-flow zones have extensive applications in hydrology, environment science and biofluid dynamics, which motivates a lot of researchers to derive suitable mathematical and numerical models for the fluid movement. The system can be viewed as a coupled problem with two physical systems interacting across an interface. The simplest mathematical formulation for the coupled problem is the coupled Stokes and Darcy flows problems, which imposes the Stokes equations in the Stokes region and the Darcy's law in the porous media region, coupled with proper interface conditions. The most suitable and popular interface conditions are called Beavers-Joseph-Saffman conditions, which are proposed by Saffman \cite{Saffman71}. The coupled model has received great attention and a large number of works have been devoted to the coupled Stokes and Darcy flows problem, see, for example \cite{Discacciati02,Layton03,Riviere052,Riviere05,Burman07,Gatica11,Girault14,Lipnikov14}. However, Darcy's law only provides linear relationship between the gradient of pressure and velocity, which usually fails for complex physical problems. Forchheimer \cite{Forchheimer1901} conducted flow experiments in sandpacks and recognized that for moderate Reynolds numbers (Re $>0.1$ approximately), Darcy's law is not adequate. He found that the pressure gradient and Darcy velocity should satisfy nonlinear relationship and the Darcy-Forchheimer law accounts for this nonlinear behavior. Mixed methods for the steady Darcy-Forchheimer flow was studied in more general setting in \cite{Park95} and unsteady case was analyzed in \cite{KimPark99,Park05}. A nonconforming primal mixed method for the Darcy-Forchheimer was introduced
in \cite{Girault08} and a block-centered finite difference method was considered in \cite{RuiPan12,RuiLiu15}.


Developing robust numerical schemes on general meshes has drawn great attention in recent years, and numerous numerical schemes have been developed on general meshes. Among all the methods, we mention in particular the virtual element (VEM) method, the hybrid high order (HHO) method and the weak Galerkin (WG) method \cite{Beir13,PietroErn14,WangYe14,BurmanErn18}. However, to the best of our knowledge, none of the existing numerical schemes applicable on general meshes have been exploited to simulate the coupling of the Stokes and Darcy-Forchheimer problems. In addition, very few results are available for coupling of the Stokes and Darcy-Forchheimer problems even on triangular meshes, and it is not an easy task to design numerical schemes for this coupled model. In \cite{ZhangRui16} a stabilized Crouzeix-Raviart element method is developed and optimal order error estimates are achieved. In \cite{Almonacid19} a fully mixed method is proposed, whereas the authors can only obtain a suboptimal convergence rate. Staggered discontinuous Gaerkin (DG) method is initially developed by Chung and Engquist \cite{ChungWave06,ChungWave09}, since then it has been successfully applied to a wide range of partial differential equations arising from practical applications, see, e.g., \cite{ChungEngquist,ChungCiarletYu13,KimChungLee,KimChungLam16,ChungParkLina,
LinaParkStoke-tri,LinaParkconvection}. Recently, staggered DG method has been successfully designed for Darcy law and the Stokes equations, respectively, on general quadrilateral and polygonal meshes \cite{LinaPark,LinaParkShin}, and the numerical results proposed therein indicate that it is accurate and robust for rough grids. Staggered DG method has many distinctive features such as local and global conservations, optimal convergence estimates and robustness to mesh distortion. Furthermore, the method favors adaptive mesh refinement by allowing hanging nodes. Thus, staggered DG method is a good candidate for problems under real applications, and it will be beneficial if one can apply this method to physical problems. Therefore, the goal of this paper is to initiate a study on staggered DG method for the coupled Stokes and \Red{Darcy-Forchheimer} problems that can be flexibly applied to general quadrilateral and polygonal meshes, in addition, arbitrary polynomial orders are allowed.

The key idea of staggered DG method is to divide the \Red{initial} partition (general quadrilateral or polygonal meshes) into the union of triangles by connecting the interior point to all the vertices of the initial partition. Then the corresponding basis functions for the associated variables with staggered continuity are defined on the resulting triangulations. The interface conditions are imposed by switching the roles of the variables met on the interface. By doing so, no additional variables are introduced on the interface, which is different from the method proposed in \cite{Almonacid19}. The resulting method allows nonmatching grids on the interface due to the special inclusion of the interface conditions.
In addition to the aforementioned features, another distinctive property is that the staggered continuity property naturally gives an interelement flux term in contrast to other DG methods, where the numerical fluxes or penalty parameters have to be carefully chosen. Note that developing stable numerical schemes for the Stokes and Darcy-Forchheimer problems is still in its infancy, and the present method is well suited to general quadrilateral and polygonal meshes, which will definitely inspire more works in this direction.

The primary difficulty for the convergence estimates results from the terms appearing on the interface. To overcome this issue, a novel discrete trace inequality and a generalized Poincar\'{e}-Friedrichs inequality are proposed. Our analysis can be carried out by
exploiting these two inequalities and the monotone property of the nonlinear operator under certain regularity assumptions without any restrictions on the source terms. The main novelties of this paper are as follows: First, this is the first result for coupling of the Stokes and Darcy-Forchheimer problems  which can be flexibly applied to general quadrilateral and polygonal meshes and well adapted to incorporate hanging nodes in the construction of the method. Second, we can prove the optimal convergence estimates by proving some new discrete trace inequality and generalized Poincar\'{e}-Friedrichs inequality.

The rest of the paper is organized as follows. In the next section we derive staggered DG method for the coupled Stokes and Darcy-Forchheimer problems and present some preliminary lemmas. Then in Section~\ref{sec:error} we present the convergence estimates, where the discrete trace inequality and the generalized Poincar\'{e}-Friedrichs inequality are the crucial ingredients.  Several numerical experiments are carried out in Section~\ref{sec:numerical} to validate the theoretical results. Finally, the conclusion is given in Section~\ref{sec:conclusion}.

\section{Description of staggered DG method}
\subsection{Preliminaries}

Let $\Omega_S$ and $\Omega_D$ be two bounded and simply connected polygonal domains in $\mathbb{R}^2$ such that $\partial \Omega_S\cap \partial \Omega_D=\Gamma\neq \emptyset$ and $\Omega_S\cap \Omega_D=\emptyset$. Then, let $\Gamma_S:=\partial \Omega_S\backslash \overline{\Gamma}$, $\Gamma_D:=\partial \Omega_D\backslash \overline{\Gamma}$ and denote by $\bm{n}_S$ the unit normal vector pointing from $\Omega_S$ to $\Omega_D$ and by $\bm{n}_D$ the unit normal vector pointing from $\Omega_D$ to $\Omega_S$, on the interface $\Gamma$ we have $\bm{n}_D=-\bm{n}_S$. In addition, $\bm{t}$ represents the unit tangential vector along the interface $\Gamma$. Figure~\ref{coupled} gives a schematic representation of the geometry.

When kinematic effects surpass viscous effects in a porous medium, the Darcy velocity $\bm{u}_D$ and the pressure gradient $\nabla p_D$ does not satisfy a linear relation. Instead, a nonlinear approximation, known as the Darcy-Forchheimer model, is considered. When it is imposed on the porous medium $\Omega_D$ with homogeneous Dircihlet boundary condition on $\Gamma_D$ the equations read:
\begin{align}
\nabla \cdot \bm{u}_D&=f_D\quad \mbox{in}\;\Omega_D,\label{eq:Darcy1}\\
\frac{\mu}{\rho}K^{-1}\bm{u}_D+\frac{\beta}{\rho}|\bm{u}_D|\bm{u}_D+\nabla p_D&=\bm{g}_D\quad \mbox{in}\;\Omega_D,\label{eq:Darcy2}\\
p_D&=0 \qquad \mbox{on}\; \Gamma_D,
\end{align}
where $K$ is the permeability tensor, assumed to be uniformly positive definite and bounded, $\rho$ is the density of the fluid, $\mu$ is its viscosity and $\beta$ is a dynamic viscosity, all assumed to be positive constants. In addition, $\bm{g}_D$ and $f_D$ are source terms. We remark that in this context we exploit homogeneous Dirichlet boundary condition, in fact, we can also consider homogeneous Neumann boundary condition, i.e., $\bm{u}_D\cdot\bm{n}_D=0\;\mbox{on}\;\Gamma_D$ and the arguments used in this paper are still true.

The fluid motion in $\Omega_S$ is described by the Stokes equations:
\begin{align}
\nabla \cdot \bm{\sigma}_S+\nabla p_S&=\bm{f}_S\hspace{1.15cm} \mbox{in}\; \Omega_S\label{eq:Stokes1},\\
\bm{\sigma}_S&=-\nu \nabla \bm{u}_S\quad \mbox{in}\; \Omega_S\label{eq:Stokes2},\\
\nabla \cdot \bm{u}_S&=0\hspace{1.45cm}\mbox{in}\; \Omega_S,\label{eq:Stokes3}\\
\bm{u}_S&=0\hspace{1.45cm}\mbox{on}\; \Gamma_S,
\end{align}
where $\nu>0$ denotes the viscosity of the fluid.

On the interface, we prescribe the following interface conditions
\begin{subequations}
\begin{align}
\bm{u}_S\cdot\bm{n}_{S}&=\bm{u}_D\cdot\bm{n}_{S} \quad\; \mbox{on}\; \Gamma,\label{interface1}\\
p_S-\nu\bm{n}_{S}\frac{\partial\bm{u}_S}{\partial \bm{n}_{S}}&=p_D\hspace{1.3cm} \mbox{on}\; \Gamma,\label{interface-normal}\\
-\nu \bm{t}\frac{\partial \bm{u}_{S}}{\partial \bm{n}_{S}}&=G\bm{u}_{S}\cdot\bm{t}\qquad \mbox{on}\; \Gamma.\label{interface-BJS}
\end{align}
\label{interface-condition}
\end{subequations}
Condition (\ref{interface1}) represents continuity of the fluid velocity's normal components, (\ref{interface-normal}) represents the balance of forces acting across the interface, and (\ref{interface-BJS}) is the Beaver-Joseph-Saffman condition \cite{Beaver}. The constant $G>0$ is given and is usually obtained from experimental data.

Before closing this section, we introduce some notations that will be used later. Let $D\subset \mathbb{R}^d, d=1,2$. By $(p, q)_D:=\int_D pq\;dx$, we
denote the inner product in $L^2(D),D\subset \mathbb{R}^2$. We use the same symbol $(\cdot,\cdot)_D$ for the scalar product in $L^2(D)^2$ and in $L^2(D)^{2\times 2}$. We denote by $\langle \cdot,\cdot\rangle_e$ the inner product in $L^2(D),D\subset \mathbb{R}$, and its vector and tensor versions.
Given an integer  $m\geq 0$ and $p\geq 1$, $W^{m,p}(D)$ and
$W_0^{m,p}(D)$ denote the usual Sobolev space provided the norm
and semi-norm $\|v\|_{W^{m,p}(D)}=\{\sum_{|\alpha|\leq m}\|D^\alpha
v\|^p_{L^p(D)}\}^{1/p}$, $|v|_{W^{m,p}(D)}=\{\sum_{|\alpha|=
m}\|D^\alpha v\|^p_{L^p(D)}\}^{1/p}$. If $p=2$ we usually write
$H^m(D)=W^{m,2}(D)$ and $H_0^m(D)=W_0^{m,2}(D)$,
$\|v\|_{H^m(D)}=\|v\|_{W^{m,2}(D)}$ and $|v|_{H^m(D)}=|v|_{W^{m,2}(D)}$.
In the sequel, we use $C$ to denote a generic positive constant which may have different values at different occurrences.


\begin{figure}
\centering
\includegraphics[width=4cm]{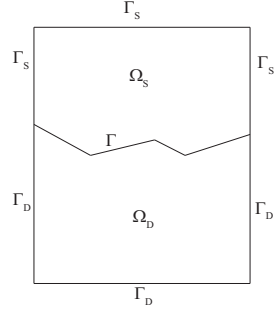}
\caption{Coupled domain with interface $\Gamma$.}
\label{coupled}
\end{figure}

\subsection{Weak formulation}

This section presents the derivation of the weak formulation for the model problems \eqref{eq:Darcy1}-\eqref{interface-BJS}. To begin, set
\begin{align*}
U_S^0=\{\bm{v}_S\in H^1(\Omega_S)^2: \bm{v}_S\mid_{\Gamma_S}=\bm{0}\},\quad \Sigma_0=\{q_D\in  W^{1,3/2}(\Omega_D):q_D\mid_{\Gamma_D}=0\}.
\end{align*}
Then multiplying \eqref{eq:Stokes1} by $\bm{v}_S\in U_S^0$, \eqref{eq:Darcy1} by $q_D\in \Sigma_0$, integration by parts and adding these two equations imply
\begin{align*}
&-(\bm{\sigma}_S, \nabla \bm{v}_S)_{\Omega_S}+\langle \bm{\sigma}_S\bm{n}_{S},\bm{v}_S\rangle_{\Gamma}-(p_S,\nabla \cdot\bm{v}_S)_{\Omega_S}+\langle p_S, \bm{v}\cdot\bm{n}_{S}\rangle_\Gamma-(\bm{u}_D, \nabla q_D)_{\Omega_D}\\
&\;+\langle \bm{u}_D\cdot\bm{n}_{D},q_D\rangle_\Gamma=(\bm{f}_S,\bm{v}_S)_{\Omega_S}+(f_D,q_D)_{\Omega_D}.
\end{align*}
Simple algebraic calculation yields
\begin{align*}
\langle \bm{\sigma}_S\bm{n}_{S},\bm{v}_S\rangle_\Gamma
=\langle\bm{n}_{S}\bm{\sigma}_S\bm{n}_{S},\bm{v}_S\cdot\bm{n}_{S}\rangle_\Gamma+\langle \bm{n}_{S}\bm{\sigma}_S\bm{t},\bm{v}_S\cdot\bm{t}\rangle_\Gamma,
\end{align*}
which gives by employing \eqref{interface-normal} and \eqref{interface-BJS}
\begin{align}
\langle (p_SI+\bm{\sigma}_S)\bm{n}_{S},\bm{v}_S\rangle_\Gamma=\langle p_D, \bm{v}_S\cdot\bm{n}_{S}\rangle_\Gamma+G\langle \bm{u}_S\cdot\bm{t},\bm{v}_S\cdot\bm{t}\rangle_\Gamma.\label{eq:relation}
\end{align}
Here, $I$ is the $2\times 2$ identity matrix.

Therefore, the weak formulation for the coupled Stokes and Darcy-Forchheimer problems reads: find $(\bm{\sigma}_S, \bm{u}_S,p_S)$ $ \in L^2(\Omega_S)^{2\times 2}\times U_S^0\times L^2(\Omega_S)$ and $(\bm{u}_D,p_D)\in L^3(\Omega_D)^2\times \Sigma_0$ such that
\begin{equation}
\begin{split}
&-(\bm{\sigma}_S,\nabla \bm{v}_S)_{\Omega_S}+\langle p_D, \bm{v}_S\cdot\bm{n}_{S}\rangle_\Gamma+G\langle \bm{u}_S\cdot\bm{t}, \bm{v}_S\cdot\bm{t}\rangle_\Gamma-(p_S,\nabla \cdot \bm{v}_S)_{\Omega_S}-(\bm{u}_D,\nabla q_D)_{\Omega_D}\\
&-\langle \bm{u}_S\cdot\bm{n}_{S},q_D\rangle_\Gamma=
(f_D,q_D)_{\Omega_D}+(\bm{f}_S,\bm{v}_S)_{\Omega_S},\\
&\frac{\mu}{\rho}( K^{-1}\bm{u}_D,\bm{v}_D)_{\Omega_D}+
\frac{\beta}{\rho}(|\bm{u}_D|\bm{u}_D,\bm{v}_D)_{\Omega_D}+(\nabla p_D,\bm{v}_D)_{\Omega_D}+\nu^{-1}(\bm{\sigma}_S,\bm{\tau}_S)_{\Omega_S}+(\nabla \bm{u}_S, \bm{\tau}_S)_{\Omega_S}\\
&=(\bm{g},\bm{v}_D)_{\Omega_D},\\
& (\nabla\cdot \bm{u}_S, q_S)_{\Omega_S}=0,\\
&\forall (\bm{\tau}_S,\bm{v}_S,q_S)\in L^2(\Omega_S)^{2\times 2}\times U_S^0\times L^2(\Omega_S),\quad (\bm{v}_D,q_D)\in L^3(\Omega_D)^2\times \Sigma_0.
\end{split}
\label{eq:weak}
\end{equation}

\subsection{Description of staggered DG method}

\begin{figure}
\centering
\includegraphics[width=12cm]{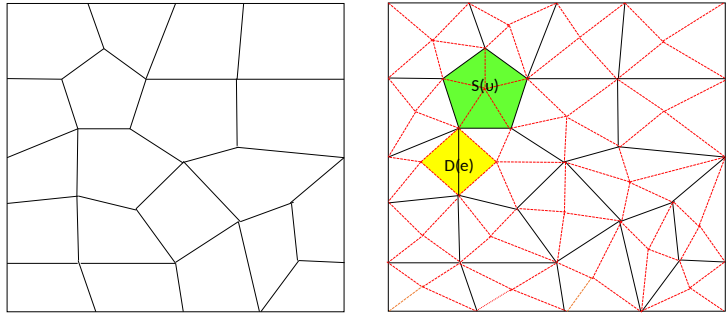}
\setlength{\abovecaptionskip}{-0.5cm}
\caption{Schematic of the primal mesh $S(\nu)$, the dual mesh $D(e)$ and the primal simplexes.}
\label{grid}
\end{figure}

This section gives a detailed derivation of staggered DG method for \eqref{eq:Darcy1}-\eqref{interface-BJS}. Then the inf-sup condition for the bilinear forms related to the Darcy-Forchheimer region is proved motivated by the methodology employed in \cite{ChungWave09}. First, we introduce the staggered meshes and the spaces exploited in the definition of staggered DG method. Following \cite{LinaPark,LinaParkShin}, we first let $\mathcal{T}_{u_i}$ ($i=S,D$) be the
initial partition of the domain $\Omega_i$ into non-overlapping quadrilateral or polygonal meshes. We require that $\mathcal{T}_{u_i}$ be aligned with $\Gamma$.
We also let $\mathcal{F}_{u,i}$ be the set of all edges in the initial partion $\mathcal{T}_{u_i}$ and $\mathcal{F}_{u,i}^{0}\subset \mathcal{F}_{u,i}$ be the
subset of all interior edges of $\Omega_i$. The union of all the nodal points in the initial partition $\mathcal{T}_{u_i}$ is denoted as $\mathcal{P}_i$ and $\mathcal{P}=\mathcal{P}_S\cup \mathcal{P}_D$.
For each quadrilateral or polygonal mesh $E$ in the initial partition $\mathcal{T}_{u_i}$, we select an interior point $\nu$ and
create new edges by
connecting $\nu$ to the vertices of the primal element.
This process will divide $E$ into the union of triangles, where the triangle is denoted as $\tau$, and we rename
the union of these triangles by $S(\nu)$. The union of all $\nu \in \mathcal{T}_{u_i}$ is denoted as $\mathcal{N}_i$ and $\mathcal{N}=\mathcal{N}_S\cup \mathcal{N}_D$. We remark that $S(\nu)$ is the quadrilateral or polygonal mesh in the initial partition.
Moreover, we will use $\mathcal{F}_{p,i}$ to denote the set of all the new edges generated by this subdivision process and
use $\mathcal{T}_{h_i}$ to denote the resulting quasi-uniform triangulation,
on which our basis functions are defined.
In addition, we define $\mathcal{F}_i:=\mathcal{F}_{u,i}\cup \mathcal{F}_{p,i}$ and $\mathcal{F}_i^{0}:=\mathcal{F}_{u,i}^{0}\cup \mathcal{F}_{p,i}$. For each triangle
$\tau\in \mathcal{T}_{h_i}$, we let $h_\tau$ be the diameter of
$\tau$ and $h_i=\max\{h_\tau, \tau\in \mathcal{T}_{h_i}\}$, and we define $h=\max\{h_S,h_D\}$. Also, we let $h_e$ to denote the length of edge $e\in \mathcal{F}_i$.
This construction is illustrated in Figure~\ref{grid},
where black solid lines are edges in $\mathcal{F}_{u,i}$
and red dotted lines are edges in $\mathcal{F}_{p,i}$.

Moreover, 
the union of all $D(e)$ for $e\in \mathcal{F}_i$ is denoted as $\mathcal{D}_i$. For each interior edge $e\in \mathcal{F}_{u,i}^0$, we use $D(e)$ to denote the union of the two triangles in $\mathcal{T}_{h_i}$ sharing the edge $e$,
and for each boundary edge $e\in\mathcal{F}_{u,i}\backslash\mathcal{F}_{u,i}^0$, we use $D(e)$ to denote the triangle in $\mathcal{T}_{h_i}$ having the edge $e$,
see Figure~\ref{grid}.

For each edge $e$, we define
a unit normal vector $\bm{n}_{e}$ as follows: If $e\in \mathcal{F}_i\setminus \mathcal{F}_i^{0}$, then
$\bm{n}_{e}$ is the unit normal vector of $e$ pointing towards the outside of $\Omega_i$. If $e\in \mathcal{F}_i^{0}$, an
interior edge, we then fix $\bm{n}_{e}$ as one of the two possible unit normal vectors on $e$.
When there is no ambiguity,
we use $\bm{n}$ instead of $\bm{n}_{e}$ to simplify the notation.

We also make the following mesh regularity assumptions (cf. \cite{Beir13,Cangiani16,LinaPark}).
\begin{assumption}\label{assum:regularity}
We assume the existence of a constant $\rho>0$ such that
\begin{enumerate}[itemindent=1em,,label=(\arabic*)]
  \item For every element $E\in \mathcal{T}_{u_i}$ $(i=S,D)$ and every edge $e\in \partial E$, it satisfies $h_e\geq \rho h_E$, where $h_E$ denotes the diameter of $E$.
  \item Each element $E$ in $\mathcal{T}_{u_i}$ is star-shaped with respect to a ball of radius $\geq \rho h_E$.
\end{enumerate}
\end{assumption}
We remark that the above assumptions ensure that the triangulation $\mathcal{T}_{h_i}$ is shape regular.

For a scalar or vector function $v$ belonging to broken Sobolev space, its jump on $e\in
\mathcal{F}_{p,i}$ is defined as
\begin{equation*}
[v]=v_{1}-v_{2},
\end{equation*}
where $v_j=v_{\tau_j},j=1,2$ and $\tau_{1}$, $\tau_{2}$ are the
two triangles in $\mathcal{T}_{h_i}$ having the edge $e$. For the boundary edges, we simply define $[v]=v_{1}$.

Let $k\geq 0$ be the order of approximation. For every $\tau
\in \mathcal{T}_{h_i}$ and $e\in\mathcal{F}_i$,
we define $P_{k}(\tau)$ and $P_{k}(e)$ as the spaces of polynomials of degree less than or equal to $k$ on $\tau$ and $e$, respectively.
Next, we will introduce the staggered DG spaces.
First, we define the following locally $H^{1}(\Omega)-$conforming
staggered DG space $U_{h_i}$ for $i=S,D$
\begin{equation*}
U_{h_i}:=\{v : v\mid_{\tau}\in P_{k}(\tau), \forall\tau \in \mathcal{T}_{h_i}; [v]\mid_e=0\;\forall e\in \mathcal{F}_{u,i}^0, v\mid_{\Gamma_i}=0\}.
\end{equation*}
Notice that, if $w\in U_{h_i}$, then $w\mid_{D(e)}\in H^1(D(e))$ for each edge $e\in \mathcal{F}_{u,i}$. The degrees of freedom for this space can be described as:

(UD1) For each edge $e\in \mathcal{F}_{u,i}$, we have
\begin{align*}
\phi_e(w):=\langle w,p_k\rangle_e \quad \forall p_k\in P_k(e).
\end{align*}

(UD2) For each $\tau\in \mathcal{T}_{h_i}$, we have
\begin{align*}
\phi_\tau(w):=(w, p_{k-1})_\tau \quad \forall p_{k-1}\in P_{k-1}(\tau).
\end{align*}

The discrete $H^1$ norm for the space $U_{h_S}$ is given by
\begin{equation*}
\begin{split}
\|v\|_{Z_S}^2&=\sum_{\tau\in \mathcal{T}_{h_S}}\|\nabla v\|_{L^2(\tau)}^2+\sum_{e\in \mathcal{F}_{p,S}}h_e^{-1}\|[v]\|_{0,e}^2.
\end{split}
\end{equation*}
We define the following norm for $\bm{v}=(v_1,v_2)\in [U_{h_S}]^2$
\begin{align*}
\|\bm{v}\|_h^2=\|v_1\|_{Z_S}^2+\|v_2\|_{Z_S}^2.
\end{align*}
Due to the nonlinear term in the Darcy-Forchheimer region, we adjust the commonly employed norms for staggered DG space, instead, we introduce the following mesh-dependent norms for $U_{h_D}$, which flavors our analysis
\begin{align*}
\|v\|_{X_D}^{3/2}&=\int_{\Omega_D} |v|^{3/2}+\sum_{e\in \mathcal{F}_{u,D}^0\cup \Gamma}h_e \int_e |v|^{3/2}\;ds,\\
\|v\|_{Z_D}^{3/2}&=\sum_{\tau\in \mathcal{T}_{h_D}}\int_{\tau} |\nabla v |^{3/2}\;dx+\sum_{e\in \mathcal{F}_{p,D}}h_e^{-1/2}\int_e|[v]|^{3/2}\;ds.
\end{align*}

We next define the following staggered DG space $\bm{V}_{h_i}$
\begin{equation*}
\bm{V}_{h_i}:=\{\bm{\tau}: \bm{\tau}\mid_{\tau} \in [P_{k}(\tau)]^2,\forall \tau \in \mathcal{T}_{h_i};[\bm{\tau}\cdot\bm{n}]\mid_e=0, \forall e\in \mathcal{F}_{p,i}\}.
\end{equation*}
Note that if $\bm{v}\in \bm{V}_{h_i}$, then $\bm{v}\mid_{S(\nu)}\in H(\textnormal{div};S(\nu))$ for each $S(\nu)\in \mathcal{T}_{u_i}$. The following degrees of freedom can be defined correspondingly.

(VD1) For each edge $e\in \mathcal{F}_{p,i}$, we have
\begin{align*}
\psi_e(\bm{v}):=\langle \bm{v}\cdot\bm{n}, p_k\rangle_e \quad \forall p_k\in P_k(e).
\end{align*}

(VD2) For each $\tau\in \mathcal{T}_{h_i}$, we have
\begin{align*}
\psi_\tau(\bm{v}):=(\bm{v}, \bm{p}_{k-1})_\tau \quad \forall \bm{p}_{k-1}\in P_{k-1}(\tau)^2.
\end{align*}
We define the following discrete $L^2$ norm and discrete $H(\textnormal{div};\Omega_S)$ seminorm for $\bm{V}_{h_S}$
\begin{equation*}
\begin{split}
\|\bm{\tau}\|_{X_S'}^2&=\|\bm{\tau}\|_{L^2(\Omega_S)}^2+\sum_{e\in \mathcal{F}_{p,S}}h_e\|\bm{\tau}\cdot \bm{n}\|_{0,e}^2,\\
\|\bm{\tau}\|_{Z_S'}^2&=\sum_{\tau\in \mathcal{T}_{h_S}}\|\nabla \cdot \bm{\tau}\|_{0,\tau}^2+\sum_{e\in \mathcal{F}_{u,S}^0}h_e^{-1}\| [\bm{\tau}\cdot \bm{n}]\|_{0,e}^2+\sum_{e\in \Gamma}h_e^{-1}\|\bm{\tau}\cdot \bm{n}\|_{0,e}^2.
\end{split}
\end{equation*}
In addition, $\bm{V}_{h_D}$ is equipped by
\begin{align*}
\|\bm{v}_D\|_{X_D'}^3&=\int_{\Omega_D} |\bm{v}_D|^3\;dx+\sum_{e\in \mathcal{F}_{p,D}}h_e\int_e |\bm{v}_D\cdot\bm{n}|^3\;ds,\\
\|\bm{v}_D\|_{Z_D'}^3&=\sum_{\tau\in \mathcal{T}_{h_D}}\int_{\tau} |\nabla \cdot \bm{v}_D|^{3}\;dx+\sum_{e\in \mathcal{F}_{u,D}^0}h_e^{-2}\int_e |[\bm{v}_D\cdot\bm{n}]|^{3}\;ds+\sum_{e\in \Gamma}h_e^{-2}\int_e |\bm{v}_D\cdot\bm{n}|^{3}\;ds.
\end{align*}
Scaling arguments imply there exists a positive constant $C$ such that
\begin{align}
C\|\bm{\tau}\|_{X_S'}\leq \|\bm{\tau}\|_{L^2(\Omega_S)}\leq \|\bm{\tau}\|_{X_S'} \quad\forall \bm{\tau}\in \bm{V}_{h_S}.\label{scaling}
\end{align}


Finally, locally $H^{1}(\Omega)-$conforming finite element space for pressure is defined as:
\begin{equation*}
P_{h}:=\{q: q\mid_{S(\nu)}\in P_{k}(S(\nu))\;\forall \nu\in \mathcal{N}_S\}
\end{equation*}
with norm
\begin{align*}
\|q\|_P^2=\|q\|_{L^2(\Omega_S)}^2+\sum_{e\in \mathcal{F}_{p,S}}h_e\|q\|_{0,e}^2.
\end{align*}
Norm equivalence yields
\begin{align*}
C\|q\|_P\leq \|q\|_{L^2(\Omega_S)}\leq \|q\|_P \quad \forall q\in P_h.
\end{align*}

Now we are ready to derive the discrete formulation for \eqref{eq:Darcy1}-\eqref{interface-condition}. Multiplying \eqref{eq:Stokes1} by $\bm{v}_S\in [U_{h_S}]^2$, \eqref{eq:Stokes2} by $\bm{\tau}_S\in [\bm{V}_{h_S}]^2$, \eqref{eq:Stokes3} by $q_S\in P_h$ and integration by parts yield
\begin{equation}
\begin{split}
&-(\bm{\sigma}_S,\nabla \bm{v}_S)_{\Omega_S}+\sum_{e\in \mathcal{F}_{p,S}}\langle \bm{\sigma}_S\bm{n},[\bm{v}_S]\rangle_e+\langle \bm{\sigma}_S\bm{n}_{S},\bm{v}_S\rangle_\Gamma+\sum_{e\in \mathcal{F}_{p,S}}\langle p_S,[\bm{v}_S\cdot\bm{n}]\rangle_e\\
&\;+\langle p_S,\bm{v}_S\cdot\bm{n}_{S}\rangle_\Gamma-(p_S,\nabla \cdot \bm{v}_S)_{\Omega_S} =(\bm{f}_S,\bm{v}_S)_{\Omega_S},\\
&\nu^{-1}(\bm{\sigma}_S,\bm{\tau}_S)_{\Omega_S}=-\sum_{e\in \mathcal{F}_{u,S}^0}\langle \bm{u}_S,[\bm{\tau}_S\bm{n}]\rangle_e-\langle \bm{u}_S, \bm{\tau}_S\bm{n}_{S}\rangle_\Gamma+(\bm{u}_S,\nabla \cdot\bm{\tau}_S)_{\Omega_S},\\
&\sum_{e\in \mathcal{F}_{u,S}^0}\langle \bm{u}_S\cdot\bm{n},[q_S]\rangle_e+\langle \bm{u}_S\cdot\bm{n}_{S},q_S\rangle_\Gamma-(\bm{u}_S, \nabla q_S)_{\Omega_S}=0.
\end{split}
\label{eq:weakS}
\end{equation}
Multiplying \eqref{eq:Darcy1} by $q_D\in U_{h_D}$ and \eqref{eq:Darcy2} by $\bm{v}_D\in \bm{V}_{h_D}$, it then follows from integration by parts
\begin{equation}
\begin{split}
&\sum_{e\in \mathcal{F}_{p,D}}\langle \bm{u}_D\cdot\bm{n},[q_D]\rangle_e+\langle \bm{u}_D\cdot\bm{n}_{D},q_D\rangle_\Gamma-(\bm{u}_D,\nabla q_D)_{\Omega_D}=(f_D,q_D)_{\Omega_D},\\
&\frac{\mu}{\rho}(K^{-1}\bm{u}_D,\bm{v}_D)_{\Omega_D}
+\frac{\beta}{\rho}(|\bm{u}_D|\bm{u}_D,\bm{v}_D)_{\Omega_D}+\sum_{e\in \mathcal{F}_{u,D}^0}\langle p_D,[\bm{v}_D\cdot\bm{n}]\rangle_e+\langle p_D,\bm{v}_D\cdot\bm{n}_{D}\rangle_\Gamma\\
&\;-(p_D,\nabla \cdot\bm{v}_D)_{\Omega_D}=(\bm{g}_D,\bm{v}_D)_{\Omega_D}.
\end{split}
\label{eq:weakD}
\end{equation}
Adding the first equation of \eqref{eq:weakS} and \eqref{eq:weakD}, and employing \eqref{eq:relation}, we have
\begin{align*}
&-(\bm{\sigma}_S,\nabla \bm{v}_S)_{\Omega_S}+\sum_{e\in \mathcal{F}_{p,S}}\langle \bm{\sigma}_S\bm{n},[\bm{v}_S]\rangle_e+\langle p_D,\bm{v}_S\cdot\bm{n}_{S}\rangle_\Gamma+G\langle \bm{u}_S\cdot\bm{t},\bm{v}_S\cdot\bm{t}\rangle_\Gamma\\
&\;+\sum_{e\in \mathcal{F}_{p,S}}\langle p_S,[\bm{v}_S\cdot\bm{n}]\rangle_e-(p_S,\nabla\cdot\bm{v}_S)_{\Omega_S}+\sum_{e\in \mathcal{F}_{p,D}}\langle \bm{u}_D\cdot\bm{n},[q_D]\rangle_e+\langle \bm{u}_S\cdot\bm{n}_{D},q_D\rangle_\Gamma\\
&\;-(\bm{u}_D,\nabla q_D)_{\Omega_D} =(\bm{f}_S,\bm{v}_S)_{\Omega_S}+(f_D,q_D)_{\Omega_D}.
\end{align*}
Adding the second equations of \eqref{eq:weakS} and \eqref{eq:weakD}, we can get
\begin{align*}
&\nu^{-1}(\bm{\sigma}_S,\bm{\tau}_S)_{\Omega_S}+\sum_{e\in \mathcal{F}_{u,S}^0}\langle \bm{u}_S,[\bm{\tau}_S\bm{n}]\rangle_e+\langle \bm{u}_S, \bm{\tau}_S\bm{n}_{S}\rangle_\Gamma-(\bm{u}_S,\nabla \cdot \bm{\tau}_S)_{\Omega_S}\\
&\;+\frac{\mu}{\rho}(K^{-1}\bm{u}_D,\bm{v}_D)_{\Omega_D}+
\frac{\beta}{\rho}(|\bm{u}_D|\bm{u}_D,\bm{v}_D)_{\Omega_D}+\sum_{e\in \mathcal{F}_{u,D}^0}\langle p_D,[\bm{v}_D\cdot\bm{n}]\rangle_e\\
&\;+\langle p_D,\bm{v}_D\cdot\bm{n}_{D}\rangle_\Gamma-(p_D,\nabla \cdot\bm{v}_D)_{\Omega_D}=(\bm{g}_D,\bm{v}_D)_{\Omega_D}.
\end{align*}
For the convenience of the presentation, we introduce the nonlinear operator $A:L^3(\Omega_D)^2\rightarrow L^{3/2}(\Omega_D)^2$ defined by
\begin{align}
A(\bm{v})&=\frac{\mu\Red{K^{-1}}}{\rho}\bm{v}+\frac{\beta}{\rho}|\bm{v}|\bm{v} \quad \forall \bm{v}\in L^3(\Omega_D)^2.\label{A}
\end{align}
In addition, we define the following bilinear forms corresponding to the aforementioned derivations
\begin{align*}
a_S(\bm{\tau}_S,\bm{v}_S)&=-\sum_{\tau\in \mathcal{T}_{h_S}}(\bm{\tau}_S,\nabla \bm{v}_S)_{\tau}+\sum_{e\in \mathcal{F}_{p,S}}\langle \bm{\tau}_S\bm{n},[\bm{v}_S]\rangle_e,\\
a_S^*(\bm{v}_S,\bm{\tau}_S)&=-\sum_{e\in \mathcal{F}_{u,S}^0}\langle \bm{v}_S,[\bm{\tau}_S\bm{n}]\rangle_e-\langle \bm{v}_S,\bm{\tau}_S\bm{n}_{S}\rangle_\Gamma+\sum_{\tau\in \mathcal{T}_{h_S}}(\nabla \cdot\bm{\tau}_S,\bm{v}_S)_{\tau},\\
b_S(\bm{v}_S,q_S)&=-\sum_{e\in \mathcal{F}_{u,S}^0}\langle \bm{v}_S\cdot\bm{n},[q_S]\rangle_e-\langle \bm{v}_S\cdot\bm{n}_{S},q_S\rangle_\Gamma+\sum_{\tau\in \mathcal{T}_{h_S}}(\bm{v}_S,\nabla q_S)_{\tau},\\
b_S^*(q_S,\bm{v}_S)&=\sum_{e\in \mathcal{F}_{p,S}}\langle q_S,[\bm{v}_S\cdot\bm{n}]\rangle_e-\sum_{\tau\in \mathcal{T}_{h_S}}(q_S,\nabla \cdot \bm{v}_S)_{\tau},\\
a_D(\bm{v}_D,q_D)&=\sum_{e\in \mathcal{F}_{p,D}}\langle \bm{v}_D\cdot\bm{n},[q_D]\rangle_e-\sum_{\tau\in \mathcal{T}_{h_D}}(\bm{v}_D,\nabla q_D)_{\tau},\\
a_D^*(q_D,\bm{v}_D)&=-\sum_{e\in \mathcal{F}_{u,D}^0}\langle q_D,[\bm{v}_D\cdot\bm{n}]\rangle_e-\langle q_D, \bm{v}_D\cdot\bm{n}_{D}\rangle_{\Gamma}+\sum_{\tau\in \mathcal{T}_{h_D}}(q_D,\nabla \cdot \bm{v}_D)_{\tau}.
\end{align*}

Then, the staggered DG formulation for the coupled Stokes and Darcy-Forchheimer problems reads: find $(\bm{\sigma}_{S,h},\bm{u}_{S,h},p_{S,h})$ $\in [\bm{V}_{h_S}]^2\times [U_{h_S}]^2\times P_h$ and $(\bm{u}_{D,h}, p_{D,h})\in \bm{V}_{h_D}\times U_{h_D}$ such that
\begin{equation}
\begin{split}
&a_S(\bm{\sigma}_{S,h},\bm{v}_S)+b_S^*(p_{S,h},\bm{v}_S)+a_D(\bm{u}_{D,h},q_D)+\langle p_{D,h},\bm{v}_S\cdot\bm{n}_{S} \rangle_\Gamma+G\langle \bm{u}_{S,h}\cdot\bm{t},\bm{v}_S\cdot\bm{t}\rangle_\Gamma\\
&\;-\langle\bm{u}_{S,h}\cdot\bm{n}_{S},q_D\rangle_\Gamma
=(f_D,q_D)_{\Omega_D}+(\bm{f}_S,\bm{v}_S)_{\Omega_S},\\
&\nu^{-1}(\bm{\sigma}_{S,h},\bm{\tau}_S)_{\Omega_S}-a_S^*(\bm{u}_{S,h},\bm{\tau}_S)+
(A(\bm{u}_{D,h}),\bm{v}_D)_{\Omega_D}-a_D^*(p_{D,h},\bm{v}_D)=(\bm{g}_D,\bm{v}_D)_{\Omega_D},\\
&b_S(\bm{u}_{S,h},q_S)=0,\\
& \forall (\bm{\tau}_S,\bm{v}_S,q_S)\in [\bm{V}_{h_S}]^2\times [U_{h_S}]^2\times P_h, \; (\bm{v}_D,q_D)\in  \bm{V}_{h_D}\times U_{h_D}.
\end{split}
\label{eq:discrete1}
\end{equation}

Next, we define the following projection operators, which are defined by employing the special property of staggered DG method.
Let $I_h: H^1(\Omega_i)\rightarrow U_{h_i},i=S,D$ be defined by
\begin{align*}
\langle I_h v-v,\phi\rangle_e &=0 \quad \forall \phi\in P_k(e),\ e\in \mathcal{F}_{u,i},\\
(I_hv-v,\phi)_\tau&=0\quad \forall \phi\in P_{k-1}(\tau),\ \tau\in \mathcal{T}_{h_i}.
\end{align*}
In addition, $J_h: H^1(\Omega_i)\rightarrow \bm{V}_{h_i}$ is defined by
\begin{align*}
\langle(J_h\bm{q}-\bm{q})\cdot\bm{n},v\rangle_e&=0\quad \forall v\in P_{k}(e),\ \forall e\in \mathcal{F}_{p,i},\\
(J_h\bm{q}-\bm{q}, \bm{\phi})_\tau&=0\quad \forall \bm{\phi}\in P_{k-1}(\tau)^2,\ \forall \tau\in \mathcal{T}_{h_i}.
\end{align*}
Straightforwardly, we have
\begin{equation}
\begin{split}
a_D(\bm{u}_D-J_h\bm{u}_D, q_D)&=0  \quad \forall q_D\in U_{h_D},\\
a_S(\bm{\sigma}_S-J_h\bm{\sigma}_S,\bm{v}_S)&=0\quad \forall \bm{v}_S\in [U_{h_S}]^2.
\end{split}
\label{eq:interpolationI}
\end{equation}
In addition, we can obtain the following error estimates for $i=S,D$ by following \cite{Ciarlet78}
\begin{equation}
\begin{split}
\|\psi-I_h \psi\|_{L^2(\Omega_i)}&\leq C h^{k+1}\|\psi\|_{H^\alpha(\Omega_i)}\hspace{0.5cm} \forall \psi\in H^{k+1}(\Omega_i),\\
\|I_h\psi\|_{L^{3/2}(\Omega_D)}&\leq C \|\psi\|_{L^{3/2}(\Omega_D)}\hspace{1cm} \forall \psi\in L^{3/2}(\Omega_D),\\
\|\bm{\phi}-I_h\bm{\phi}\|_{h}&\leq C h^{k}\|\bm{\phi}\|_{H^{k+1}(\Omega_S)}\hspace{0.5cm} \forall \bm{\phi}\in H^{k+1}(\Omega_S)^2,\\
\|\bm{z}-J_h\bm{z}\|_{L^2(\Omega_i)}&\leq Ch^{k+1} \|\bm{z}\|_{H^{k+1}(\Omega_i)}\hspace{0.4cm} \forall \bm{z}\in H^{k+1}(\Omega_i)^2,\\
\|J_h\bm{z}\|_{L^3(\Omega_D)}&\leq C \|\bm{z}\|_{L^3(\Omega_D)}\hspace{1.4cm} \forall \bm{z}\in L^3(\Omega_D)^2.
\end{split}
\label{eq:interpolation}
\end{equation}

Here, we briefly derive the error estimates for $\|\cdot\|_{Z_D}$. By virtue of the trace inequality (see, e.g., \cite{BrennerScott08}) and Young's inequality, we can obtain
\begin{align*}
\|v\|_{L^{3/2}(\partial \tau)}^{3/2}\leq C \Big(\|v\|_{L^{3/2}(\tau)}^{1/3}\|v\|_{W^{1,3/2}(\tau)}^{2/3}\Big)^{3/2}\leq C \Big(h_\tau^{1/2}\|w\|_{W^{1,3/2}(\tau)}^{3/2}+h_\tau^{-1}\|w\|_{L^{3/2}(\tau)}^{3/2}\Big).
\end{align*}
It then follows from the fact that $I_h$ is the polynomial preserving operator (cf. \cite{Ciarlet78})
\begin{align}
\|v-I_hv\|_{Z_D}\leq  C h^{\alpha-1}\|v\|_{W^{\alpha,3/2}(\Omega_D)}\quad \forall v\in W^{\alpha,3/2}(\Omega_D)\label{eq:IhD}.
\end{align}

Next, we define $\pi_h$ to be the standard conforming finite element interpolation operator for the triangulation $\mathcal{T}_{h_S}$. Then we can obtain (cf. \cite{Ciarlet78})
\begin{align}
\|q-\pi_h q\|_{L^2(\Omega_S)}\leq C h^{k+1} \|q\|_{H^{k+1}(\Omega_S)}\; \mbox{and}\; \|q-\pi_hq\|_{L^2(e)}\leq C h^{k+1/2}\|q\|_{H^{k+1}(\tau)}, \label{eq:pih}
\end{align}
where $e\subset \partial \tau$ and $\tau\in \mathcal{T}_{h_S}$.

Given any function $\zeta\in L^2(e)\; \forall e\in \mathcal{F}_D$, the restriction of $P_\partial \zeta$ to each edge $e$ is defined as the element of $P_k(e)$ that satisfies
\begin{align*}
\langle P_\partial \zeta-\zeta,\phi\rangle_e=0\quad \forall \phi\in P_k(e), e\in \mathcal{F}_D,
\end{align*}
which by definition gives for any $p> 1$
\begin{align}
\|P_\partial \zeta\|_{L^p(e)}\leq C \|\zeta\|_{L^p(e)}.\label{eq:pi}
\end{align}

The next two lemmas prove the inf-sup conditions for the bilinear forms $a_D(\cdot,\cdot)$ and $a_D^*(\cdot,\cdot)$. We note that the proof is motivated by the methodology exploited in \cite{ChungWave09}, while some critical modifications are needed to fit into the current framework.
\begin{lemma}\label{lemma:infsupa}
(inf-sup condition).
There exists a uniform constant $C>0$ such that
\begin{align*}
\inf_{\bm{v}\in \bm{V}_{h_D}}\sup_{q\in U_{h_D}}\frac{a_D^*(q,\bm{v})}{\|q\|_{X_D}\|\bm{v}\|_{Z_D'}}\geq C.
\end{align*}

\end{lemma}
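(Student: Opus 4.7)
To establish the inf-sup, I would fix an arbitrary $\bm{v}_D\in\bm{V}_{h_D}$ and construct an explicit tester $q_D\in U_{h_D}$ whose pairing with $\bm{v}_D$ through $a_D^*$ reproduces the cube of $\|\bm{v}_D\|_{Z_D'}$, while its $X_D$-norm is controlled by the square of $\|\bm{v}_D\|_{Z_D'}$. Taking the ratio then yields the claimed lower bound. The choice is dictated by the conjugacy of $3$ and $3/2$: each term $\int|f|^3$ appearing in $\|\bm{v}_D\|_{Z_D'}^3$ is recovered by testing $f$ against a scaled version of $|f|\,f$, whose $L^{3/2}$-norm equals $\|f\|_{L^3}^2$. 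The mesh factors $h_e^{-2}$ present in the $Z_D'$-norm dictate the $h_e^{-2}$ weighting placed on the edge values of $q_D$, which in turn produces the correct $h_e$-weight in the $X_D$-norm of $q_D$.

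\textbf{Construction.} I would prescribe $q_D\in U_{h_D}$ via its degrees of freedom. For every $e\in\mathcal{F}_{u,D}^0$, set $q_D|_e := -h_e^{-2}P_\partial\bigl(|[\bm{v}_D\cdot\bm{n}]|\,[\bm{v}_D\cdot\bm{n}]\bigr)\in P_k(e)$; for every $e\subset\Gamma$, set $q_D|_e := -h_e^{-2}P_\partial\bigl(|\bm{v}_D\cdot\bm{n}_D|\,(\bm{v}_D\cdot\bm{n}_D)\bigr)$, while $q_D$ vanishes on $\Gamma_D$ by definition of $U_{h_D}$. On each $\tau\in\mathcal{T}_{h_D}$ fix the interior moments by $(q_D,p_{k-1})_\tau := \int_\tau|\nabla\cdot\bm{v}_D|(\nabla\cdot\bm{v}_D)\,p_{k-1}\,dx$ for all $p_{k-1}\in P_{k-1}(\tau)$. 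Since each triangle of $\mathcal{T}_{h_D}$ carries exactly one edge in $\mathcal{F}_{u,D}$, these DOFs are complementary and uniquely determine $q_D$.

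\textbf{Two estimates.} Using $[\bm{v}_D\cdot\bm{n}]\in P_k(e)$, $\nabla\cdot\bm{v}_D\in P_{k-1}(\tau)$, and the defining property of $P_\partial$, a direct substitution gives $a_D^*(q_D,\bm{v}_D)=\|\bm{v}_D\|_{Z_D'}^3$. For the denominator, the $L^{3/2}$-stability \eqref{eq:pi} of $P_\partial$ combined with the identity $\||f|f\|_{L^{3/2}(e)}=\|f\|_{L^3(e)}^2$ yields $h_e\|q_D\|_{L^{3/2}(e)}^{3/2}\leq Ch_e^{-2}\|[\bm{v}_D\cdot\bm{n}]\|_{L^3(e)}^3$ on interior edges, and the analogous bound on $\Gamma$. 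For the volume part, a pullback to the reference triangle plus equivalence of norms on the finite-dimensional space $P_k(\hat\tau)$ expresses $\|q_D\|_{L^{3/2}(\tau)}^{3/2}$ in terms of the prescribed DOFs, giving $\|q_D\|_{L^{3/2}(\tau)}^{3/2}\leq C\bigl(\|\nabla\cdot\bm{v}_D\|_{L^3(\tau)}^3+\sum_{e\in\mathcal{F}_{u,D}\cap\partial\tau}h_e^{-2}\|[\bm{v}_D\cdot\bm{n}]\|_{L^3(e)}^3\bigr)$. Summing over $\tau$ and the edges yields $\|q_D\|_{X_D}^{3/2}\leq C\|\bm{v}_D\|_{Z_D'}^3$, hence $\|q_D\|_{X_D}\leq C\|\bm{v}_D\|_{Z_D'}^2$; dividing the two bounds proves the lemma.

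\textbf{Anticipated difficulty.} The delicate step is the volume bound on $\|q_D\|_{L^{3/2}(\tau)}$. Unlike the Hilbert setting, $L^{3/2}$ does not come with an orthogonal projection, so one must recover $q_D|_\tau$ from its interior moments (paired with $|\nabla\cdot\bm{v}_D|\,\nabla\cdot\bm{v}_D$) and its single $\mathcal{F}_{u,D}$-edge trace (weighted by $h_e^{-2}$) via norm equivalence on $P_k(\hat\tau)$, with careful bookkeeping of $h_\tau$ and $h_e$ scaling factors. The one-$\mathcal{F}_{u,D}$-edge-per-triangle structure of the staggered mesh substantially simplifies this accounting.
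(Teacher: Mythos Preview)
Your construction is correct and follows the same overall strategy as the paper: build an explicit tester whose edge degrees of freedom are proportional to $h_e^{-2}P_\partial(|[\bm{v}_D\cdot\bm{n}]|\,[\bm{v}_D\cdot\bm{n}])$ and whose bulk part encodes $|\nabla\cdot\bm{v}_D|\,\nabla\cdot\bm{v}_D$, so that the pairing reproduces the $Z_D'$-norm cubed while the $X_D$-norm is bounded by its square. The edge piece of your $q_D$ is literally the paper's $q_2$.

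The only genuine difference lies in the treatment of the divergence contribution. The paper does not prescribe interior moments directly; instead it sets $q_1^*|_\tau=\lambda_{\tau,1}\,|\nabla\cdot\bm{v}|\,\nabla\cdot\bm{v}$, with $\lambda_{\tau,1}$ the linear bubble vanishing on the primal edge, and then takes $q_1=I_h q_1^*$. The bubble forces $q_1|_e=0$ on $\mathcal{F}_{u,D}^0\cup\Gamma$ and lets the $L^{3/2}$-stability of $I_h$ immediately deliver $\|q_1\|_{L^{3/2}(\Omega_D)}^{3/2}\leq C\sum_\tau\|\nabla\cdot\bm{v}\|_{L^3(\tau)}^3$, at the price of only an inequality $a_D^*(q_1,\bm{v})\geq C\sum_\tau\|\nabla\cdot\bm{v}\|_{L^3(\tau)}^3$. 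Your route prescribes the $P_{k-1}$-moments directly, which buys the exact identity $a_D^*(q_D,\bm{v}_D)=\|\bm{v}_D\|_{Z_D'}^3$, but then the volume bound $\|q_D\|_{L^{3/2}(\tau)}^{3/2}\leq C(\|\nabla\cdot\bm{v}_D\|_{L^3(\tau)}^3+\cdots)$ must be argued by pullback and finite-dimensional norm equivalence, exactly the ``anticipated difficulty'' you flag. That argument is standard and goes through as you sketch it, so the two proofs are equivalent in substance and differ only in where the norm-equivalence step is invoked.
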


\begin{proof}
Let $\bm{v}\in \bm{V}_{h_D}$. It suffices to find $q\in U_{h_D}$ such that
\begin{align*}
a_D^*(q,\bm{v})\geq C \|\bm{v}\|_{Z_D'}^{3}\quad  \mbox{and}\quad  \|q\|_{X_D}^{3/2}\leq C\|\bm{v}\|_{Z_D'}^3.
\end{align*}

First, we will find $q_1$ such that $q_1\mid_e=0$ for each $e\in \mathcal{F}_{u,D}^0\cup \Gamma$ and the following hold
\begin{align}
a_D^*(q_1,\bm{v})\geq C \sum_{\tau\in \mathcal{T}_{h_D}}\int_{\tau} |\nabla \cdot \bm{v}|^3\;dx \quad \mbox{and}\quad \int_{\Omega_D} |q_1|^{3/2}\;dx\leq C\sum_{\tau\in \mathcal{T}_{h_D}}\int_{\tau} |\nabla \cdot \bm{v}|^3\;dx.\label{eq:infaD}
\end{align}
The second inequality in \eqref{eq:infaD} implies that $\|q_1\|_{X_D}^{3/2}\leq C \|\bm{v}\|_{Z_D'}^3$.

Note that for each $\tau\in \mathcal{T}_h$, there is exactly one edge $e\subset \partial \tau$ that belongs to $\mathcal{F}_{u,D}^0\cup \Gamma$, thus, we define $q_1^*$ by
\begin{align*}
q_1^*\mid_\tau=\lambda_{\tau,1}\nabla \cdot \bm{v}|\nabla \cdot \bm{v}|,
\end{align*}
where $\lambda_{\tau,1}$ is the unique linear function defined on $\tau$ such that $\lambda_{\tau,1}=0$ at the two vertices of $e$ and $\lambda_{\tau,1}=1$ at the other vertex of $\tau$. Then let $q_1=I_h q_1^*$, we have $q_1\in U_{h_D}$. Since $\lambda_{\tau,1}\leq 1$, it then follows
\begin{align*}
\int_{\Omega_D} |q_1|^{3/2}\;dx\leq C\int_{\Omega_D} |q_1^*|^{3/2}\;dx \leq C \sum_{\tau\in \mathcal{T}_{h_D}}\int_{\tau} |\nabla \cdot\bm{v}|^3\;dx,
\end{align*}
which proves the second inequality in \eqref{eq:infaD}. Since $\lambda_{\tau,1}=0$ on $e\in \mathcal{F}_{u,D}^0\cup \Gamma$, we have
\begin{align*}
\sum_{e\in \mathcal{F}_{u,D}^0}\int_e q_1[\bm{v}\cdot\bm{n}]\;ds+\langle q_1, \bm{v}\cdot\bm{n}\rangle_\Gamma=0.
\end{align*}

By using the norm equivalence, we can get
\begin{align*}
\sum_{\tau\in \mathcal{T}_{h_D}}\int_{\tau} q_1\nabla \cdot \bm{v}\;dx&=\sum_{\tau\in \mathcal{T}_{h_D}}\int_{\tau} q_1^*\nabla \cdot \bm{v}\;dx=\sum_{\tau\in \mathcal{T}_{h_D}}\int_\tau\lambda_{\tau,1}(\nabla \cdot \bm{v})^2|\nabla \cdot \bm{v}|\;dx\\
&\geq C \sum_{\tau\in \mathcal{T}_{h_D}}\int_{\tau} |\nabla \cdot \bm{v}|^3\;dx.
\end{align*}
Therefore, the first inequality in \eqref{eq:infaD} is proved.

Secondly, we will find $q_2$ as follows. Let $U_h^0$ be the subset of $U_{h_D}$ defined by
\begin{align*}
U_h^0=\{q\in U_{h_D}| \int_\tau qv_{k-1}\;dx=0,\forall v_{k-1}\in P_{k-1}(\tau), \forall \tau\in \mathcal{T}_{h_D}\}.
\end{align*}
Then we define $q_2\in U_h^0$ such that all degrees of freedom associated with (UD1) are given by
\begin{align}
\phi_e(q_2)=\int_e q_2p_e\;ds=-h_e^{-2}\int_e \frac{|[\bm{v}\cdot\bm{n}]|^3}{[\bm{v}\cdot\bm{n}]}p_e\;ds\quad \forall p_e\in P_k(e)\label{eq:defphi}
\end{align}
for all $e\in \mathcal{F}_u^0\cup \Gamma$. For each $e\in \mathcal{F}_u^0\cup \Gamma$, taking $p_e=[\bm{v}\cdot\bm{n}]$ in \eqref{eq:defphi}, we can get
\begin{align*}
\int_e q_2[\bm{v}\cdot\bm{n}]\;ds=-h_e^{-2}\int_e|[\bm{v}\cdot\bm{n}]|^3\;ds.
\end{align*}
It then follows
\begin{equation}
\begin{split}
a_D^*(q_2,\bm{v})&=-\sum_{e\in \mathcal{F}_{u,D}^0}(q_2,[\bm{v}\cdot\bm{n}])_e-\langle q_2,\bm{v}\cdot\bm{n}\rangle_\Gamma\\
&=\sum_{e\in \mathcal{F}_{u,D}^0}h_e^{-2}\int_e |[\bm{v}\cdot\bm{n}]|^3\;ds+\int_\Gamma h_e^{-2}|\bm{v}\cdot\bm{n}|^3\;ds.
\end{split}
\label{eq:aDs1}
\end{equation}

Taking $p_e=P_\partial(\frac{|q_2|^{3/2}}{q_2})$ in \eqref{eq:defphi}, we have from \eqref{eq:pi}
\begin{align*}
\int_e q_2p_e\;ds&=\int_e q_2(\frac{|q_2|^{3/2}}{q_2})\;ds=\int_e |q_2|^{3/2}\;ds=-h_e^{-2}\int_e \frac{|[\bm{v}\cdot\bm{n}]|^3}{[\bm{v}\cdot\bm{n}]}P_{\partial}(\frac{|q_2|^{3/2}}{q_2})\;ds\\
&\leq C h_e^{-2}(\int_e |[\bm{v}\cdot\bm{n}]|^3\;ds)^{2/3}(\int_e |q_2|^{3/2}\;ds)^{1/3}.
\end{align*}
Therefore
\begin{align*}
\int_e |q_2|^{3/2}\;ds\leq C h_e^{-3}\int_e |[\bm{v}\cdot\bm{n}]|^3\;ds.
\end{align*}
Besides, norm equivalence yields
\begin{align*}
\int_\tau |q_2|^{3/2}\;dx\leq C h_e\int_e |q_2|^{3/2}\;ds\quad \forall e\in \partial \tau\cap (\mathcal{F}_{u,D}^0\cup \Gamma).
\end{align*}
Therefore, we can get
\begin{align*}
\|q_2\|_{X_D}^{3/2}&=\int_{\Omega_D} |q_2|^{3/2}\;dx+\sum_{e\in \mathcal{F}_{u,D}^0\cup \Gamma}h_e \int_e | q_2|^{3/2}\;ds\\
&\leq C\Big(\sum_{e\in \mathcal{F}_{u,D}^0}h_e^{-2}\int_e |[\bm{v}\cdot\bm{n}]|^{3}\;ds+\sum_{e\in \Gamma}h_e^{-2}\int_e |\bm{v}\cdot\bm{n}|^{3}\;ds\Big).
\end{align*}
Let $q=q_1+q_2$. Then
\begin{align*}
\|q\|_{X_D}^{3/2}\leq  \|q_1\|_{X_D}^{3/2}+\|q_2\|_{X_D}^{3/2}\leq C \|\bm{v}\|_{Z_D'}^3.
\end{align*}
We have from \eqref{eq:infaD} and \eqref{eq:aDs1}
\begin{align*}
a_D^*(q,\bm{v})=a_D^*(q_1,\bm{v})+a_D^*(q_2,\bm{v})\geq C \|\bm{v}\|_{Z_D'}^3.
\end{align*}

\end{proof}

\begin{lemma}\label{lemma:insup3}
(inf-sup condition).
There exists a uniform constant $C>0$ such that
\begin{align*}
\inf_{q\in U_{h_D}}\sup_{\bm{v}\in \bm{V}_{h_D}}\frac{a_D(\bm{v},q)}
{\|q\|_{Z_D}\|\bm{v}\|_{X_D'}}
\geq C.
\end{align*}

\end{lemma}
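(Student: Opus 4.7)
The strategy is to mirror the construction of Lemma 2.1 with the roles of the test and trial spaces swapped: given $q\in U_{h_D}$, I will construct a test field $\bm v = \bm v_1 + \bm v_2 \in \bm V_{h_D}$ so that $\bm v_1$ picks up the elementwise $\|\nabla q\|$ contribution to $\|q\|_{Z_D}^{3/2}$ and $\bm v_2$ picks up the jump contribution $h_e^{-1/2}\int_e|[q]|^{3/2}$, then verify that $a_D(\bm v,q)\gtrsim \|q\|_{Z_D}^{3/2}$ while $\|\bm v\|_{X_D'}^3\lesssim\|q\|_{Z_D}^{3/2}$. Since $\|\bm v\|_{X_D'}\lesssim \|q\|_{Z_D}^{1/2}$, dividing yields the required lower bound by a uniform constant.

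For $\bm v_1$, I will prescribe the degrees of freedom (VD1)--(VD2) by setting all edge dofs on $\mathcal{F}_{p,D}$ equal to zero and defining the interior dof by the linear functional
\[
(\bm v_1,\bm p_{k-1})_\tau \;=\; -\int_\tau |\nabla q|^{-1/2}\,\nabla q\cdot \bm p_{k-1}\,dx,
\qquad \forall\, \bm p_{k-1}\in [P_{k-1}(\tau)]^2,
\]
with the usual convention $|0|^{-1/2}0=0$. Since $\nabla q\in[P_{k-1}(\tau)]^2$, testing with $\bm p_{k-1}=\nabla q$ gives $-(\bm v_1,\nabla q)_\tau=\int_\tau|\nabla q|^{3/2}\,dx$. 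The vanishing of the edge dofs immediately gives $\bm v_1\cdot\bm n=0$ on every $e\in\mathcal{F}_{p,D}$, so $\bm v_1\in\bm V_{h_D}$ and the edge-jump term in $\|\bm v_1\|_{X_D'}$ vanishes. To control $\|\bm v_1\|_{L^3(\tau)}$, I will use norm equivalence on the finite-dimensional space $[P_k(\tau)]^2$ (together with an affine scaling to a reference element) to bound the $L^3$ norm of a polynomial by its defining dofs, and then apply H\"{o}lder's inequality to the functional above with conjugate exponents $3$ and $3/2$, obtaining $\|\bm v_1\|_{L^3(\tau)}^3 \leq C\int_\tau|\nabla q|^{3/2}\,dx$.

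For $\bm v_2$, I prescribe all interior dofs to vanish and set the edge dofs on each $e\in\mathcal{F}_{p,D}$ by
\[
\langle \bm v_2\cdot\bm n,\,p_k\rangle_e \;=\; h_e^{-1/2}\!\int_e \frac{|[q]|^{3/2}}{[q]}\,p_k\,ds,\qquad \forall\, p_k\in P_k(e).
\]
Testing with $p_k=[q]\in P_k(e)$ gives $\langle\bm v_2\cdot\bm n,[q]\rangle_e = h_e^{-1/2}\int_e|[q]|^{3/2}\,ds$. Because the interior dofs vanish and $\nabla q\in[P_{k-1}(\tau)]^2$, we also have $(\bm v_2,\nabla q)_\tau=0$, so $a_D(\bm v_2,q)$ equals precisely the jump part of $\|q\|_{Z_D}^{3/2}$. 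The $L^3(e)$ bound on $\bm v_2\cdot\bm n$ is obtained by taking $p_k=P_\partial(|\bm v_2\cdot\bm n|\,\bm v_2\cdot\bm n)$ in the dof formula and applying H\"{o}lder together with the projection stability~\eqref{eq:pi}; this yields $\int_e|\bm v_2\cdot\bm n|^3\,ds\leq C h_e^{-3/2}\int_e|[q]|^{3/2}\,ds$, so $h_e\int_e|\bm v_2\cdot\bm n|^3\,ds\leq Ch_e^{-1/2}\int_e|[q]|^{3/2}\,ds$. A scaling/norm-equivalence argument on each $\tau$ transfers the edge-dof estimate into the volume estimate $\|\bm v_2\|_{L^3(\tau)}^3 \leq C h_\tau\sum_{e\subset\partial\tau\cap\mathcal{F}_{p,D}}\int_e|\bm v_2\cdot\bm n|^3\,ds$.

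Adding the two constructions, $a_D(\bm v,q)=\sum_\tau\int_\tau|\nabla q|^{3/2}\,dx+\sum_{e\in\mathcal{F}_{p,D}}h_e^{-1/2}\int_e|[q]|^{3/2}\,ds=\|q\|_{Z_D}^{3/2}$, and the bounds above give $\|\bm v\|_{X_D'}^3\leq C\|q\|_{Z_D}^{3/2}$, concluding the inf-sup inequality. The only delicate point I expect is getting the powers of $h_e$ consistent in the $L^3$-based edge estimates for $\bm v_2$ --- this is where the non-Hilbert nature of the norms $X_D'$ and $Z_D$ forces a H\"{o}lder argument with conjugate pair $(3,3/2)$ combined with the stability~\eqref{eq:pi} of $P_\partial$, and the scaling needed to convert an edge-norm estimate into a triangle-norm estimate via finite-dimensional norm equivalence on the reference element.
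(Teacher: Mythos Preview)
Your proposal is correct and follows the same overall strategy as the paper: split $\bm v=\bm v_1+\bm v_2$ with $\bm v_1$ capturing the elementwise gradient contribution and $\bm v_2$ the jump contribution, the latter being identical to the paper's construction. The only difference is in how $\bm v_1$ is built: the paper constructs an explicit (non-polynomial) field $\bm v_1^*$ on each $\tau$ using barycentric weights $\lambda_{\tau,i}$ along the two $\mathcal F_{p,D}$-normals and then sets $\bm v_1=J_h\bm v_1^*$, whereas you prescribe the degrees of freedom of $\bm v_1$ directly (zero edge dofs, interior dofs dual to $|\nabla q|^{-1/2}\nabla q$) and invoke scaled norm equivalence on the subspace with vanishing normal trace. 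Your route is slightly cleaner in that it yields $a_D(\bm v,q)=\|q\|_{Z_D}^{3/2}$ exactly rather than up to a constant, and avoids the intermediate interpolation step; the paper's explicit ansatz has the advantage of being entirely constructive without appealing to a dual-norm equivalence on a reference element.
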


\begin{proof}

Let $q\in U_{h_D}$. It suffices to find $\bm{v}\in \bm{V}_{h_D}$ such that
\begin{align*}
a_D(\bm{v},q)\geq C \|q\|_{Z_D}^{3/2} \quad \mbox{and}\quad \|\bm{v}\|_{X_D'}^3\leq C \|q\|_{Z_D}^{3/2}.
\end{align*}
We first find $\bm{v}_1\in \bm{V}_{h_D}$ such that $\bm{v}_1\cdot\bm{n}=0$ for all $e\in \mathcal{F}_{p,D}$ and
\begin{align}
a_D(\bm{v}_1,q)\geq C  \sum_{\tau\in \mathcal{T}_{h_D}}\int_{\tau}|\nabla q|^{3/2}\;dx\quad \mbox{and} \quad \int_{\Omega_D}|\bm{v}_1|^3\;dx\leq C\sum_{\tau\in \mathcal{T}_{h_D}} \int_{\tau}|\nabla q|^{3/2}\;dx.\label{eq:aD}
\end{align}
In particular, the second inequality in \eqref{eq:aD} implies that $\|\bm{v}_1\|_{X_D'}^3\leq C \|q\|_{Z_D}^{3/2}$. Let $\tau\in \mathcal{T}_{h_D}$. Then, on the boundary $\partial \tau$, there are exactly two edges $e_i\in \mathcal{F}_{p,D},i=1,2$ with the corresponding unit normal vectors $\bm{n}^{(i)}$, we take
\begin{align*}
\bm{v}_1^*\cdot\bm{n}^{(1)}=-\lambda_{\tau,1}\frac{q_{k-1}^{(1)}}{|q_{k-1}^{(1)}|^{1/2}},\quad
\bm{v}_1^*\cdot\bm{n}^{(2)}=-\lambda_{\tau,2}\frac{q_{k-1}^{(2)}}{|q_{k-1}^{(2)}|^{1/2}},
\end{align*}
where $q_{k-1}^{(1)}$ and $q_{k-1}^{(2)}$ will be determined later. In the above definition $\lambda_{\tau,i},i=1,2$ is the unique linear function defined on $\tau$ such that $\lambda_{\tau,i}=0$ at the two vertices of $e_i$ and $\lambda_{\tau,i}=1$ at the remaining vertex of $\tau$. By this definition, we have $\bm{v}_1^*\cdot\bm{n}=0$ for all $e\in \mathcal{F}_{p,D}$. Let $\mathcal{A}$ be the matrix such that the rows are given by the two vectors $\bm{n}^{(1)}$ and $\bm{n}^{(2)}$. Then clearly $\mathcal{A}$ is invertible. Moreover
\begin{align*}\mathcal{A}\bm{v}_1^*=
\left(
  \begin{array}{c}
    -\lambda_{\tau,1} \frac{q_{k-1}^{(1)}}{|q_{k-1}^{(1)}|^{1/2}}\\
    -\lambda_{\tau,2} \frac{q_{k-1}^{(2)}}{|q_{k-1}^{(2)}|^{1/2}} \\
  \end{array}
\right),
\quad \bm{v}_1^*=-\mathcal{A}^{-1}
\left(
  \begin{array}{c}
   \lambda_{\tau,1}\frac{q_{k-1}^{(1)}}{|q_{k-1}^{(1)}|^{1/2}}\\
    \lambda_{\tau,2} \frac{q_{k-1}^{(2)}}{|q_{k-1}^{(2)}|^{1/2}}\\
  \end{array}
\right).
\end{align*}
Now we define $q_{k-1}^{(1)}$ and $q_{k-1}^{(2)}$ as the two functions such that
\begin{align*}
\mathcal{A}\nabla q =\left(
                      \begin{array}{c}
                        q_{k-1}^{(1)} \\
                        q_{k-1}^{(2)} \\
                      \end{array}
                    \right).
\end{align*}
Let $\bm{v}_1=J_h\bm{v}_1^*$, it then follows
\begin{align*}
-\int_\tau \bm{v}_1\cdot\nabla q\;dx=-\int_\tau \bm{v}_1^*\cdot\nabla q\;dx \geq C \int_\tau |\nabla q|^{3/2}\;dx.
\end{align*}
Summing over all $\tau$ and using the fact that $\langle \bm{v}_1\cdot\bm{n},[q]\rangle_e=0$ for all $e\in \mathcal{F}_{p,D}$ we obtain the first inequality in \eqref{eq:aD}. For the second inequality of \eqref{eq:aD}, we have
\begin{align*}
\int_{\Omega_D} |\bm{v}_1|^3\;dx \leq C \sum_{\tau\in \mathcal{T}_{h_D}}\int_\tau \sum_{i=1}^2 |\bm{v}_1^*\cdot\bm{n}^{(i)}|^3\;dx\leq C \sum_{\tau\in \mathcal{T}_{h_D}}\int_\tau \sum_{i=1}^2|q_{k-1}^{(i)}|^{3/2}\;dx\leq C\sum_{\tau\in \mathcal{T}_{h_D}} \int_{\tau} |\nabla q|^{3/2}\;dx.
\end{align*}
Secondly, we will find $\bm{v}_2$ as follows. Let $\bm{V}_h^0$ be the subset of $\bm{V}_{h_D}$ defined by
\begin{align*}
\bm{V}_h^0=\{\bm{v}\in \bm{V}_{h_D}| \int_\tau \bm{v}\cdot \bm{p}_{k-1}\;dx=0\quad \forall \bm{p}_{k-1}\in P_{k-1}(\tau)^2,\;\forall \tau\in \mathcal{T}_{h_D}\}.
\end{align*}
We define $\bm{v}_2\in \bm{V}_h^0$ such that all degrees of freedom associated with (VD1) are given by
\begin{align}
\psi_e(\bm{v}_2)=\int_e \bm{v}_2\cdot\bm{n} p_e\;ds=h_e^{-1/2}\int_e\frac{|[q]|^{3/2}}{[q]}p_e\;ds\quad \forall p_e\in P_k(e)\label{eq:defpsi}
\end{align}
for all $e\in \mathcal{F}_{p,D}$. We take $p_e = [q]$ in \eqref{eq:defpsi} to get
\begin{align*}
\int_e \bm{v}_2\cdot\bm{n}[q]\;ds=h_e^{-1/2}\int_e|[q]|^{3/2}\;ds.
\end{align*}
The definition of $a_D$ yields
\begin{align}
a_D(\bm{v}_2,q)=\sum_{e\in \mathcal{F}_{p,D}} h_e^{-1/2}\int_e|[q]|^{3/2}\;ds.\label{eq:aD2}
\end{align}
Taking $p_e=P_\partial(\bm{v}_2\cdot\bm{n}|\bm{v}_2\cdot\bm{n}|)$ in \eqref{eq:defpsi} and using \eqref{eq:pi}
\begin{align*}
\int_e\bm{v}_2\cdot\bm{n}p_e\;ds&=
\int_e\bm{v}_2\cdot\bm{n}(\bm{v}_2\cdot\bm{n}|\bm{v}_2\cdot\bm{n}|)\;ds=\int_e |\bm{v}_2\cdot\bm{n}|^3\;ds=
h_e^{-1/2}\int_e\frac{|[q]|^{3/2}}{[q]}P_{\partial}(\bm{v}_2\cdot\bm{n}|\bm{v}_2\cdot\bm{n}|)\;ds\\
&\leq h_e^{-1/2}(\int_e|\bm{v}_2\cdot\bm{n}|^{3}\;ds)^{2/3}(\int_e|[q]|^{3/2}\;ds)^{1/3}.
\end{align*}
Therefore, we can get
\begin{align*}
\int_e |\bm{v}_2\cdot\bm{n}|^3\;ds\leq C h_e^{-3/2}\int_e |[q]|^{3/2}\;ds.
\end{align*}

On the other hand, norm equivalence yields
\begin{align*}
\int_\tau |\bm{v}_2|^3\;dx\leq C \sum_{i=1}^2 h_{e_i} \int_{e_i} |\bm{v}_2\cdot\bm{n}|^3ds,
\end{align*}
where $e_i,i=1,2$ are the two edges on $\partial \tau$ that belong to $\mathcal{F}_{p,D}$.

Therefore
\begin{align*}
\|\bm{v}_2\|_{X_D'}^3=\int_{\Omega_D}|\bm{v}_2|^3\;dx+\sum_{e\in \mathcal{F}_{p,D}}h_e\int_e |\bm{v}_2\cdot\bm{n}|^3\;ds\leq C \sum_{e\in \mathcal{F}_{p,D}}h_e^{-1/2}\int_e|[q]|^{3/2}\;ds.
\end{align*}
Finally, we define $\bm{v}=\bm{v}_1+\bm{v}_2$, then we can get
\begin{align*}
\|\bm{v}\|_{X_D'}^3\leq \|\bm{v}_1\|_{X_D'}^3+\|\bm{v}_2\|_{X_D'}^3\leq C \|q\|_{Z_D}^{3/2}.
\end{align*}
By \eqref{eq:aD} and \eqref{eq:aD2}, we have
\begin{align*}
a_D(\bm{v},q)=a_D(\bm{v}_1,q)+a_D(\bm{v}_2,q)\geq C \|q\|_{Z_D}^{3/2}.
\end{align*}

\end{proof}

The following discrete inf-sup conditions are already proved in \cite{ChungWave09,KimChungLee}, and we save them for later use.
\begin{align}
\inf_{q\in P_h}\sup_{\bm{v}\in [U_{h_S}]^2}\frac{b_S(\bm{v},q)}
{\|\bm{v}\|_h\|q\|_{L^2(\Omega_S)}}&\geq C\label{inf-sup-bh},\\
\inf_{\bm{\tau}_S\in[\bm{V}_{h_S}]^2}\sup_{\bm{v}_S\in [U_{h_S}]^2} \frac{a_S^*(\bm{v}_S,\bm{\tau}_S)}{\|\bm{\tau}_S\|_{Z_S'}\|\bm{v}_S\|_{L^2(\Omega_S)}}&\geq C,\label{eq:inf-supaS}\\
\inf_{\bm{v}_S\in [U_{h_S}]^2}\sup_{\bm{\tau}_S\in[\bm{V}_{h_S}]^2} \frac{a_S(\bm{\tau}_S,\bm{v}_S)}{\|\bm{\tau}_S\|_{X_S'}\|\bm{v}_S\|_{h}}&\geq C.\label{eq:inf-supaS2}
\end{align}
In addition, integration by parts reveals the following adjoint properties
\begin{equation}
\begin{split}
a_D^*(q_D,\bm{v}_D)&=a_D(\bm{v}_D,q_D)\quad \forall (q_D,\bm{v}_D)\in U_{h_D}\times \bm{V}_{h_D},\\
a_S^*(\bm{v}_S,\bm{\tau}_S)&=a_S(\bm{\tau}_S,\bm{v}_S)\quad \forall (\bm{v}_S,\bm{\tau}_S)\in [U_{h_S}]^2\times [\bm{V}_{h_S}]^2,\\
b_S^*(q,\bm{v})&=b_S(\bm{v},q)\qquad \forall (\bm{v},q)\in [U_{h_S}]^2\times P_h.
\end{split}
\label{eq:discreteaD}
\end{equation}

\section{Error analysis}\label{sec:error}
In this section, we first prove the existence and uniqueness of the numerical solution by using the monotone property of the nonlinear operator and the inf-sup conditions for the bilinear forms involved, in addition, the discrete trace inequality is the crux for the stability of pressure in the Stokes region. Then, we present the convergence estimates for all the variables measured in $L^2$-norm.

For the stability and convergence analysis, we also need some special properties of the operator $A$. First we have (cf. \cite{Girault08})
\begin{align}
\forall \bm{v},\bm{w}, \quad |A(\bm{v})-A(\bm{w})|\leq \frac{\mu}{\rho}\|K^{-1}\||\bm{v}-\bm{w}|+
\frac{\beta}{\rho}|\bm{v}-\bm{w}|(|\bm{v}|+|\bm{w}|)
.\label{continuity}
\end{align}
Next, we state the following three lemmas, which will be employed frequently in the stability and convergence analysis, and interested readers can refer to \cite{Girault08} for more details.
\begin{lemma}\label{lemma:monotone}
For fixed $\bm{u}_\ell\in L^3(\Omega_D)^2$, the mapping $\bm{u}\rightarrow A(\bm{u}+\bm{u}_\ell)$ defined by (\ref{A}) is monotone from $L^3(\Omega_D)^2$ into $L^{3/2}(\Omega_D)^2$:
\begin{align*}
\forall \bm{u},\bm{v}\in L^3(\Omega_D)^2, \int_{\Omega_D} (A(\bm{u}+\bm{u}_\ell)-A(\bm{v}+\bm{u}_\ell))\cdot (\bm{u}-\bm{v})\;dx\geq \frac{\mu}{\rho}\lambda_{\min} \|\bm{u}-\bm{v}\|_{L^2(\Omega_D)}^2,
\end{align*}
where $\lambda_{\min}>0$ denotes the smallest eigenvalue of $K$ over $\Omega_D$.

\end{lemma}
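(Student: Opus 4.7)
The plan is to reduce the integral inequality to two pointwise estimates and then integrate over $\Omega_D$. First I would rewrite, using the definition of $A$ in \eqref{A},
\begin{equation*}
A(\bm{u}+\bm{u}_\ell)-A(\bm{v}+\bm{u}_\ell)=\frac{\mu}{\rho}K^{-1}(\bm{u}-\bm{v})+\frac{\beta}{\rho}\bigl(|\bm{a}|\bm{a}-|\bm{b}|\bm{b}\bigr),
\end{equation*}
where $\bm{a}:=\bm{u}+\bm{u}_\ell$ and $\bm{b}:=\bm{v}+\bm{u}_\ell$, so that $\bm{a}-\bm{b}=\bm{u}-\bm{v}$ and the translation by $\bm{u}_\ell$ drops out of the linear part. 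Taking the dot product with $\bm{u}-\bm{v}$ therefore splits the integrand into a linear piece (involving $K^{-1}$) and a cubic piece that can be handled independently.

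For the linear piece, I would use the uniform positive definiteness of $K$ stated at the beginning of Section~2 to bound
\begin{equation*}
\tfrac{\mu}{\rho}\bigl(K^{-1}(\bm{u}-\bm{v})\bigr)\cdot(\bm{u}-\bm{v})\geq \tfrac{\mu}{\rho}\lambda_{\min}|\bm{u}-\bm{v}|^{2}
\end{equation*}
pointwise a.e.\ in $\Omega_D$, which upon integration is precisely the desired lower bound. All that remains is to check that the nonlinear piece contributes non-negatively.

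The crux is the elementary pointwise inequality $(|\bm{a}|\bm{a}-|\bm{b}|\bm{b})\cdot(\bm{a}-\bm{b})\geq 0$ for any $\bm{a},\bm{b}\in\mathbb{R}^{2}$. I would prove it by expanding to
\begin{equation*}
|\bm{a}|^{3}+|\bm{b}|^{3}-(|\bm{a}|+|\bm{b}|)\,\bm{a}\cdot\bm{b},
\end{equation*}
bounding the cross term by Cauchy--Schwarz as $(|\bm{a}|+|\bm{b}|)\bm{a}\cdot\bm{b}\leq(|\bm{a}|+|\bm{b}|)|\bm{a}||\bm{b}|$, and then recognizing the resulting expression as the manifestly non-negative factorization $(|\bm{a}|-|\bm{b}|)^{2}(|\bm{a}|+|\bm{b}|)$. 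Integrating, the nonlinear contribution drops out and the linear bound delivers the stated estimate. I also would record that $\bm{w}\mapsto|\bm{w}|\bm{w}$ sends $L^{3}(\Omega_D)^{2}$ into $L^{3/2}(\Omega_D)^{2}$ (via Hölder), which together with the boundedness of $K^{-1}$ justifies the mapping property $A:L^{3}(\Omega_D)^{2}\to L^{3/2}(\Omega_D)^{2}$. The only non-mechanical step is the Cauchy--Schwarz/factorization maneuver for the cubic term; everything else is routine bookkeeping.
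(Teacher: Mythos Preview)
Your argument is correct and is the standard one: split $A$ into its linear and nonlinear parts, use positive definiteness of $K^{-1}$ pointwise for the former, and for the latter verify $(|\bm{a}|\bm{a}-|\bm{b}|\bm{b})\cdot(\bm{a}-\bm{b})\ge 0$ via the factorization $(|\bm{a}|-|\bm{b}|)^{2}(|\bm{a}|+|\bm{b}|)$. The paper does not give its own proof of this lemma---it simply refers the reader to \cite{Girault08}, where essentially this same pointwise-then-integrate argument appears---so there is nothing further to compare.
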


\begin{lemma}\label{lemma:coercivity}
For fixed $\bm{u}_l\in L^3(\Omega_D)^2$, the mapping $\bm{u}\rightarrow A(\bm{u}+\bm{u}_l)$ defined by (\ref{A}) is coercive in $L^3(\Omega_D)^2$
\begin{align*}
\lim_{\|\bm{u}\|_{L^3(\Omega_D)}\rightarrow \infty}\Big(\frac{1}{\|\bm{u}_h\|_{L^3(\Omega_D)}}\int_{\Omega_D} A(\bm{u}+\bm{u}_l)\cdot\bm{u}\;dx\Big)=\infty.
\end{align*}

\end{lemma}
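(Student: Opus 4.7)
The strategy is to isolate the cubic growth supplied by the Forchheimer nonlinearity $|\bm{v}|\bm{v}$ and show it dominates all lower-order contributions as $\|\bm{u}\|_{L^3(\Omega_D)}\to\infty$. My first move would be to set $\bm{w}:=\bm{u}+\bm{u}_l$ and rewrite $\bm{u}=\bm{w}-\bm{u}_l$ inside the test slot, yielding the decomposition
\begin{align*}
\int_{\Omega_D} A(\bm{u}+\bm{u}_l)\cdot\bm{u}\,dx = \int_{\Omega_D} A(\bm{w})\cdot\bm{w}\,dx - \int_{\Omega_D} A(\bm{w})\cdot\bm{u}_l\,dx.
\end{align*}
This is the cleanest split because the ``diagonal'' term produces a direct cubic lower bound while the cross term collects everything depending on the fixed perturbation $\bm{u}_l$.

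For the diagonal term, expanding the definition \eqref{A} and using that $K^{-1}$ is (uniformly) positive definite gives
\begin{align*}
\int_{\Omega_D} A(\bm{w})\cdot\bm{w}\,dx = \frac{\mu}{\rho}\int_{\Omega_D} K^{-1}\bm{w}\cdot\bm{w}\,dx + \frac{\beta}{\rho}\|\bm{w}\|_{L^3(\Omega_D)}^3 \geq \frac{\beta}{\rho}\|\bm{w}\|_{L^3(\Omega_D)}^3.
\end{align*}
For the cross integral I would apply H\"older's inequality to the linear and nonlinear parts of $A$ separately: the linear part is controlled by $C\|\bm{w}\|_{L^3(\Omega_D)}\|\bm{u}_l\|_{L^3(\Omega_D)}$ (passing from $L^{3/2}$ to $L^3$ by boundedness of $\Omega_D$), and the nonlinear piece satisfies the sharp bound
\begin{align*}
\Bigl|\int_{\Omega_D}|\bm{w}|\bm{w}\cdot\bm{u}_l\,dx\Bigr|\leq \|\bm{w}\|_{L^3(\Omega_D)}^{2}\|\bm{u}_l\|_{L^3(\Omega_D)},
\end{align*}
thanks to the H\"older pairing $3/2+3=3$.

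Combining the two estimates and using the reverse triangle inequality $\|\bm{w}\|_{L^3(\Omega_D)}\geq \|\bm{u}\|_{L^3(\Omega_D)}-\|\bm{u}_l\|_{L^3(\Omega_D)}$ (which in particular forces $\|\bm{w}\|_{L^3(\Omega_D)}\to\infty$ whenever $\|\bm{u}\|_{L^3(\Omega_D)}\to\infty$), I would divide through by $\|\bm{u}\|_{L^3(\Omega_D)}$ and observe that the leading contribution $\|\bm{w}\|_{L^3}^{3}/\|\bm{u}\|_{L^3}$ scales like $\|\bm{u}\|_{L^3(\Omega_D)}^{2}$, whereas the subtracted terms are at most of order $\|\bm{u}\|_{L^3(\Omega_D)}$. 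Hence the quotient tends to $+\infty$. There is no genuine obstacle beyond careful bookkeeping; the only sensitive point is to confirm that the cross term $\int |\bm{w}|\bm{w}\cdot\bm{u}_l\,dx$ is strictly \emph{subcubic} in $\|\bm{w}\|_{L^3(\Omega_D)}$ (and so cannot compete with the coercive gain), which is exactly what the H\"older exponent pairing guarantees.
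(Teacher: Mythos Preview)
Your argument is correct: the split $\int A(\bm{w})\cdot\bm{u}=\int A(\bm{w})\cdot\bm{w}-\int A(\bm{w})\cdot\bm{u}_l$, the cubic lower bound from the Forchheimer term, and the H\"older control of the cross terms are exactly what is needed, and the order-of-growth comparison at the end is sound (the quadratic-in-$\|\bm{u}\|_{L^3}$ gain dominates the at-most-linear subtractions). Note, however, that the paper does not actually supply its own proof of this lemma; it merely states the result and refers the reader to \cite{Girault08}, so there is no in-paper argument to compare against---your proof simply fills in what the paper outsourced. One cosmetic remark: your phrase ``H\"older pairing $3/2+3=3$'' should read $\tfrac{1}{3/2}+\tfrac{1}{3}=1$, but the estimate you use it for is correct.
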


\begin{lemma}\label{lemma:hemicontinuity}
The mapping $A$ is hemi-continuous in $L^3(\Omega_D)^2$; for fixed $\bm{u}_l,\bm{u}$ and $\bm{v}$ in $L^3(\Omega_D)^2$, the mapping
\begin{align*}
t\rightarrow \int_{\Omega_D} A(\bm{u}_l+\bm{u}+t\bm{v})\cdot\bm{v}\;dx
\end{align*}
is continuous from $\mathbb{R}$ into $\mathbb{R}$.

\end{lemma}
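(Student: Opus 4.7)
The plan is to split $A$ into its linear Darcy part and its nonlinear Forchheimer part and treat each separately. For the linear piece $\frac{\mu}{\rho}K^{-1}(\bm{u}_l+\bm{u}+t\bm{v})$, the corresponding real-valued function
\[
t\mapsto \frac{\mu}{\rho}\int_{\Omega_D} K^{-1}(\bm{u}_l+\bm{u}+t\bm{v})\cdot\bm{v}\,dx
\]
is affine in $t$, with slope $\frac{\mu}{\rho}\int_{\Omega_D}K^{-1}\bm{v}\cdot\bm{v}\,dx$, which is finite because $K^{-1}$ is bounded and $\bm{v}\in L^3(\Omega_D)^2\subset L^2(\Omega_D)^2$ on the bounded domain $\Omega_D$. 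Hence this part is obviously continuous, and the real work reduces to the Forchheimer term.

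For the Forchheimer contribution, I would apply the Lebesgue dominated convergence theorem. Fix $t\in\mathbb{R}$ and take any sequence $t_n\to t$; write $\bm{w}:=\bm{u}_l+\bm{u}\in L^3(\Omega_D)^2$ and set
\[
f_n(x):=\bigl|\bm{w}(x)+t_n\bm{v}(x)\bigr|\bigl(\bm{w}(x)+t_n\bm{v}(x)\bigr)\cdot\bm{v}(x).
\]
Because the map $\bm{z}\mapsto|\bm{z}|\bm{z}$ is continuous on $\mathbb{R}^2$, the integrands $f_n$ converge pointwise a.e.\ to $f(x):=|\bm{w}(x)+t\bm{v}(x)|(\bm{w}(x)+t\bm{v}(x))\cdot\bm{v}(x)$.

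To produce an integrable majorant, it is enough to restrict attention to a bounded neighborhood of $t$, say $|t_n|\le M$. Then
\[
|f_n(x)|\le(|\bm{w}(x)|+M|\bm{v}(x)|)^2|\bm{v}(x)|\le C_M\bigl(|\bm{w}(x)|^2|\bm{v}(x)|+|\bm{v}(x)|^3\bigr).
\]
Hölder's inequality with conjugate exponents $3/2$ and $3$ gives $\int_{\Omega_D}|\bm{w}|^2|\bm{v}|\,dx\le \|\bm{w}\|_{L^3(\Omega_D)}^2\|\bm{v}\|_{L^3(\Omega_D)}$, and the second term is just $\|\bm{v}\|_{L^3(\Omega_D)}^3$; both are finite by hypothesis. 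Dominated convergence then yields $\int f_n\to\int f$, which together with the linear part shows continuity in $t$.

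I do not expect any real obstacle here. The $L^3$ regularity of $\bm{u}_l,\bm{u},\bm{v}$ is precisely matched to the quadratic growth of $\bm{z}\mapsto|\bm{z}|\bm{z}$ via Hölder's inequality, so the dominating function is easy to produce. The only point that needs mild care is restricting $t_n$ to a bounded neighborhood of $t$ before extracting the majorant, which is a standard maneuver; pointwise continuity of $\bm{z}\mapsto|\bm{z}|\bm{z}$ takes care of the rest.
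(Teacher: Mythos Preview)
Your argument is correct: splitting off the affine Darcy part and handling the Forchheimer term by dominated convergence with the majorant $C_M(|\bm{w}|^2|\bm{v}|+|\bm{v}|^3)\in L^1(\Omega_D)$ is exactly the right strategy, and the $L^3$ integrability matches the quadratic growth of $\bm{z}\mapsto|\bm{z}|\bm{z}$ as you note. The paper does not actually supply its own proof of this lemma---it simply records the statement and refers the reader to \cite{Girault08}---so there is nothing to compare against beyond noting that your proof is the standard one and would be accepted without change.
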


Next, we aim to showing the unique solvability of the numerical approximations to \eqref{eq:discrete1} and the convergence estimates. For this purpose, we need to prove some novel discrete trace inequality and the generalized Poincar\'{e}-Friedrichs inequality as stated in Lemmas~\ref{lemma:trace} and \ref{lemma:poincarev}. For the ease of later analysis, we define the following nonconforming $P_1$ finite element space
\begin{align*}
M_h=\{q\in L^2(\Omega_D);q\mid_\tau\in P_1(\tau)\;\forall \tau\in \mathcal{T}_{h_D},\int_e[q]\;ds=0\; \forall e\in \mathcal{F}_{u,D}^0; q\mid_{\Gamma_D}=0\}
\end{align*}
and
\begin{align*}
H^1_0(\mathcal{T}_{h_D})=\{z\in L^2(\Omega_D); z\mid_\tau\in H^1(\tau)\; \forall \tau\in \mathcal{T}_{h_D}; z\mid_{\Gamma_D}=0\}.
\end{align*}

The next two lemmas (cf. \cite{Girault08}) will be exploited in the proof of our discrete trace inequality and generalized Poincar\'{e}-Friedrichs inequality.
\begin{lemma}\label{lemma:L2g}
For any pair of numbers $p\geq 1$ and $q>1$ such that
\begin{align*}
p\leq p_0\;\mbox{with}\; \frac{1}{p_0}=\frac{1}{q}-\frac{1}{2}\geq 0,
\end{align*}
there exists a constant $C$, independent of the meshsize such that
\begin{align*}
\|z_h\|_{L^p(\Omega_D)}\leq C (\sum_{\tau\in \mathcal{T}_{h_D}}\|\nabla z_h\|_{L^q(\tau)}^q)^{1/q}\quad \forall z_h\in M_h.
\end{align*}

\end{lemma}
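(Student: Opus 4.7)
The plan is to reduce the stated discrete inequality to the continuous Sobolev embedding $W^{1,q}_0(\Omega_D)\hookrightarrow L^{p_0}(\Omega_D)$ in dimension two, which holds precisely under the exponent relation $\tfrac{1}{p_0}=\tfrac{1}{q}-\tfrac{1}{2}\geq 0$; once this is in hand, the case $p\leq p_0$ follows from the continuous embedding $L^{p_0}\hookrightarrow L^p$ on a bounded domain. The bridge between $M_h$ (nonconforming) and $H^1_0(\Omega_D)$ (conforming) will be supplied by an Oswald-type averaging operator.

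First, I would introduce the conforming piecewise linear space $\widetilde M_h\subset H^1_0(\Omega_D)$ on the triangulation $\mathcal{T}_{h_D}$ and the averaging operator $R_h:M_h\to\widetilde M_h$ defined at each interior vertex $v$ by
\[
(R_h z_h)(v)=\frac{1}{\#\mathcal{T}(v)}\sum_{\tau\in\mathcal{T}(v)} z_h|_\tau(v),
\]
with $(R_h z_h)(v)=0$ at vertices lying on $\Gamma_D$. A standard scaling argument, together with the shape regularity coming from Assumption~\ref{assum:regularity}, yields the local estimate
\[
\|\nabla(z_h-R_h z_h)\|_{L^q(\tau)}^q+h_\tau^{-q}\|z_h-R_h z_h\|_{L^q(\tau)}^q
\leq C\sum_{e\in\mathcal{E}(\omega_\tau)} h_e^{1-q}\|[z_h]\|_{L^q(e)}^q,
\]
where $\omega_\tau$ denotes a small patch around $\tau$ and $\mathcal{E}(\omega_\tau)$ its edges (with jumps on boundary edges interpreted as the trace against $0$).

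Next, I would invoke the characteristic property of $M_h$: on each interior edge $e$, $[z_h]|_e$ is affine with $\int_e[z_h]\,ds=0$, and on each boundary edge contained in $\Gamma_D$ the trace vanishes. Therefore one may apply a Poincaré inequality on the edge and a broken trace inequality on the patch to absorb the jump terms into the broken gradient; concretely,
\[
h_e^{1-q}\|[z_h]\|_{L^q(e)}^q\leq C\sum_{\tau\subset\omega_e}\|\nabla z_h\|_{L^q(\tau)}^q.
\]
Summing over $\tau$ and using finite overlap of patches delivers both $\|\nabla R_h z_h\|_{L^q(\Omega_D)}\leq C(\sum_\tau\|\nabla z_h\|_{L^q(\tau)}^q)^{1/q}$ and an analogous bound for $\|z_h-R_h z_h\|_{L^p(\Omega_D)}$ via an elementwise discrete embedding $L^q(\tau)\hookrightarrow L^p(\tau)$ plus the approximation estimate above. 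Finally, applying the continuous Sobolev embedding to $R_h z_h\in H^1_0(\Omega_D)$ gives $\|R_h z_h\|_{L^p(\Omega_D)}\leq C\|\nabla R_h z_h\|_{L^q(\Omega_D)}$, and the triangle inequality $\|z_h\|_{L^p}\leq\|z_h-R_h z_h\|_{L^p}+\|R_h z_h\|_{L^p}$ completes the argument.

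The main obstacle is the treatment of the jump contributions: one must convert the edge norms $h_e^{1-q}\|[z_h]\|_{L^q(e)}^q$ produced by the nonconformity into elementwise gradient energy, and this is where the mean-value zero condition $\int_e[z_h]\,ds=0$ built into $M_h$ is essential, since without it only a seminorm statement would survive. A secondary technical point is making the local-to-global passage robust up to the Dirichlet boundary $\Gamma_D$, which is handled cleanly by the choice $(R_h z_h)(v)=0$ for $v\in\Gamma_D$ combined with the mesh regularity assumption.
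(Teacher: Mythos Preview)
The paper does not actually prove this lemma; it is stated with a citation to \cite{Girault08} (Girault--Wheeler), so there is no in-paper proof to compare against. Your strategy---construct a conforming Oswald/enriching interpolant $R_h z_h$, control $z_h-R_h z_h$ locally via the jump terms, absorb the jumps into the broken gradient using the edge mean-value condition $\int_e[z_h]\,ds=0$, and then apply the continuous Sobolev embedding to $R_h z_h$---is a standard and correct route and is essentially the way such discrete Sobolev inequalities are established (cf.\ Brenner~\cite{Brenner} for the $q=2$ prototype and Girault--Wheeler for general $q$).

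One point to tighten: you write $R_h z_h\in H^1_0(\Omega_D)$ and invoke the embedding $W^{1,q}_0(\Omega_D)\hookrightarrow L^{p_0}(\Omega_D)$, but functions in $M_h$ vanish only on $\Gamma_D$, not on the interface $\Gamma$; hence $R_h z_h$ belongs to $\{v\in W^{1,q}(\Omega_D):v|_{\Gamma_D}=0\}$ rather than $W^{1,q}_0(\Omega_D)$. This is harmless---the Sobolev--Poincar\'{e} inequality $\|v\|_{L^{p_0}(\Omega_D)}\leq C\|\nabla v\|_{L^q(\Omega_D)}$ still holds for functions vanishing on a boundary portion of positive measure---but you should state and use that version. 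A second, smaller point: the definition of $M_h$ in the paper imposes the mean-zero jump condition only on edges in $\mathcal{F}_{u,D}^0$, whereas your enriching-operator estimate produces jump contributions on \emph{all} interior edges, including those in $\mathcal{F}_{p,D}$; in the paper's applications (Lemmas~\ref{lemma:trace} and~\ref{lemma:poincarev}) the relevant function is $\pi z_h$, a Crouzeix--Raviart interpolant with zero-mean jumps on every interior edge, so in practice this causes no trouble, but your write-up should note that the jump-to-gradient step relies on $\int_e[z_h]\,ds=0$ on each interior edge being considered.
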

\begin{lemma}\label{lemma:traced}
For any pair of real numbers $p\geq 1$ and $q>1$ such that
\begin{align*}
p\leq p_0\;\mbox{with}\; \frac{1}{p_0}=\frac{2}{q}-1,
\end{align*}
there exists a constant $C$, independent of the meshsize such that
\begin{align*}
\|z_h\|_{L^p(\partial \Omega_D)}\leq C (\sum_{\tau\in \mathcal{T}_{h_D}}\|\nabla z_h\|_{L^q(\tau)}^q)^{1/q}\quad \forall z_h\in M_h.
\end{align*}

\end{lemma}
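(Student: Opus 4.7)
The plan is to follow the standard strategy for discrete trace/Sobolev embeddings on Crouzeix--Raviart type spaces: reduce the nonconforming function $z_h\in M_h$ to a conforming $H^1_0$ function by an averaging operator, then invoke the classical continuous trace and Sobolev embeddings, and finally control the discrepancy by the broken gradient using the zero-jump-mean property that defines $M_h$.

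First I would introduce an Oswald-type averaging operator $I_c:M_h\to P_1^{\text{conf}}\cap H^1_0(\Omega_D)$ built by assigning to each interior vertex the average of $z_h$ over the triangles sharing the vertex and zero on $\Gamma_D$. Using standard local estimates (cf.\ Brenner--Scott and the references in the paper) one has, for every $\tau\in\mathcal T_{h_D}$ and $q\ge1$,
\begin{align*}
\|z_h-I_c z_h\|_{L^q(\tau)}^q+h_\tau^q\|\nabla(z_h-I_c z_h)\|_{L^q(\tau)}^q\le C\sum_{e\subset\partial\tau}h_e^{q-1}\|[z_h]\|_{L^q(e)}^q.
\end{align*}
Because every $z_h\in M_h$ satisfies $\int_e[z_h]\,ds=0$ on each interior edge and vanishes on $\Gamma_D$, an edge Poincar\'e inequality gives $\|[z_h]\|_{L^q(e)}\le C h_e\|\nabla_\tau[z_h]\|_{L^q(e)}$, so the right-hand side is bounded by $C(\sum_{\tau}\|\nabla z_h\|_{L^q(\tau)}^q)$ after inverse estimates on the edges.

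Next I would apply the continuous trace and Sobolev embeddings to $I_c z_h$. In two dimensions, for $q>1$ with $\tfrac{1}{p_0}=\tfrac{2}{q}-1\ge0$, the chain $W^{1,q}_0(\Omega_D)\hookrightarrow W^{1-1/q,q}(\partial\Omega_D)\hookrightarrow L^{p_0}(\partial\Omega_D)$ yields
\begin{align*}
\|I_c z_h\|_{L^p(\partial\Omega_D)}\le C\|\nabla I_c z_h\|_{L^q(\Omega_D)}\le C\Big(\sum_{\tau\in\mathcal T_{h_D}}\|\nabla z_h\|_{L^q(\tau)}^q\Big)^{1/q},
\end{align*}
where in the last step I use the triangle inequality together with the averaging estimate of Step 1 applied to $\nabla(z_h-I_c z_h)$. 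For the discrepancy term on the boundary, a local scaled trace inequality on each boundary triangle gives
\begin{align*}
\|z_h-I_c z_h\|_{L^p(e)}^p\le C\bigl(h_\tau^{-1}\|z_h-I_c z_h\|_{L^p(\tau)}^p+h_\tau^{p-1}\|\nabla(z_h-I_c z_h)\|_{L^p(\tau)}^p\bigr),
\end{align*}
to which I apply the polynomial inverse inequality to pass from $L^p$ to $L^q$ and then the averaging bound. Summing the triangle inequality $\|z_h\|_{L^p(\partial\Omega_D)}\le\|I_c z_h\|_{L^p(\partial\Omega_D)}+\|z_h-I_c z_h\|_{L^p(\partial\Omega_D)}$ concludes the proof.

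The main obstacle will be the interplay between the two different exponents $p$ and $q$: the averaging estimate is naturally in the $L^q$ scale, while the target is an $L^p$ bound on $\partial\Omega_D$, so the balance between the local inverse estimates and the scaling factors $h_\tau$ must reproduce exactly the critical relation $\tfrac{1}{p_0}=\tfrac{2}{q}-1$; getting this scaling to come out sharp (rather than lossy) is the delicate bookkeeping step. Once this is handled, the rest of the argument is a routine combination of averaging estimates, edge Poincar\'e bounds using the Crouzeix--Raviart jump-mean condition, and the continuous trace embedding.
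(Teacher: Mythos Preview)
The paper does not actually prove this lemma. It is stated, together with the companion Lemma~\ref{lemma:L2g}, under the prefatory sentence ``The next two lemmas (cf.\ \cite{Girault08}) will be exploited in the proof of our discrete trace inequality and generalized Poincar\'{e}--Friedrichs inequality,'' and both are quoted without proof from Girault--Wheeler, \emph{Numerical discretization of a Darcy--Forchheimer model}, Numer.\ Math.\ 110 (2008). So there is no in-paper argument to compare your proposal against.

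That said, your strategy---pass to a conforming $P_1$ function via an Oswald/averaging operator, apply the continuous trace and Sobolev embeddings to that, and control the discrepancy using the zero-mean-jump property of the Crouzeix--Raviart space---is the standard and correct route for such discrete trace inequalities, and it is essentially the approach in the cited reference. One slip to fix: you place $I_c z_h$ in $P_1^{\mathrm{conf}}\cap H^1_0(\Omega_D)$ and then invoke the trace embedding $W^{1,q}_0(\Omega_D)\hookrightarrow L^{p_0}(\partial\Omega_D)$, but functions in $W^{1,q}_0(\Omega_D)$ have zero trace on all of $\partial\Omega_D$, which would make that step vacuous. Functions in $M_h$ vanish only on $\Gamma_D$, so the averaging should land in the conforming $P_1$ space with zero trace on $\Gamma_D$ only, and the continuous step should be the trace theorem combined with a Poincar\'{e} inequality for functions vanishing on the portion $\Gamma_D$ of the boundary. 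With that adjustment the bookkeeping you outline (including the critical scaling $\tfrac{1}{p_0}=\tfrac{2}{q}-1$) goes through.
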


Now we can prove the trace inequality, which can be used in the proof of the stability and convergence estimates.

\begin{lemma}(discrete trace inequality).\label{lemma:trace}
The following estimate holds
\begin{align*}
\|z_h\|_{L^2(\Gamma)}\leq C\|z_h\|_{Z_D} \quad \forall z_h\in U_{h_D}.
\end{align*}

\end{lemma}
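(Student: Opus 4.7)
The plan is to bridge $z_h \in U_{h_D}$ to the nonconforming $P_1$ space $M_h$, where Lemma~\ref{lemma:traced} is directly applicable. Concretely, I would let $\tilde z_h$ be the Crouzeix--Raviart-type interpolant of $z_h$, i.e. the unique piecewise $P_1$ function on $\mathcal{T}_{h_D}$ whose edge average matches that of $z_h$ on every edge of every $\tau$. Since $z_h$ is single-valued across each $e\in\mathcal{F}_{u,D}^0$ and vanishes on $\Gamma_D$, the corresponding edge averages agree from the two sides on $\mathcal{F}_{u,D}^0$ and vanish on $\Gamma_D$, so $\tilde z_h\in M_h$. I would then split
$$\|z_h\|_{L^2(\Gamma)}\leq \|\tilde z_h\|_{L^2(\Gamma)}+\|z_h-\tilde z_h\|_{L^2(\Gamma)}$$
and bound each term.

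For the first piece, I would apply Lemma~\ref{lemma:traced} with $p=2$ and $q=3/2$ (for which $1/p_0=2/q-1=1/3$, so $p_0=3\geq 2=p$) to obtain $\|\tilde z_h\|_{L^2(\Gamma)}\leq\|\tilde z_h\|_{L^2(\partial\Omega_D)}\leq C\bigl(\sum_\tau\|\nabla\tilde z_h\|_{L^{3/2}(\tau)}^{3/2}\bigr)^{2/3}$. The crucial identity is that $\nabla\tilde z_h|_\tau$ is constant and equals the elementwise average of $\nabla z_h|_\tau$, which follows because $\int_\tau\partial_i\tilde z_h\,dx=\int_{\partial\tau}\tilde z_h\,n_i\,ds$ depends only on the edge averages of $\tilde z_h$ and these coincide with those of $z_h$ by construction. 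Jensen's inequality then gives $\|\nabla\tilde z_h\|_{L^{3/2}(\tau)}\leq\|\nabla z_h\|_{L^{3/2}(\tau)}$, and summing produces $\|\tilde z_h\|_{L^2(\Gamma)}\leq C\|z_h\|_{Z_D}$.

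For the second piece, set $w:=z_h-\tilde z_h$. By construction $w$ has zero mean on every edge of every $\tau$, which triggers a Poincar\'e--Wirtinger-type estimate $\|w\|_{L^{3/2}(\tau)}\leq C h_\tau\|\nabla w\|_{L^{3/2}(\tau)}$, and together with the Jensen bound above one gets $\|\nabla w\|_{L^{3/2}(\tau)}\leq 2\|\nabla z_h\|_{L^{3/2}(\tau)}$. A scaled $L^{3/2}$ trace inequality on $\tau$ then yields $\|w\|_{L^{3/2}(e)}\leq C h_\tau^{1/3}\|\nabla z_h\|_{L^{3/2}(\tau)}$ for $e\subset\Gamma\cap\partial\tau$, and since $w|_e$ is a polynomial of degree $\leq k$ on a 1D edge, a discrete inverse inequality gives $\|w\|_{L^2(e)}\leq C h_e^{-1/6}\|w\|_{L^{3/2}(e)}$. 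Squaring, summing over $e\subset\Gamma$, and using the elementary bound $\sum_i a_i^{2}\leq(\sum_i a_i^{3/2})^{4/3}$ (which follows from $\max_i a_i^{1/2}\leq(\sum_i a_i^{3/2})^{1/3}$) one arrives at $\|w\|_{L^2(\Gamma)}\leq C h^{1/6}\|z_h\|_{Z_D}$, and combining the two pieces closes the proof.

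The main obstacle is the mismatch between the $L^{3/2}$-type gradient quantity built into $\|\cdot\|_{Z_D}$ and the $L^2$ trace norm we seek on $\Gamma$: a plain discrete trace on the fine mesh is useless because $\|\nabla z_h\|_{L^2(\Omega_D)}$ is not controlled by $\|z_h\|_{Z_D}$, and the Sobolev embedding that makes the correct exponents work is only at our disposal on the nonconforming $P_1$ space $M_h$ through Lemma~\ref{lemma:traced}. Routing $z_h$ through the CR-type interpolant $\tilde z_h\in M_h$ is exactly what makes this embedding available, and the price to pay is merely the polynomial residual $z_h-\tilde z_h$, which one controls by local Poincar\'e and inverse estimates.
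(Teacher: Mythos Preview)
Your overall strategy---route $z_h$ through a Crouzeix--Raviart interpolant so that Lemma~\ref{lemma:traced} becomes available---is exactly the paper's approach. However, there is a genuine gap in the first piece. Your interpolant $\tilde z_h$ is defined \emph{elementwise} by matching the one--sided edge averages of $z_h$. Since $z_h\in U_{h_D}$ is allowed to jump across edges in $\mathcal{F}_{p,D}$, the two one--sided averages on such edges need not agree, so $\int_e[\tilde z_h]\,ds=\int_e[z_h]\,ds$ may be nonzero there. The trace estimate of Lemma~\ref{lemma:traced} (from \cite{Girault08}) is for the standard nonconforming $P_1$ space with zero--mean jumps on \emph{all} interior edges, not only on $\mathcal{F}_{u,D}^0$; the paper's displayed definition of $M_h$ is slightly misstated in this respect, but its proof makes the intended space clear. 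Hence Lemma~\ref{lemma:traced} is not applicable to your $\tilde z_h$. A concrete symptom: your final bound uses only the gradient part of $\|z_h\|_{Z_D}$ and never the jump term $\sum_{e\in\mathcal{F}_{p,D}}h_e^{-1/2}\int_e|[z_h]|^{3/2}\,ds$. Take a piecewise constant $z_h\in U_{h_D}$ that is nonzero on a triangle touching $\Gamma$ and jumps across the adjacent $\mathcal{F}_{p,D}$ edges; then $\nabla z_h\equiv 0$ elementwise while $\|z_h\|_{L^2(\Gamma)}>0$, which contradicts what your argument would give.

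The fix is precisely what the paper does: replace $\tilde z_h$ by the averaged interpolant $\pi z_h$, defined at each edge midpoint by the mean of the two traces, so that $\pi z_h$ genuinely sits in the CR space and Lemma~\ref{lemma:traced} applies. The price is an extra term $\Pi_\tau z_h-\pi z_h$ (where $\Pi_\tau$ is your local, one--sided interpolant); its midpoint values on the $\mathcal{F}_{p,D}$ edges of $\tau$ are $\frac{1}{2h_e}\int_e[z_h]\,ds$, and bounding its trace on $\Gamma$ is what brings in the jump contribution of $\|z_h\|_{Z_D}$. Your treatment of the local residual $z_h-\Pi_\tau z_h$ (your ``second piece'') is fine and matches the paper's handling of $z_h-\Pi_\tau z_h$.
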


\begin{proof}

Following \cite{Brenner}, we define $\pi:H^1_0(\mathcal{T}_{h_D})\rightarrow M_h$ by
\begin{align*}
(\pi\xi)(m)=\frac{1}{h_e}\int_e \{\xi\}\;ds\quad \forall e\in \mathcal{F}_{D},
\end{align*}
where $m$ is the middle point of the edge $e$ and $\{\xi\}$ is the average of the traces from the two sides of $e$, for $e\subset \Gamma$, $\{\xi\}\mid_e=\xi\mid_e$.

Let $\Pi_\tau: H^1(\tau)\rightarrow P_1(\tau), \forall \tau\in \mathcal{T}_{h_D}$ be the local interpolation operator defined by
\begin{align*}
(\Pi_\tau\xi)(m)=\frac{1}{h_e}\int_e \xi\;ds\quad \forall e\subset \partial \tau.
\end{align*}
We have
\begin{align*}(\pi\xi-\Pi_\tau\xi)(m)=
  \begin{cases}
    \frac{1}{2h_e}\int_e [\xi]\;ds \quad \forall e\subset \partial \tau\backslash(\Gamma_D\cup \Gamma),\\
    0\qquad \forall e\subset \partial \tau\cap (\Gamma_D\cup \Gamma). \\
\end{cases}
\end{align*}

It is well known that the following estimates hold for the local interpolation operator (cf. \cite{Ciarlet78})
\begin{align*}
\|\xi-\Pi_\tau \xi\|_{L^2(\tau)}&\leq C h_\tau^{2/3}|\xi|_{W^{1,3/2}(\tau)}\quad\forall \xi\in W^{1,3/2}(\tau), \forall \tau\in \mathcal{T}_{h_D},\\
\|\nabla (\xi-\Pi_\tau \xi)\|_{L^2(\tau)}&\leq C h_\tau^{-1/3}|\xi|_{W^{1,3/2}(\tau)}\quad\forall  \xi\in W^{1,3/2}(\tau), \forall \tau\in \mathcal{T}_{h_D},
\end{align*}
which can be combined with the continuous trace inequality yields
\begin{align*}
\|\xi-\Pi_\tau \xi\|_{L^2(e)}&\leq C\Big( h_\tau^{-1/2}\|\xi-\Pi_\tau \xi\|_{L^2(\tau)}+h_\tau^{1/2}\|\nabla (\xi-\Pi_\tau \xi)\|_{L^2(\tau)}\Big)\\
&\leq C h_\tau^{1/6}|\xi|_{W^{1,3/2}(\tau)}\quad \forall  \xi\in W^{1,3/2}(\tau), \forall e\subset \partial \tau.
\end{align*}
It follows from the triangle inequality that
\begin{align*}
\|z_h\|_{L^2(\Gamma)}
&\leq  \|z_h-\Pi_\tau z_h\|_{L^2(\Gamma)}+\|\Pi_\tau z_h-\pi z_h\|_{L^2(\Gamma)}+\|\pi z_h\|_{L^2(\Gamma)}\\
&\leq C \Big(\Big(\sum_{\tau\in \mathcal{T}_{h_D}}\int_\tau |\nabla z_h|^{3/2}\;dx\Big)^{2/3}+\|\pi z_h\|_{L^2(\Gamma)}\Big).
\end{align*}
Taking $p=2$, $q=3/2$ in Lemma~\ref{lemma:traced} yields
\begin{align*}
\|\pi z_h\|_{L^2(\Gamma)}\leq C \Big(\sum_{\tau\in \mathcal{T}_{h_D}}\|\nabla (\pi z_h)\|_{L^{3/2}(\tau)}^{3/2}\Big)^{2/3}.
\end{align*}
Thus
\begin{align*}
\|z_h\|_{L^2(\Gamma)}\leq C\|z_h\|_{Z_D}.
\end{align*}
\end{proof}

Proceeding analogously, we can prove
\begin{corollary}\label{coro:trace}
The following estimate holds
\begin{align*}
\|\bm{v}_h\|_{L^2(\Gamma)}\leq C \|\bm{v}_h\|_h\quad \forall \bm{v}_h\in [U_{h_S}]^2.
\end{align*}

\end{corollary}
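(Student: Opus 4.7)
The plan is to reduce the vector-valued statement to a scalar one and then reproduce the argument of Lemma~\ref{lemma:trace}, but now with the $L^2$-based norm $\|\cdot\|_{Z_S}$ (which is simpler to handle than the $L^{3/2}$-based $\|\cdot\|_{Z_D}$). Since $\|\bm v_h\|_h^2=\|v_1\|_{Z_S}^2+\|v_2\|_{Z_S}^2$ and $\|\bm v_h\|_{L^2(\Gamma)}^2=\|v_1\|_{L^2(\Gamma)}^2+\|v_2\|_{L^2(\Gamma)}^2$, it suffices to prove $\|v\|_{L^2(\Gamma)}\leq C\|v\|_{Z_S}$ for every $v\in U_{h_S}$.

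First I would introduce on $\mathcal{T}_{h_S}$ the obvious analogues of the operators used in Lemma~\ref{lemma:trace}: the local Lagrange-type operator $\Pi_\tau:H^1(\tau)\to P_1(\tau)$ defined by edge-midpoint averages, and the global averaging operator $\pi:H^1_0(\mathcal{T}_{h_S})\to M_h^S$, where $M_h^S$ is the nonconforming $P_1$ space on $\mathcal{T}_{h_S}$ vanishing on $\Gamma_S$, defined by $(\pi\xi)(m_e)=\frac{1}{h_e}\int_e\{\xi\}\,ds$. The triangle inequality gives
\begin{equation*}
\|v\|_{L^2(\Gamma)}\leq \|v-\Pi_\tau v\|_{L^2(\Gamma)}+\|\Pi_\tau v-\pi v\|_{L^2(\Gamma)}+\|\pi v\|_{L^2(\Gamma)},
\end{equation*}
and each piece is estimated separately.

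For the first term, the standard $L^2$-interpolation and continuous trace estimates on each triangle adjacent to $\Gamma$ give $\|v-\Pi_\tau v\|_{L^2(e)}\leq C h_\tau^{1/2}|v|_{H^1(\tau)}$, whose square-sum is bounded by $\sum_{\tau\in\mathcal{T}_{h_S}}\|\nabla v\|_{L^2(\tau)}^2$. For the second term I would use the identity $(\pi v-\Pi_\tau v)(m_e)=\tfrac{1}{2h_e}\int_e[v]\,ds$ on interior edges and $(\pi v-\Pi_\tau v)(m_e)=0$ on edges of $\partial\Omega_S$; combined with $[v]=0$ on $\mathcal{F}_{u,S}^0$ (which is the defining staggered continuity of $U_{h_S}$), only jumps on $\mathcal{F}_{p,S}$ contribute. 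A Cauchy--Schwarz on the edge and scaling for $P_1$ functions on a triangle then control this by $C\bigl(\sum_{e\in\mathcal{F}_{p,S}}h_e^{-1}\|[v]\|_{0,e}^2\bigr)^{1/2}$. For the third term I invoke Lemma~\ref{lemma:traced} with $p=q=2$ (so $1/p_0=0$ and $p=2$ is admissible), applied to $\mathcal{T}_{h_S}$ over $\partial\Omega_S\supset\Gamma$, which yields $\|\pi v\|_{L^2(\Gamma)}\leq C(\sum_\tau\|\nabla\pi v\|_{L^2(\tau)}^2)^{1/2}$; a standard inverse-and-scaling bound then relates $\|\nabla\pi v\|_{L^2(\tau)}$ back to $\|\nabla v\|_{L^2(\tau)}$ plus the jumps of $v$ on the adjacent edges in $\mathcal{F}_{p,S}$.

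The only delicate point — and the analogue of the main obstacle in Lemma~\ref{lemma:trace} — is verifying that the averaging operator $\pi$ interacts correctly with the staggered continuity of $U_{h_S}$, so that only the jumps on $\mathcal{F}_{p,S}$ (which are precisely what enter $\|\cdot\|_{Z_S}$) survive in the estimates. Once this is observed, every remaining step is a routine scaling and triangle inequality bookkeeping, and summing the three contributions yields $\|v\|_{L^2(\Gamma)}\leq C\|v\|_{Z_S}$, completing the proof.
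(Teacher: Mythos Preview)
Your proposal is correct and follows essentially the same approach the paper intends: the paper simply states ``Proceeding analogously, we can prove'' after Lemma~\ref{lemma:trace}, and your argument is exactly that analogue, carried out on $\mathcal{T}_{h_S}$ with the $L^2$-based norm $\|\cdot\|_{Z_S}$ in place of the $L^{3/2}$-based $\|\cdot\|_{Z_D}$. Your componentwise reduction, the three-term splitting via $\Pi_\tau$ and $\pi$, the use of Lemma~\ref{lemma:traced} with $p=q=2$, and the observation that only jumps across $\mathcal{F}_{p,S}$ survive are all the correct adaptations, and in fact you make explicit several steps (the handling of $\|\Pi_\tau v-\pi v\|_{L^2(\Gamma)}$ and the control of $\|\nabla\pi v\|_{L^2(\tau)}$) that the paper leaves implicit.
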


Next, we prove the following generalized Poincar\'{e}-Friedrichs inequality, which will be useful for the subsequent analysis.
\begin{lemma}(generalized Poincar\'{e}-Friedrichs inequality). \label{lemma:poincarev}
The following estimate holds
\begin{align*}
\|q_h\|_{L^2(\Omega_D)}\leq C \|q_h\|_{Z_D}\quad \forall q_h\in U_{h_D}.
\end{align*}

\end{lemma}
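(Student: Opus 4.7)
The plan is to mirror the proof of the discrete trace inequality (Lemma~\ref{lemma:trace}), but with the $L^2$-norm taken over $\Omega_D$ rather than $\Gamma$. Specifically, I would reuse the Crouzeix--Raviart-type interpolant $\pi:H^1_0(\mathcal{T}_{h_D})\to M_h$ and the local $P_1$ interpolant $\Pi_\tau:H^1(\tau)\to P_1(\tau)$ from that proof, and decompose
\begin{equation*}
q_h = (q_h-\Pi_\tau q_h) + (\Pi_\tau q_h - \pi q_h) + \pi q_h,
\end{equation*}
and then estimate the three pieces separately in $L^2(\Omega_D)$.

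The first piece is handled by the standard local interpolation estimate $\|q_h-\Pi_\tau q_h\|_{L^2(\tau)}\leq C h_\tau^{2/3}|q_h|_{W^{1,3/2}(\tau)}$ already invoked in the proof of Lemma~\ref{lemma:trace}; summing and using discrete H\"older's inequality controls this term by $C\bigl(\sum_\tau |q_h|_{W^{1,3/2}(\tau)}^{3/2}\bigr)^{2/3}$, which is part of $\|q_h\|_{Z_D}$. For the second piece, recall that $(\Pi_\tau q_h-\pi q_h)(m_e)=\frac{1}{2h_e}\int_e[q_h]\,ds$ for interior edges $e$ and vanishes on $\Gamma_D\cup\Gamma$. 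Since $q_h\in U_{h_D}$ has zero jumps on every $e\in\mathcal{F}_{u,D}^0$, only edges in $\mathcal{F}_{p,D}$ contribute. A scaling/norm-equivalence argument on each triangle combined with H\"older's inequality on each edge then yields
\begin{equation*}
\|\Pi_\tau q_h-\pi q_h\|_{L^2(\Omega_D)}^{3/2}\leq C\sum_{e\in\mathcal{F}_{p,D}}h_e^{-1/2}\int_e|[q_h]|^{3/2}\,ds,
\end{equation*}
which is exactly the jump part of $\|q_h\|_{Z_D}^{3/2}$.

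For the third piece, I would apply Lemma~\ref{lemma:L2g} with $p=2$ and $q=3/2$. These satisfy $\tfrac{1}{p_0}=\tfrac{1}{q}-\tfrac{1}{2}=\tfrac{1}{6}$, hence $p_0=6\geq 2$, so
\begin{equation*}
\|\pi q_h\|_{L^2(\Omega_D)}\leq C\|\pi q_h\|_{L^{p_0}(\Omega_D)}\leq C\Bigl(\sum_{\tau\in\mathcal{T}_{h_D}}\|\nabla(\pi q_h)\|_{L^{3/2}(\tau)}^{3/2}\Bigr)^{2/3}.
\end{equation*}
Then I would write $\nabla(\pi q_h)=\nabla(\pi q_h-\Pi_\tau q_h)+\nabla(\Pi_\tau q_h-q_h)+\nabla q_h$ and bound each summand: $\nabla q_h$ appears directly in $\|q_h\|_{Z_D}$; $\nabla(\Pi_\tau q_h-q_h)$ is controlled by $h_\tau^{-1/3}|q_h|_{W^{1,3/2}(\tau)}$ via the interpolation estimate already used in Lemma~\ref{lemma:trace}; and $\nabla(\pi q_h-\Pi_\tau q_h)$ is controlled through a reference-element scaling using the explicit midpoint values $\tfrac{1}{2h_e}\int_e[q_h]\,ds$.

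The main technical obstacle will be the last of these three bounds. Since $\pi q_h-\Pi_\tau q_h$ is linear on $\tau$ and its midpoint values encode the jumps, norm equivalence gives $\|\nabla(\pi q_h-\Pi_\tau q_h)\|_{L^{3/2}(\tau)}^{3/2}\leq C h_\tau^{1/2}\max_{e\in\partial\tau\cap\mathcal{F}_{p,D}}\bigl|h_e^{-1}\int_e[q_h]\,ds\bigr|^{3/2}$; H\"older's inequality on the edge then contributes an additional factor $h_e^{1/2}$, producing precisely the weight $h_e^{-1/2}$ required to absorb this into the $\|q_h\|_{Z_D}$ norm. Combining the three estimates and collecting powers finishes the proof.
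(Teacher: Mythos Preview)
Your proposal is correct and follows essentially the same route as the paper: both decompose $q_h$ via the Crouzeix--Raviart interpolant $\pi$ and the local $P_1$ interpolant $\Pi_\tau$, bound the first two pieces by local interpolation and jump estimates, and invoke Lemma~\ref{lemma:L2g} with $p=2$, $q=3/2$ for $\pi q_h$ (the paper simply asserts the final bound on $\|\nabla(\pi q_h)\|_{L^{3/2}}$, while you spell out the further decomposition). One small correction: for $\nabla(\Pi_\tau q_h-q_h)$ you quote the $L^2$ estimate $Ch_\tau^{-1/3}|q_h|_{W^{1,3/2}(\tau)}$ from Lemma~\ref{lemma:trace}, but what you actually need here is the $L^{3/2}$ estimate $\|\nabla(\Pi_\tau q_h-q_h)\|_{L^{3/2}(\tau)}\leq C|q_h|_{W^{1,3/2}(\tau)}$, which carries no negative power of $h_\tau$ and hence sums cleanly.
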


\begin{proof}
First, the standard estimates for the local interpolation operator (cf. \cite{Ciarlet78}) imply
\begin{align*}
\|q_h-\Pi_\tau q_h\|_{L^2(\tau)}\leq C h_\tau^{2/3}|q_h|_{W^{1,3/2}(\tau)}.
\end{align*}
Proceeding analogously to (2.5) in \cite{Brenner}, we can obtain
\begin{align*}
\|\Pi_\tau q_h-\pi q_h\|_{L^2(\tau)}&\leq C |\sum_{e\subset \partial \tau\backslash (\Gamma_D\cup \Gamma)}\int_e [q_h]\;ds|\\
&\leq C \Big(\sum_{e\subset \partial \tau\backslash (\Gamma_D\cup \Gamma)}\int_e |[q_h]|^{3/2}\;ds\Big)^{2/3}\Big(\sum_{e\subset \partial \tau\backslash (\Gamma_D\cup \Gamma)}\int_e 1^3\;ds\Big)^{1/3}\\
&\leq C h_e^{1/3}\Big(\sum_{e\subset \partial \tau\backslash (\Gamma_D\cup \Gamma)}\int_e |[q_h]|^{3/2}\;ds\Big)^{2/3}.
\end{align*}
Finally, taking $p=2, q=3/2$ in Lemma~\ref{lemma:L2g} leads to
\begin{align*}
\|q_h\|_{L^2(\Omega_D)}&\leq \|q_h-\pi q_h\|_{L^2(\Omega_D)}+\|\pi q_h\|_{L^2(\Omega_D)}\\
&\leq C\Big(\Big(\sum_{\tau\in \mathcal{T}_{h_D}}\|q_h-\Pi_\tau q_h\|_{L^2(\tau)}^2\Big)^{1/2}+\Big(\sum_{\tau\in \mathcal{T}_{h_D}}\|\Pi_\tau q_h-\pi q_h\|_{L^2(\tau)}^2\Big)^{1/2}\\
&\;+\Big(\sum_{\tau\in \mathcal{T}_{h_D}}\|\nabla (\pi q_h)\|_{L^{3/2}(\tau)}^{3/2}\Big)^{2/3}\Big)\\
&\leq C \|q_h\|_{Z_D}.
\end{align*}

\end{proof}

\begin{theorem}\label{thm:unique-con}
(existence and uniqueness of the continuous problem)

The continuous problem \eqref{eq:weak} has a unique solution $(\bm{\sigma}_S, \bm{u}_S,p_S)\in L^2(\Omega_S)^{2\times 2}\times U_S^0\times L^2(\Omega_S)$ and $(\bm{u}_D,p_D)\in L^3(\Omega_D)^2\times \Sigma_0$.
\end{theorem}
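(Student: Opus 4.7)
The plan is to recast \eqref{eq:weak} as a nonlinear saddle-point problem for a coupled velocity and pressure, and then apply a Babuska--Brezzi-type theory for monotone operators in the spirit of \cite{Girault08}, using the three lemmas on $A$ (monotonicity, coercivity, hemicontinuity) stated above.

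First I would eliminate the stress $\bm{\sigma}_S$. Testing the second equation of \eqref{eq:weak} with arbitrary $\bm{\tau}_S \in L^2(\Omega_S)^{2\times 2}$ (and setting $\bm{v}_D=0$) yields pointwise $\bm{\sigma}_S = -\nu\, \nabla \bm{u}_S$; since $\bm{u}_S\in U_S^0$, this lies in $L^2(\Omega_S)^{2\times 2}$. Substituting back, the weak problem reduces to: find $(\bm{u}_S,\bm{u}_D)\in \mathbf{W}:=U_S^0\times L^3(\Omega_D)^2$ and $(p_S,p_D)\in Q:=L^2(\Omega_S)\times \Sigma_0$ such that
\begin{align*}
\mathcal{A}\big((\bm{u}_S,\bm{u}_D);(\bm{v}_S,\bm{v}_D)\big) + \mathcal{B}\big((\bm{v}_S,\bm{v}_D);(p_S,p_D)\big) &= F(\bm{v}_S,\bm{v}_D),\\
\mathcal{B}\big((\bm{u}_S,\bm{u}_D);(q_S,q_D)\big) &= G(q_S,q_D),
\end{align*}
where $\mathcal{A}$ collects the $\nu(\nabla\bm{u}_S,\nabla\bm{v}_S)_{\Omega_S}$ term, the BJS term $G\langle \bm{u}_S\cdot\bm{t},\bm{v}_S\cdot\bm{t}\rangle_\Gamma$, and the nonlinear term $\int_{\Omega_D} A(\bm{u}_D)\cdot \bm{v}_D$, while $\mathcal{B}$ collects the divergence couplings $-(p_S,\nabla\cdot\bm{v}_S)_{\Omega_S}$, $-(\bm{u}_D,\nabla q_D)_{\Omega_D}$ and the interface terms involving $p_D$ and $\bm{u}_S\cdot\bm{n}_S$ on $\Gamma$.

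Next I would verify the two inf-sup conditions for $\mathcal{B}$ on $\mathbf{W}\times Q$. For the Stokes component this is the standard continuous inf-sup for $(\nabla\cdot,\cdot)$ from $H^1_{\Gamma_S}\to L^2(\Omega_S)$, with a correction to account for the interface term $\langle \bm{u}_S\cdot\bm{n}_S,q_D\rangle_\Gamma$: given $(q_S,q_D)\in Q$, one constructs a Bogovskii-type lift in $\Omega_S$ whose normal trace on $\Gamma$ matches a chosen $H^{-1/2}$-extension of $q_D|_\Gamma$, and in parallel uses the gradient pairing $-(\bm{v}_D,\nabla q_D)_{\Omega_D}$ which yields a non-trivial lower bound in the $L^3$--$W^{1,3/2}$ duality (standard: take $\bm{v}_D = -|\nabla q_D|^{-1/2}\nabla q_D$ suitably regularized). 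The pairing between $L^3(\Omega_D)^2$ and $W^{1,3/2}(\Omega_D)$ is well-defined since $W^{1,3/2}\hookrightarrow L^3$ in $\mathbb{R}^2$. The interface pairings are well-defined using the trace $W^{1,3/2}(\Omega_D)\to W^{1/2,3/2}(\Gamma)\hookrightarrow L^3(\Gamma)$ and $H^1(\Omega_S)\to H^{1/2}(\Gamma)\hookrightarrow L^p(\Gamma)$ for all finite $p$, which duality-pairs with $L^3(\Gamma)$.

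I would then analyze $\mathcal{A}$ on the kernel $\mathbf{K}=\{(\bm{v}_S,\bm{v}_D)\in\mathbf{W}:\mathcal{B}((\bm{v}_S,\bm{v}_D);(q_S,q_D))=0\ \forall (q_S,q_D)\}$. On $\mathbf{K}$ one has $\nabla\cdot \bm{v}_S=0$ in $\Omega_S$ and a weak incompressibility coupling $\nabla\cdot \bm{v}_D=0$ in $\Omega_D$ together with $\bm{v}_S\cdot\bm{n}_S=\bm{v}_D\cdot\bm{n}_S$ on $\Gamma$. Coercivity of $\mathcal{A}$ on $\mathbf{K}$ comes from: Korn/Poincar\'e on $U_S^0$ for the $\nu\|\nabla \bm{u}_S\|^2$ term, non-negativity of the BJS boundary term $G\|\bm{u}_S\cdot\bm{t}\|^2_\Gamma$, and Lemmas \ref{lemma:coercivity}--\ref{lemma:monotone} for $A$ which give $\int_{\Omega_D} A(\bm{u}_D)\cdot\bm{u}_D \to \infty$ as $\|\bm{u}_D\|_{L^3(\Omega_D)}\to\infty$. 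The three properties of $A$ (monotone, hemicontinuous, coercive) together with the fact that $\mathcal{A}$ is linear in its Stokes part make $\mathcal{A}:\mathbf{K}\to\mathbf{K}^*$ a monotone, hemicontinuous, coercive operator, so Browder--Minty delivers a solution $(\bm{u}_S,\bm{u}_D)\in \mathbf{K}$. The inf-sup condition on $\mathcal{B}$ then recovers a unique $(p_S,p_D)\in Q$, and $\bm{\sigma}_S=-\nu\nabla\bm{u}_S$ supplies the stress.

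Uniqueness I would prove by subtracting two solutions: testing the difference with itself kills the linear symmetric parts positively and leaves $\int_{\Omega_D}(A(\bm{u}_D^{(1)})-A(\bm{u}_D^{(2)}))\cdot(\bm{u}_D^{(1)}-\bm{u}_D^{(2)})\ge \tfrac{\mu}{\rho}\lambda_{\min}\|\bm{u}_D^{(1)}-\bm{u}_D^{(2)}\|_{L^2}^2$ by Lemma~\ref{lemma:monotone}, so that together with Korn/Poincar\'e in $\Omega_S$ and the BJS term one gets $\bm{u}_S^{(1)}=\bm{u}_S^{(2)}$ and $\bm{u}_D^{(1)}=\bm{u}_D^{(2)}$; uniqueness of pressures then follows from the inf-sup conditions.

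The main obstacle I anticipate is the correct functional-analytic handling of the interface terms in the mixed norms: making the pairings $\langle p_D,\bm{v}_S\cdot\bm{n}_S\rangle_\Gamma$ and $\langle \bm{u}_S\cdot\bm{n}_S,q_D\rangle_\Gamma$ consistent between the Hilbert-space Stokes side and the $L^3$--$W^{1,3/2}$ Darcy--Forchheimer side, and constructing the Bogovskii lift that simultaneously matches divergence in $\Omega_S$ and a given normal trace on $\Gamma$, so that the inf-sup condition for $\mathcal{B}$ holds on the full product space $Q$. Once these technical pieces are in place, the nonlinear saddle-point machinery and the properties of $A$ stated above conclude the argument.
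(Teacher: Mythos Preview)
Your approach is correct in spirit and would work, but it differs substantially from the paper's route. You eliminate $\bm{\sigma}_S$ first and organize the problem as a velocity--pressure saddle point with unknowns $(\bm{u}_S,\bm{u}_D)\in U_S^0\times L^3(\Omega_D)^2$ and multipliers $(p_S,p_D)\in L^2(\Omega_S)\times\Sigma_0$, then invoke Browder--Minty on the kernel of a combined bilinear form $\mathcal{B}$. The paper instead keeps $\bm{\sigma}_S$ as an independent variable and uses a different decomposition: it first uses the standard Stokes inf-sup to pass to the divergence-free subspace $X\subset U_S^0$, then, for each $S=(\bm{u}_1,p_1)\in X\times\Sigma_0$, solves the \emph{second} equation of \eqref{eq:weak} uniquely for $(\bm{\sigma}(S),\bm{u}(S))\in L^2(\Omega_S)^{2\times2}\times L^3(\Omega_D)^2$ (a decoupled problem where the $A$--part is handled directly by Lemmas~\ref{lemma:monotone}--\ref{lemma:hemicontinuity} and the $\bm{\sigma}$--part is trivial). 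Substituting this solution map back into the first equation defines a nonlinear operator $\mathbb{T}$ on $X\times\Sigma_0$, whose bijectivity is then established following \cite{Almonacid19}, using only the elementary individual inf-sups $\inf_q\sup_{\bm v}\frac{-(\nabla q,\bm v)}{\|\nabla q\|_{L^{3/2}}\|\bm v\|_{L^3}}$ and $\inf_{\bm v}\sup_{\bm\tau}\frac{-(\nabla\bm v,\bm\tau)}{\|\nabla\bm v\|_{L^2}\|\bm\tau\|_{L^2}}$.

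The practical difference is exactly the obstacle you flag at the end: your route requires proving a \emph{combined} inf-sup for $\mathcal{B}$ on the product $Q=L^2(\Omega_S)\times\Sigma_0$, where the interface terms $\langle q_D,\bm{v}_S\cdot\bm{n}_S\rangle_\Gamma$ couple the Hilbert-space Stokes block to the $L^3$--$W^{1,3/2}$ Darcy--Forchheimer block, and this needs a Bogovski\u{\i}-type construction with prescribed normal trace on $\Gamma$. The paper's grouping $(\bm{u}_S,p_D)$ versus $(\bm{\sigma}_S,\bm{u}_D)$ sidesteps this entirely: the interface couplings $\langle p_D,\bm{v}_S\cdot\bm{n}_S\rangle_\Gamma-\langle\bm{u}_S\cdot\bm{n}_S,q_D\rangle_\Gamma$ become part of the skew-symmetric operator $\mathbb{C}$ acting on $X\times\Sigma_0$, and no mixed-exponent inf-sup with interface coupling is needed. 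Your approach is more in the style of the primal Stokes--Darcy analyses (e.g.\ \cite{Layton03}); the paper's is tailored to the fully-mixed setting and offloads the operator-theoretic verification to \cite{Almonacid19}.
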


\begin{proof}
We define
\begin{align*}
X=\{\bm{v}_S\in U_S^0,\; (\nabla \cdot \bm{v}_S,q_S)_{\Omega_S}=0\quad \forall q_S\in L^2(\Omega_S)\}.
\end{align*}
Since $(\nabla \cdot \bm{v}_S,q_S)_{\Omega_S}$ satisfies the following inf-sup condition (cf. \cite{Braess07})
\begin{align*}
\inf_{q_S\in L^2(\Omega_S)}\sup_{\bm{v}_S\in H^1(\Omega_S)^2}\frac{(\nabla \cdot \bm{v}_S,q_S)_{\Omega_S}}{\|q_S\|_{L^2(\Omega_S)}\|\bm{v}_S\|_{H^1(\Omega_S)}}\geq C.
\end{align*}
Problem \eqref{eq:weak} is equivalent to: find $(\bm{\sigma}_{S},\bm{u}_{S},p_{S})\in  L^2(\Omega_S)^{2\times 2}\times X\times L^2(\Omega_S)$ and $(\bm{u}_{D},p_{D})\in L^3(\Omega_D)^2\times \Sigma_0$ such that
\begin{equation}
\begin{split}
&-(\bm{\sigma}_S,\nabla \bm{v}_S)_{\Omega_S}+\langle p_D, \bm{v}_S\cdot\bm{n}_{S}\rangle_\Gamma+G\langle \bm{u}_S\cdot\bm{t}, \bm{v}_S\cdot\bm{t}\rangle_\Gamma-(\bm{u}_D,\nabla q_D)_{\Omega_D}\\
&-\langle \bm{u}_S\cdot\bm{n}_{S},q_D\rangle_\Gamma=
(f_D,q_D)_{\Omega_D}+(\bm{f}_S,\bm{v}_S)_{\Omega_S},\\
&(A(\bm{u}_D),\bm{v}_D)_{\Omega_D}+(\nabla p_D,\bm{v}_D)_{\Omega_D}+\nu^{-1}(\bm{\sigma}_S,\bm{\tau}_S)_{\Omega_S}+(\nabla \bm{u}_S, \bm{\tau}_S)_{\Omega_S}=(\bm{g},\bm{v}_D)_{\Omega_D},\\
&\forall (\bm{\tau}_S,\bm{v}_S)\in L^2(\Omega_S)^{2\times 2}\times X,\quad (\bm{v}_D,q_D)\in L^3(\Omega_D)^2\times \Sigma_0.
\end{split}
\label{eq:new-continuous}
\end{equation}

In order to show the unique solvability of \eqref{eq:new-continuous}, we first need to show the well-posedness of the following problem: given $(\bm{u}_1,p_1)\in X\times \Sigma_0$, find $(\bm{\sigma}_0,\bm{u}_0)\in L^2(\Omega_S)^{2\times 2}\times L^3(\Omega_D)^2$ such that
\begin{equation}
\begin{split}
&\nu^{-1}(\bm{\sigma}_0,\bm{\tau}_S)+(A(\bm{u}_0),\bm{v}_D)=(\bm{g}_D,\bm{v}_D)
-(\nabla p_1,\bm{v}_D)_{\Omega_D}-(\nabla \bm{u}_1,\bm{\tau}_S)_{\Omega_S}\\
&\quad \forall (\bm{\tau}_S,\bm{v}_D)\in L^2(\Omega_S)^{2\times 2}\times L^3(\Omega_D)^2.
\end{split}
\label{eq:11}
\end{equation}

Since $A$ satisfies  Lemma~\ref{lemma:monotone}-Lemma~\ref{lemma:hemicontinuity}, for given $(\bm{u}_1,p_1)\in X\times \Sigma_0$, there exists a unique $(\bm{\sigma}_0,\bm{u}_0)\in L^2(\Omega_S)^{2\times 2}\times L^3(\Omega_D)^2$ of \eqref{eq:11}. From this fact, for each $S=(\bm{u}_1,p_1)\in X\times \Sigma_0$, it is now possible to define $(\bm{\sigma}(S),\bm{u}(S))$ as the unique element in $ L^2(\Omega_S)^{2\times 2}\times L^3(\Omega_D)^2$ such that
\begin{align*}
&\nu^{-1}(\bm{\sigma}(S),\bm{\tau}_S)+(A(\bm{u}(S)),\bm{v}_D)=(\bm{g}_D,\bm{v}_D)
-(\nabla p_1,\bm{v}_D)_{\Omega_D}-(\nabla \bm{u}_1,\bm{\tau}_S)_{\Omega_S},\\
&\forall (\bm{\tau}_S,\bm{v}_D)\in L^2(\Omega_S)^{2\times 2}\times L^3(\Omega_D)^2.
\end{align*}
Then problem \eqref{eq:new-continuous} is equivalent to: find $S=(\bm{u}_{S},p_{D})\in X\times \Sigma_0$ such that
\begin{align*}
&[\mathbb{T}(S),\beta]:=-( \bm{\sigma}_S(S),\nabla \bm{v}_S)_{\Omega_S}-(\bm{u}_D(S),\nabla q_D)_{\Omega_D}+\langle p_{D},\bm{v}_S\cdot\bm{n}_S\rangle_\Gamma+G\langle \bm{u}_{S}\cdot\bm{t},\bm{v}_S\cdot\bm{t}\rangle_\Gamma\\
&\;-\langle \bm{u}_{S}\cdot\bm{n}_S,q_D\rangle_\Gamma=(f_D,q_D)_{\Omega_D}
+(\bm{f}_S,\bm{v}_S)_{\Omega_S},\\
&\forall \beta=(\bm{v}_S,q_D)\in  X\times \Sigma_0.
\end{align*}
In addition, we define a operator $\mathbb{C}$ such that $[\mathbb{C}(S),\beta]:=\langle p_{D},\bm{v}_S\cdot\bm{n}\rangle_\Gamma+G\langle \bm{u}_{S}\cdot\bm{t},\bm{v}_S\cdot\bm{t}\rangle_\Gamma-\langle \bm{u}_{S}\cdot\bm{n}_S,q_D\rangle_\Gamma\;\forall\beta=(\bm{v}_S,q_D)\in  X\times \Sigma_0$. It is easy to see that $\mathbb{C}$ is positive semidefinite, furthermore, the following inf-sup conditions hold (cf. \cite{Girault08})
\begin{align*}
\inf_{q\in W^{1,3/2}(\Omega_D)}\sup_{\bm{v}\in L^3(\Omega_D)^2}\frac{-(\nabla q,\bm{v})_{\Omega_D}}{\|\nabla q\|_{L^{3/2}(\Omega_D)}\|\bm{v}\|_{L^3(\Omega_D)}}&\geq C,\\
\inf_{\bm{v}\in H^1(\Omega_S)^2}\sup_{\bm{\tau}\in L^2(\Omega_S)^{2\times 2}}\frac{-(\nabla \bm{v},\bm{\tau})_{\Omega_S}}{\|\nabla \bm{v}\|_{L^2(\Omega_S)}\|\bm{\tau}\|_{L^2(\Omega_S)}}&\geq C.
\end{align*}
Then proceeding as in \cite{Almonacid19}, we can show that $\mathbb{T}$ is injective, continuous, monotone, bounded and coercive, which means that $\mathbb{T}$ is bijective.

Therefore, there exists a unique solution to \eqref{eq:weak}.
\end{proof}

\begin{theorem}(existence and uniqueness of the discrete problem).
The discrete problem \eqref{eq:discrete1} admits a unique solution $(\bm{\sigma}_{S,h},\bm{u}_{S,h},p_{S,h})\in [\bm{V}_{h_S}]^2\times [U_{h_S}]^2\times P_h$ and $(\bm{u}_{D,h},p_{D,h})\in \bm{V}_{h_D}\times U_{h_D}$. Moreover, the following estimates hold
\begin{align*}
&\|\bm{\sigma}_{S,h}\|_{L^2(\Omega_S)}^2+\|\bm{u}_{S,h}\|_{L^2(\Omega_S)}^2
+\|\bm{u}_{D,h}\|_{L^2(\Omega_D)}^2+\|\bm{u}_{D,h}\|_{L^3(\Omega_D)}^3\\
&\leq C\Big(\|f_D\|_{L^2(\Omega_D)}^2+\|f_D\|_{L^2(\Omega_D)}^3
+\|\bm{g}_D\|_{L^2(\Omega_D)}^2+\|\bm{f}_S\|_{L^2(\Omega_S)}^2\Big),\\
&\|p_{D,h}\|_{Z_D}+\|p_{D,h}\|_{L^2(\Omega_D)}+\|p_{S,h}\|_{L^2(\Omega_S)}\leq C \Big(\|f_D\|_{L^2(\Omega_D)}+\|f_D\|_{L^2(\Omega_D)}^2+\|\bm{g}_D\|_{L^2(\Omega_D)}\\
&\hspace{7cm}+\|\bm{g}_D\|_{L^2(\Omega_D)}^2+\|\bm{f}_S\|_{L^2(\Omega_S)}
+\|\bm{f}_S\|_{L^2(\Omega_S)}^2\Big).
\end{align*}
\end{theorem}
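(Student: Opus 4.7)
The plan is to mirror the argument used for the continuous case in Theorem~\ref{thm:unique-con}, with the discrete inf-sup conditions replacing their continuous counterparts, and then to derive the a priori bounds by choosing the solution itself as test function and bootstrapping against Lemmas~\ref{lemma:trace} and \ref{lemma:poincarev}. For existence and uniqueness, I would first introduce the discrete divergence-free subspace
\begin{equation*}
X_h = \{\bm{v}_S \in [U_{h_S}]^2 : b_S(\bm{v}_S, q_S) = 0 \ \forall q_S \in P_h\},
\end{equation*}
and recover $p_{S,h}$ afterwards through the inf-sup condition \eqref{inf-sup-bh}. Similarly, \eqref{eq:inf-supaS}--\eqref{eq:inf-supaS2} together with the second equation of \eqref{eq:discrete1} allow one to eliminate $\bm{\sigma}_{S,h}$, leaving a purely nonlinear reduced problem in $(\bm{u}_{S,h},\bm{u}_{D,h},p_{D,h})$. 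Combining Lemmas~\ref{lemma:monotone}--\ref{lemma:hemicontinuity}, the positive semidefiniteness of the interface coupling already observed in the continuous proof, and the inf-sup conditions of Lemmas~\ref{lemma:infsupa}--\ref{lemma:insup3}, the Browder-Minty theorem applies verbatim to give the unique discrete solution.

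For the stability estimates, I would test the three equations of \eqref{eq:discrete1} with $(\bm{\tau}_S,\bm{v}_S,q_S)=(\bm{\sigma}_{S,h},\bm{u}_{S,h},p_{S,h})$ and $(\bm{v}_D,q_D)=(\bm{u}_{D,h},p_{D,h})$ and add them. The adjoint identities \eqref{eq:discreteaD} force the bilinear cross-coupling contributions (including the two interface terms involving $p_{D,h}$) to cancel, while the third equation kills $b_S^*(p_{S,h},\bm{u}_{S,h})$. What remains is the identity
\begin{equation*}
\nu^{-1}\|\bm{\sigma}_{S,h}\|_{L^2(\Omega_S)}^2 + (A(\bm{u}_{D,h}),\bm{u}_{D,h})_{\Omega_D} + G\|\bm{u}_{S,h}\cdot\bm{t}\|_{L^2(\Gamma)}^2 = (f_D,p_{D,h})_{\Omega_D} + (\bm{f}_S,\bm{u}_{S,h})_{\Omega_S} + (\bm{g}_D,\bm{u}_{D,h})_{\Omega_D}.
\end{equation*}
Lemma~\ref{lemma:monotone} with $\bm{u}_\ell=\bm{v}=\bm{0}$ bounds $(A(\bm{u}_{D,h}),\bm{u}_{D,h})_{\Omega_D}$ from below by $c_1\|\bm{u}_{D,h}\|_{L^2(\Omega_D)}^2$, and the definition \eqref{A} supplies the additional $c_2\|\bm{u}_{D,h}\|_{L^3(\Omega_D)}^3$.

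To close the energy identity, I need to control the two quantities $\|\bm{u}_{S,h}\|_{L^2(\Omega_S)}$ and $\|p_{D,h}\|_{L^2(\Omega_D)}$ appearing implicitly on the right-hand side. The former is read off from the second equation of \eqref{eq:discrete1} via the inf-sup condition \eqref{eq:inf-supaS} (choosing the test function $\bm{v}_D=\bm{0}$), giving $\|\bm{u}_{S,h}\|_{L^2(\Omega_S)}\leq C\|\bm{\sigma}_{S,h}\|_{L^2(\Omega_S)}$. The latter follows by applying the inf-sup condition of Lemma~\ref{lemma:insup3} to the same equation with $\bm{\tau}_S=\bm{0}$, combined with the continuity estimate \eqref{continuity}, to obtain $\|p_{D,h}\|_{Z_D}\leq C(\|\bm{u}_{D,h}\|_{L^{3/2}(\Omega_D)}+\|\bm{u}_{D,h}\|_{L^3(\Omega_D)}^2+\|\bm{g}_D\|_{L^{3/2}(\Omega_D)})$, whereupon Lemma~\ref{lemma:poincarev} upgrades this into an $L^2(\Omega_D)$-bound on $p_{D,h}$.

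Substituting these two bounds back into the energy identity and applying Young's inequality with conjugate exponents $(3,3/2)$ to the product $\|f_D\|_{L^2}\|\bm{u}_{D,h}\|_{L^3(\Omega_D)}^2$ allows one to absorb the resulting $\|\bm{u}_{D,h}\|_{L^3}^3$ contribution into the left-hand side, and the cubic source contribution $C\|f_D\|_{L^2}^3$ so produced accounts for the higher-power source terms appearing in the stated inequality. The bound on $\|p_{S,h}\|_{L^2(\Omega_S)}$ is then obtained from the first equation of \eqref{eq:discrete1} via \eqref{inf-sup-bh}, with the interface terms $\langle p_{D,h},\bm{v}_S\cdot\bm{n}_S\rangle_\Gamma$ and $G\langle \bm{u}_{S,h}\cdot\bm{t},\bm{v}_S\cdot\bm{t}\rangle_\Gamma$ controlled by Corollary~\ref{coro:trace} and the already-established $L^2$-bound on $p_{D,h}$. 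The principal obstacle is precisely the bootstrap closure between the nonlinear $L^3$ velocity norm and the $L^2$ pressure norm in the Darcy-Forchheimer region, and it is at that step that the discrete trace inequality (Lemma~\ref{lemma:trace}) and the generalized Poincaré-Friedrichs inequality (Lemma~\ref{lemma:poincarev}), tuned to the non-standard $Z_D$-norm, become indispensable.
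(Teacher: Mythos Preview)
Your proposal follows essentially the same route as the paper: existence and uniqueness by mirroring the continuous argument via discrete inf--sup conditions and monotone operator theory; the energy identity from testing with the solution; control of $\|\bm{u}_{S,h}\|$ and $\|p_{D,h}\|_{Z_D}$ via inf--sup applied to the second equation; the Poincar\'e--Friedrichs lemma to pass from $Z_D$ to $L^2$; Young's inequality to absorb the cubic term; and finally \eqref{inf-sup-bh} for $p_{S,h}$.

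Two small slips are worth correcting. First, the inf--sup condition that yields $\|\bm{u}_{S,h}\|\leq C\|\bm{\sigma}_{S,h}\|_{L^2(\Omega_S)}$ is \eqref{eq:inf-supaS2}, not \eqref{eq:inf-supaS}: the former has the infimum over $\bm{v}_S$ and controls $\|\bm{v}_S\|_h$, which is what you need (followed by the discrete Poincar\'e--Friedrichs inequality for the $L^2$ bound). Second, in the estimate for $\|p_{S,h}\|_{L^2(\Omega_S)}$, the interface term $\langle p_{D,h},\bm{v}_S\cdot\bm{n}_S\rangle_\Gamma$ cannot be handled by an $L^2(\Omega_D)$ bound on $p_{D,h}$ alone: you need $\|p_{D,h}\|_{L^2(\Gamma)}$, and it is precisely Lemma~\ref{lemma:trace} (not Lemma~\ref{lemma:poincarev}) that delivers $\|p_{D,h}\|_{L^2(\Gamma)}\leq C\|p_{D,h}\|_{Z_D}$, paired with Corollary~\ref{coro:trace} for the test function. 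Your closing sentence places Lemma~\ref{lemma:trace} at the bootstrap step, but its indispensable role is here, in bounding $p_{S,h}$.
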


\begin{proof}

By the discrete inf-sup condition (cf. Lemma~\ref{lemma:infsupa}, \eqref{inf-sup-bh} and \eqref{eq:inf-supaS}) and Lemma~\ref{lemma:monotone}-Lemma~\ref{lemma:hemicontinuity}, the existence and uniqueness of \eqref{eq:discrete1} can be proved similarly to Theorem~\ref{thm:unique-con}. We omit it for simplicity.

On the other hand, taking $\bm{\tau}_S=\bm{\sigma}_{S,h}$, $\bm{v}_S=\bm{u}_{S,h}$ and $\bm{v}_D=\bm{u}_{D,h}$, $q_D=p_{D,h}$ in \eqref{eq:discrete1} and adding the resulting equations yield
\begin{align*}
\nu^{-1}\|\bm{\sigma}_{S,h}\|_{L^2(\Omega_S)}^2+G\langle \bm{u}_{S,h}\cdot\bm{t},\bm{u}_{S,h}\cdot\bm{t}\rangle_\Gamma +(A(\bm{u}_{D,h}),\bm{u}_{D,h})_{\Omega_D}
=(f_D,p_{D,h})_{\Omega_D}+(\bm{f}_S,\bm{u}_{S,h})_{\Omega_S}.
\end{align*}
The definition of $A$ (cf. (\ref{A})) gives
\begin{align*}
(A(\bm{u}_{D,h}), \bm{u}_{D,h})_{\Omega_D}\geq C\Big(\|\bm{u}_{D,h}\|_{L^2(\Omega_D)}^2+\|\bm{u}_{D,h}\|_{L^3(\Omega_D)}^3\Big).
\end{align*}
The discrete inf-sup condition \eqref{eq:inf-supaS2} implies
\begin{align*}
\|\bm{u}_{S,h}\|_{h}\leq C \sup_{\bm{\tau}_S\in [\bm{V}_{h_S}]^2 }\frac{a_S^*(\bm{u}_{S,h},\bm{\tau}_S)}{\|\bm{\tau}_S\|_{L^2(\Omega_S)}}
=C\sup_{\bm{\tau}_S\in [\bm{V}_{h_S}]^2 }\frac{(\bm{\sigma}_{S,h},\bm{\tau}_S)_{\Omega_S}}{\|\bm{\tau}_S\|_{L^2(\Omega_S)}}\leq C \|\bm{\sigma}_{S,h}\|_{L^2(\Omega_S)}.
\end{align*}
Lemma~\ref{lemma:insup3} and the adjoint property \eqref{eq:discreteaD} imply the existence of $p_{D,h}$ satisfying
\begin{equation}
\begin{split}
\|p_{D,h}\|_{Z_D}&\leq C \sup_{\bm{v}_D\in \bm{V}_{h_D}}\frac{a_D^*(p_{D,h}, \bm{v}_D)}{\|\bm{v}_D\|_{L^3(\Omega_D)}}
=C\sup_{\bm{v}_D\in \bm{V}_{h_D}}\frac{(A(\bm{u}_{D,h}),\bm{v}_D)_{\Omega_D}-(\bm{g}_D,\bm{v}_D)_{\Omega_D}}
{\|\bm{v}_D\|_{L^3(\Omega_D)}}\\
&\leq C\Big(\|\bm{u}_{D,h}\|_{L^2(\Omega_D)}+\|\bm{u}_{D,h}\|_{L^3(\Omega_D)}^2
+\|\bm{g}_D\|_{L^2(\Omega_D)}\Big).
\end{split}
\label{phZ}
\end{equation}

Thus, the preceding arguments, Lemma~\ref{lemma:poincarev} and the discrete Poincar\'{e}-Friedrichs inequality (cf. \cite{Brenner}) imply that
\begin{align*}
&\|\bm{\sigma}_{S,h}\|_{L^2(\Omega_S)}^2+\|\bm{u}_{D,h}\|_{L^2(\Omega_D)}^2
+\|\bm{u}_{D,h}\|_{L^3(\Omega_D)}^3\\
&\leq C(\|f_D\|_{L^2(\Omega_D)}\|p_{D,h}\|_{L^2(\Omega_D)}+
\|\bm{f}_S\|_{L^2(\Omega_S)}\|\bm{u}_{S,h}\|_{L^2(\Omega_S)})\\
&\leq  C(\|f_D\|_{L^2(\Omega_D)}\|p_{D,h}\|_{Z_D}
+\|\bm{f}_S\|_{L^2(\Omega_S)}\|\bm{u}_{S,h}\|_{L^2(\Omega_S)})\\
&\leq C \Big(\|f_D\|_{L^2(\Omega_D)}(\|\bm{u}_{D,h}\|_{L^2(\Omega_D)}
+\|\bm{u}_{D,h}\|_{L^3(\Omega_D)}^2
+\|\bm{g}_D\|_{L^2(\Omega_D)})
+\|\bm{f}_S\|_{L^2(\Omega_S)}\|\bm{\sigma}_{S,h}\|_{L^2(\Omega_S)}\Big).
\end{align*}
Therefore
\begin{align*}
&\|\bm{\sigma}_{S,h}\|_{L^2(\Omega_S)}^2+\|\bm{u}_{D,h}\|_{L^2(\Omega_D)}^2
+\|\bm{u}_{D,h}\|_{L^3(\Omega_D)}^3\\
&\leq C \Big(\|f_D\|_{L^2(\Omega_D)}^2+\|f_D\|_{L^2(\Omega_D)}^3
+\|\bm{g}_D\|_{L^2(\Omega_D)}^2+\|\bm{f}_S\|_{L^2(\Omega_S)}^2\Big).
\end{align*}



Finally, we have from Lemma~\ref{lemma:poincarev} and \eqref{phZ}
\begin{align*}
&\|p_{D,h}\|_{L^2(\Omega_D)}\leq C\|p_{D,h}\|_{Z_D}\leq  C\Big(\|\bm{u}_{D,h}\|_{L^2(\Omega_D)}+\|\bm{u}_{D,h}\|_{L^3(\Omega_D)}^2
+\|\bm{g}_D\|_{L^2(\Omega_D)}\Big)\\
&\leq C \Big( \|f_D\|_{L^2(\Omega_D)}+\|f_D\|_{L^2(\Omega_D)}^2+\|\bm{g}_D\|_{L^2(\Omega_D)}
+\|\bm{g}_D\|_{L^2(\Omega_D)}^2+\|\bm{f}_S\|_{L^2(\Omega_S)}
+\|\bm{f}_S\|_{L^2(\Omega_S)}^2\Big).
\end{align*}

The norm equivalence \eqref{scaling}, the discrete inf-sup condition \eqref{inf-sup-bh}, the first equation of \eqref{eq:discrete1}, Lemma~\ref{lemma:trace}, and Corollary~\ref{coro:trace} imply
\begin{align*}
&\|p_{S,h}\|_{L^2(\Omega_S)}\leq C \sup_{\bm{v}_S\in [U_{h_S}]^2} \frac{b_S^*(p_{S,h},\bm{v}_S)}{\|\bm{v}_S\|_h}\\
&=C\sup_{\bm{v}_S\in [U_{h_S}]^2 }\frac{(\bm{f}_S,\bm{v}_S)_{\Omega_S}-a_S(\bm{\sigma}_{S,h},\bm{v}_S)-\langle p_{D,h},\bm{v}_S\cdot\bm{n}_{S}\rangle_\Gamma-G \langle \bm{u}_{S,h}\cdot\bm{t},\bm{v}_S\cdot\bm{t}\rangle_\Gamma}{\|\bm{v}_S\|_h}\\
&\leq C \Big(\|\bm{f}_S\|_{L^2(\Omega_S)}+\|\bm{\sigma}_{S,h}\|_{L^2(\Omega_S)}
+\|p_{D,h}\|_{Z_D}+\|\bm{u}_{S,h}\|_{h}\Big)\\
&\leq C \Big(\|f_D\|_{L^2(\Omega_D)}+\|f_D\|_{L^2(\Omega_D)}^{2}+\|\bm{g}_D\|_{L^2(\Omega_D)}
+\|\bm{g}_D\|_{L^2(\Omega_D)}^{2}+\|\bm{f}_S\|_{L^2(\Omega_S)}
+\|\bm{f}_S\|_{L^2(\Omega_S)}^{2}\Big),
\end{align*}
which completes the proof.

\end{proof}

The rest of this section presents the convergence estimates for all the variables. To this end, we first write down the following error equations by performing integration by parts on \eqref{eq:discrete1}
\begin{equation}
\begin{split}
&a_S(\bm{\sigma}_S-\bm{\sigma}_{S,h},\bm{v}_S)+b_S^*(p_S-p_{S,h},\bm{v}_S)
+a_D(\bm{u}_D-\bm{u}_{D,h},q_D)+\langle p_D-p_{D,h},\bm{v}_S\cdot\bm{n}_{S} \rangle_\Gamma\\
&\;+G\langle (\bm{u}_S-\bm{u}_{S,h})\cdot\bm{t},\bm{v}_S\cdot\bm{t}\rangle_\Gamma
-\langle(\bm{u}_S-\bm{u}_{S,h})\cdot\bm{n}_{S},q_D\rangle_\Gamma
=0,\\
&\nu^{-1}(\bm{\sigma}_S-\bm{\sigma}_{S,h},\bm{\tau}_S)_{\Omega_S}
-a_S^*(\bm{u}_S-\bm{u}_{S,h},\bm{\tau}_S)+
(A(\bm{u}_D)-A(\bm{u}_{D,h}),\bm{v}_D)_{\Omega_D}\\
&\;-a_D^*(p_D-p_{D,h},\bm{v}_D)=0, \\
&b_S(\bm{u}_S-\bm{u}_{S,h},q_S)=0,\\
& \forall (\bm{\tau}_S,\bm{v}_S,q_S)\in [\bm{V}_{h_S}]^2\times [U_{h_S}]^2\times P_h, \; (\bm{v}_D,q_D)\in \bm{V}_{h_D}\times U_{h_D}.
\end{split}
\label{eq:error}
\end{equation}

%
%
%
%

The main result of this section can be stated as follows.
\begin{theorem}\label{thm:con1}
Assume that $\bm{u}_D\in W^{\alpha, 4}(\Omega_D)^2$,$p_D\in W^{\alpha+1,3/2}(\Omega_D)$, $\bm{u}_S\in H^{\alpha+1}(\Omega_S)^2$, $p_S\in H^\alpha(\Omega_S)$, then we can obtain the following error estimates
\begin{align*}
&\|\bm{\sigma}_S-\bm{\sigma}_{S,h}\|_{L^2(\Omega_D)}
+\|\bm{u}_D-\bm{u}_{D,h}\|_{L^2(\Omega_D)}\\
&\quad \leq C h^{\min\{k+1,\alpha\}}\Big(\|\bm{u}_S\|_{H^{\min\{k+2,\alpha+1\}}(\Omega_S)}
+\|p_S\|_{H^{\min\{k+1,\alpha\}}(\Omega_S)}
+\|\bm{u}_D\|_{W^{\min\{k+1,\alpha\},4}(\Omega_D)}\Big),\\
&\|p_S-p_{S,h}\|_{L^2(\Omega_S)}\\
&\quad\leq C h^{\min\{k+1,\alpha\}}\Big(\|p_S\|_{H^{\min\{k+1,\alpha\}}(\Omega_S)}
+\|\bm{u}_S\|_{H^{\min\{k+2,\alpha+1\}}(\Omega_S)}
+\|\bm{u}_D\|_{W^{\min\{k+1,\alpha\},4}(\Omega_D)}\Big),\\
&\|\bm{u}_S-\bm{u}_{S,h}\|_h+\|p_D-p_{D,h}\|_{Z_D}\\
&\quad\leq C h^{\min\{k,\alpha\}} \Big(\|\bm{u}_S\|_{H^{\min\{k,\alpha\}+1}(\Omega_S)}+\|p_S\|_{H^{\min\{k,\alpha\}}(\Omega_S)}
+\|\bm{u}_D\|_{W^{\min\{k,\alpha\},4}(\Omega_D)}+\|p_D\|_{W^{\min\{k,\alpha\}+1,3/2}(\Omega_D)}\Big).
\end{align*}

\end{theorem}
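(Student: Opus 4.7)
The plan is to run a Strang-type energy estimate centered on the monotonicity of $A$, supplemented by the various inf-sup conditions to recover pressure and energy-norm bounds. Introduce the discrete errors $\bm{e}_\sigma := J_h\bm{\sigma}_S-\bm{\sigma}_{S,h}$, $\bm{e}_u := I_h\bm{u}_S-\bm{u}_{S,h}$, $e_{p_S} := \pi_h p_S-p_{S,h}$, $\bm{e}_{u_D} := J_h\bm{u}_D-\bm{u}_{D,h}$, $e_{p_D} := I_h p_D-p_{D,h}$, so that each total error is the sum of an interpolation error (controlled by \eqref{eq:interpolation}, \eqref{eq:IhD}, \eqref{eq:pih}) and a discrete error. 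Rewrite the error equations \eqref{eq:error} by inserting these projections; the projection orthogonality \eqref{eq:interpolationI} kills $a_S(\bm{\sigma}_S-J_h\bm{\sigma}_S,\cdot)$ and $a_D(\bm{u}_D-J_h\bm{u}_D,\cdot)$, so only the interpolation pieces of $a_S^*$, $a_D^*$, $b_S^*$, and the interface terms remain as forcing data.

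Next, test with $\bm{\tau}_S=\bm{e}_\sigma$, $\bm{v}_S=\bm{e}_u$, $q_S=e_{p_S}$, $\bm{v}_D=\bm{e}_{u_D}$, $q_D=e_{p_D}$ and sum. Using the adjoint identities \eqref{eq:discreteaD}, the mixed bilinear-form pairs cancel; the $b_S$-$b_S^*$ pair cancels; the tangential interface term contributes $G\|(\bm{u}_S-\bm{u}_{S,h})\cdot\bm{t}\|_{L^2(\Gamma)}^2\geq0$; and Lemma~\ref{lemma:monotone} converts $(A(\bm{u}_D)-A(\bm{u}_{D,h}),\bm{u}_D-\bm{u}_{D,h})_{\Omega_D}$ into a lower bound $C\|\bm{u}_D-\bm{u}_{D,h}\|_{L^2(\Omega_D)}^2$. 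The resulting energy inequality reads
\begin{align*}
\nu^{-1}\|\bm{e}_\sigma\|_{L^2(\Omega_S)}^2 + C\|\bm{u}_D-\bm{u}_{D,h}\|_{L^2(\Omega_D)}^2 + G\|(\bm{u}_S-\bm{u}_{S,h})\cdot\bm{t}\|_{L^2(\Gamma)}^2 \leq \mathcal{R},
\end{align*}
where $\mathcal{R}$ collects the interpolation-error contributions from the bilinear forms and the two interface integrals $\langle I_h p_D-p_D,\bm{e}_u\cdot\bm{n}_S\rangle_\Gamma$ and $\langle(I_h\bm{u}_S-\bm{u}_S)\cdot\bm{n}_S,e_{p_D}\rangle_\Gamma$. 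The interface pieces are handled using the new discrete trace inequality (Lemma~\ref{lemma:trace}) and Corollary~\ref{coro:trace}: for instance the second one is bounded by $\|(I_h\bm{u}_S-\bm{u}_S)\cdot\bm{n}_S\|_{L^2(\Gamma)}\|e_{p_D}\|_{L^2(\Gamma)}\leq C\|\bm{u}_S-I_h\bm{u}_S\|_{L^2(\Gamma)}\|e_{p_D}\|_{Z_D}$, with $\|e_{p_D}\|_{Z_D}$ absorbed later via Lemma~\ref{lemma:insup3}.

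With $\|\bm{e}_\sigma\|_{L^2}$ and $\|\bm{u}_D-\bm{u}_{D,h}\|_{L^2}$ in hand, recover the remaining norms one by one: $\|\bm{e}_u\|_h$ from the inf-sup \eqref{eq:inf-supaS} applied to the Stokes momentum error equation (the RHS involves $\bm{e}_\sigma$ and interpolation terms only); $\|e_{p_S}\|_{L^2(\Omega_S)}$ from \eqref{inf-sup-bh} applied to the first error equation, testing with a $\bm{v}_S$ chosen to realize the inf-sup and bounding the resulting interface terms via Corollary~\ref{coro:trace} and Lemma~\ref{lemma:trace}; and $\|e_{p_D}\|_{Z_D}$ from Lemma~\ref{lemma:insup3} applied to the Darcy--Forchheimer momentum error equation, where the Lipschitz bound \eqref{continuity} converts $(A(\bm{u}_D)-A(\bm{u}_{D,h}),\bm{v}_D)_{\Omega_D}$ into terms controlled by $\|\bm{u}_D-\bm{u}_{D,h}\|_{L^2(\Omega_D)}$ (already bounded) together with products of $L^3$/$L^4$ factors. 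Triangle inequality with the interpolation estimates then delivers the convergence rates stated.

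The main obstacle is the nonlinear term. Monotonicity only gives $L^2(\Omega_D)$-control, but the natural space for $\bm{V}_{h_D}$ is $L^3$-based, and the Lipschitz estimate \eqref{continuity} for $|A(\bm{u}_D)-A(J_h\bm{u}_D)|$ forces a cross term $\frac{\beta}{\rho}|\bm{u}_D-J_h\bm{u}_D|(|\bm{u}_D|+|J_h\bm{u}_D|)$ that, to be paired against $\bm{e}_{u_D}\in L^3$, must be estimated via H\"older as $\|\bm{u}_D-J_h\bm{u}_D\|_{L^4}\bigl(\|\bm{u}_D\|_{L^4}+\|J_h\bm{u}_D\|_{L^4}\bigr)\|\bm{e}_{u_D}\|_{L^2}$; this is exactly why the theorem demands $\bm{u}_D\in W^{\alpha,4}(\Omega_D)^2$ and leads to the $h^{\min\{k+1,\alpha\}}$ rate for $\bm{u}_D$. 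The second delicate point is ensuring the interface contributions from $\langle p_D-I_hp_D,\cdot\rangle_\Gamma$ are absorbed at the correct order, which is achieved by combining the discrete trace bound with the approximation estimate on $\Gamma$ for $p_D$ in $W^{\alpha+1,3/2}$.
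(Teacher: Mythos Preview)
Your overall architecture matches the paper's: Strang splitting via $I_h,J_h,\pi_h$, energy identity from \eqref{eq:error} tested with the discrete errors, monotonicity of $A$ for the $L^2$ control of $\bm{u}_D-\bm{u}_{D,h}$, and then inf--sup recovery of $\|\bm{e}_u\|_h$, $\|e_{p_S}\|_{L^2}$, $\|e_{p_D}\|_{Z_D}$ in sequence. The nonlinear H\"older argument you sketch for $A(\bm{u}_D)-A(J_h\bm{u}_D)$ is also what the paper does and is indeed the reason for the $W^{\alpha,4}$ hypothesis.

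The one substantive miss is your treatment of the normal interface terms. You keep $\langle I_hp_D-p_D,\bm{e}_u\cdot\bm{n}_S\rangle_\Gamma$ and $\langle(I_h\bm{u}_S-\bm{u}_S)\cdot\bm{n}_S,e_{p_D}\rangle_\Gamma$ in the residual and propose to bound them by the discrete trace inequality, with $\|e_{p_D}\|_{Z_D}$ ``absorbed later via Lemma~\ref{lemma:insup3}.'' That absorption is circular: the inf--sup bound for $\|e_{p_D}\|_{Z_D}$ feeds back $\|\bm{u}_D-\bm{u}_{D,h}\|_{L^2}$ multiplied by $\|\bm{u}_{D,h}\|_{L^6}$, and controlling the latter (via inverse inequality plus $\|J_h\bm{u}_D-\bm{u}_{D,h}\|_{L^2}$) already requires the energy estimate you are trying to close. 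The paper avoids this entirely by observing that both interface terms are \emph{exactly zero}: since $\Gamma\subset\mathcal{F}_{u,S}\cap\mathcal{F}_{u,D}$, the defining moment conditions of $I_h$ give $\langle I_hp_D-p_D,\phi\rangle_e=0$ and $\langle(I_h\bm{u}_S-\bm{u}_S)\cdot\bm{n}_S,\psi\rangle_e=0$ for every $\phi,\psi\in P_k(e)$, $e\subset\Gamma$, and both $\bm{e}_u\cdot\bm{n}_S$ and $e_{p_D}$ lie in $P_k(e)$. The same mechanism turns the tangential term into the perfect square $G\|(I_h\bm{u}_S-\bm{u}_{S,h})\cdot\bm{t}\|_{L^2(\Gamma)}^2$, not $G\|(\bm{u}_S-\bm{u}_{S,h})\cdot\bm{t}\|_{L^2(\Gamma)}^2$ as you wrote. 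With the normal terms gone, the energy estimate \eqref{eq:flux} closes first, and only then is $\|e_{p_D}\|_{Z_D}$ recovered; this also explains why $p_D$ does not appear in the first two estimates of the theorem, contrary to what your last paragraph suggests.
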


\begin{proof}
The adjoint property \eqref{eq:discreteaD} and \eqref{eq:interpolationI} imply that
\begin{align*}
a_D^*(I_hp_D-p_{D,h},J_h\bm{u}_D-\bm{u}_{D,h})&=a_D(J_h\bm{u}_D-\bm{u}_{D,h},I_hp_D-p_{D,h})\\
&=a_D(\bm{u}_D-\bm{u}_{D,h},I_hp_D-p_{D,h}),\\
a_S^*(I_h\bm{u}_S-\bm{u}_{S,h},J_h\bm{\sigma}_S-\bm{\sigma}_{S,h})
&=a_S(J_h\bm{\sigma}_S-\bm{\sigma}_{S,h},I_h\bm{u}_S-\bm{u}_{S,h})\\
&=a_S(\bm{\sigma}_S-\bm{\sigma}_{S,h},I_h\bm{u}_S-\bm{u}_{S,h}).
\end{align*}
Recall that we have the following error equations (cf. \eqref{eq:error})
\begin{equation*}
\begin{split}
&a_S(\bm{\sigma}_S-\bm{\sigma}_{S,h},I_h\bm{u}_S-\bm{u}_{S,h})
+b_S^*(p_S-p_{S,h},I_h\bm{u}_S-\bm{u}_{S,h})\\
&\;+a_D(\bm{u}_D-\bm{u}_{D,h},I_hp_D-p_{D,h})+\langle p_D-p_{D,h},(I_h\bm{u}_S-\bm{u}_{S,h})\cdot\bm{n}_{S}\rangle_\Gamma\\
&\;+G\langle (\bm{u}_S-\bm{u}_{S,h})\cdot\bm{t}, (I_h\bm{u}_S-\bm{u}_{S,h})\cdot\bm{t}\rangle_\Gamma-\langle (\bm{u}_S-\bm{u}_{S,h})\cdot\bm{n}_{S},I_hp_D-p_{D,h}\rangle_\Gamma=0
\end{split}
\end{equation*}
and
\begin{equation}
\begin{split}
&\nu^{-1}(\bm{\sigma}_S-\bm{\sigma}_{S,h}, J_h\bm{\sigma}_S-\bm{\sigma}_{S,h})+(A(\bm{u}_D)-A(\bm{u}_{D,h}), J_h\bm{u}_D-\bm{u}_{D,h})_{\Omega_D}\\
&\;-a_S^*(I_h\bm{u}_S-\bm{u}_{S,h}, J_h\bm{\sigma}_S-\bm{\sigma}_{S,h})-a_D^*(I_hp_D-p_{D,h},J_h\bm{u}_D-\bm{u}_{D,h})=0.
\end{split}
\label{eq:error2}
\end{equation}
Since $b_S(\bm{u}_S-\bm{u}_{S,h},q_h)=0\;\forall q_h\in P_h$, we have from the adjoint property \eqref{eq:discreteaD}
\begin{align*}
&a_S(\bm{\sigma}_S-\bm{\sigma}_{S,h},I_h\bm{u}_S-\bm{u}_{S,h})
+b_S^*(p_S-\pi_h p_S,I_h\bm{u}_S-\bm{u}_{S,h})+a_D(\bm{u}_D-\bm{u}_{D,h},I_hp_D-p_{D,h})\\
&\;+\langle p_D-p_{D,h},(I_h\bm{u}_S-\bm{u}_{S,h})\cdot\bm{n}_{S}\rangle_\Gamma+G\langle (\bm{u}_S-\bm{u}_{S,h})\cdot\bm{t}, (I_h\bm{u}_S-\bm{u}_{S,h})\cdot\bm{t}\rangle_\Gamma\\
&\;-\langle (\bm{u}_S-\bm{u}_{S,h})\cdot\bm{n}_{S},I_hp_D-p_{D,h}\rangle_\Gamma=0.
\end{align*}
Thus,
\begin{equation}
\begin{split}
&a_S(\bm{\sigma}_S-\bm{\sigma}_{S,h},I_h\bm{u}_S-\bm{u}_{S,h})
+b_S^*(p_S-\pi_h p_S,I_h\bm{u}_S-\bm{u}_{S,h})+a_D(\bm{u}_D-\bm{u}_{D,h},I_hp_D-p_{D,h})\\
&\;+G\langle (\bm{u}_S-\bm{u}_{S,h})\cdot\bm{t}, (I_h\bm{u}_S-\bm{u}_{S,h})\cdot\bm{t}\rangle_\Gamma=0.
\end{split}
\label{eq:errort}
\end{equation}
We have from \eqref{eq:error2}
\begin{align*}
&\nu^{-1}\|J_h\bm{\sigma}_S-\bm{\sigma}_{S,h}\|_{L^2(\Omega_S)}^2+(A(J_h\bm{u}_D)-A(\bm{u}_{D,h}), J_h\bm{u}_D-\bm{u}_{D,h})_{\Omega_D}\\
&=
\nu^{-1}(J_h\bm{\sigma}_S-\bm{\sigma}_S,J_h\bm{\sigma}_S-\bm{\sigma}_{S,h})_{\Omega_S}
+a_S^*(I_h\bm{u}_S-\bm{u}_{S,h}, J_h\bm{\sigma}_S-\bm{\sigma}_{S,h})\\
&\;+a_D^*(I_hp_D-p_{D,h},J_h\bm{u}_D-\bm{u}_{D,h})+(A(J_h\bm{u}_D)-A(\bm{u}_{D}), J_h\bm{u}_D-\bm{u}_{D,h})_{\Omega_D}.
\end{align*}
The discrete adjoint property \eqref{eq:discreteaD} and \eqref{eq:errort} reveal that
\begin{align*}
&a_S^*(I_h\bm{u}_S-\bm{u}_{S,h}, J_h\bm{\sigma}_S-\bm{\sigma}_{S,h})
+a_D^*(I_hp_D-p_{D,h},J_h\bm{u}_D-\bm{u}_{D,h})\\
&=a_S(\bm{\sigma}_S-\bm{\sigma}_{S,h},I_h\bm{u}_S-\bm{u}_{S,h})
+a_D(\bm{u}_D-\bm{u}_{D,h},I_hp_D-p_{D,h})\\
&=-b_S^*(p_S-\pi_h p_S,I_h\bm{u}_S-\bm{u}_{S,h})-G\langle (I_h\bm{u}_S-\bm{u}_{S,h})\cdot\bm{t}, (I_h\bm{u}_S-\bm{u}_{S,h})\cdot\bm{t}\rangle_\Gamma\\
&\leq C \|p_S-\pi_h p_S\|_P\|I_h\bm{u}_S-\bm{u}_{S,h}\|_{h}.
\end{align*}
The Cauchy-Schwarz inequality and \eqref{continuity} yield
\begin{align*}
&(A(J_h\bm{u}_D)-A(\bm{u}_{D}), J_h\bm{u}_D-\bm{u}_{D,h})_{\Omega_D}\\
&\leq C\Big( \frac{\mu}{\rho}\|K^{-1}\|_{L^\infty(\Omega_D)}
\|J_h\bm{u}_D-\bm{u}_{D,h}\|_{L^2(\Omega_D)}
\|\bm{u}_D-J_h\bm{u}_D\|_{L^2(\Omega_D)}\\
&\;+\frac{\beta}{\rho}\|J_h\bm{u}_D-\bm{u}_{D,h}\|_{L^2(\Omega_D)}
\|\bm{u}_D-J_h\bm{u}_D\|_{L^4(\Omega_D)}(\|\bm{u}_D\|_{L^4(\Omega_D)}
+\|J_h\bm{u}_D\|_{L^4(\Omega_D)})\Big).
\end{align*}
Therefore, the preceding arguments and Lemma~\ref{lemma:monotone} yield
\begin{equation}
\begin{split}
&\|J_h\bm{\sigma}_S-\bm{\sigma}_{S,h}\|_{L^2(\Omega_S)}^2
+\|J_h\bm{u}_D-\bm{u}_{D,h}\|_{L^2(\Omega_D)}^2\\
&\leq C \Big(\|J_h\bm{\sigma}_S-\bm{\sigma}_S\|_{L^2(\Omega_S)}
\|J_h\bm{\sigma}_S-\bm{\sigma}_{S,h}\|_{L^2(\Omega_S)}
+\|p_S-\pi_h p_S\|_P\|I_h\bm{u}_S-\bm{u}_{S,h}\|_{h}\\
&\;+\frac{\mu}{\rho}\|K^{-1}\|_{L^\infty(\Omega_D)}
\|J_h\bm{u}_D-\bm{u}_{D,h}\|_{L^2(\Omega_D)}
\|\bm{u}_D-J_h\bm{u}_D\|_{L^2(\Omega_D)}\\
&\;+\frac{\beta}{\rho}\|J_h\bm{u}_D-\bm{u}_{D,h}\|_{L^2(\Omega_D)}
\|\bm{u}_D-J_h\bm{u}_D\|_{L^4(\Omega_D)}(\|\bm{u}_D\|_{L^4(\Omega_D)}
+\|J_h\bm{u}_D\|_{L^4(\Omega_D)})\Big).
\end{split}
\label{eq:sigmas}
\end{equation}
An application of the discrete inf-sup condition \eqref{eq:inf-supaS2} and the adjoint property \eqref{eq:discreteaD} yields
\begin{equation}
\begin{split}
\|I_h\bm{u}_S-\bm{u}_{S,h}\|_{h}&\leq C \sup_{\bm{\tau}_S\in [\bm{V}_{h_S}]^2 }\frac{a_S(\bm{\tau}_S,I_h\bm{u}_S-\bm{u}_{S,h})}{\|\bm{\tau}_S\|_{L^2(\Omega_S)}}
=C\sup_{\bm{\tau}_S\in [\bm{V}_{h_S}]^2}
\frac{a_S^*(I_h\bm{u}_S-\bm{u}_{S,h},\bm{\tau}_S)}{\|\bm{\tau}_S\|_{L^2(\Omega_S)}}\\
&=C\sup_{\bm{\tau}_S\in [\bm{V}_{h_S}]^2 }\frac{\nu^{-1}(\bm{\sigma}_S-\bm{\sigma}_{S,h},\bm{\tau}_S)_{\Omega_S}}
{\|\bm{\tau}_S\|_{L^2(\Omega_S)}}\leq C \|\bm{\sigma}_S-\bm{\sigma}_{S,h}\|_{L^2(\Omega_S)},
\end{split}
\label{eq:IhuuS}
\end{equation}
which can be combined with \eqref{eq:sigmas} to get
\begin{align*}
&\|J_h\bm{\sigma}_S-\bm{\sigma}_{S,h}\|_{L^2(\Omega_S)}^2
+\|J_h\bm{u}_D-\bm{u}_{D,h}\|_{L^2(\Omega_D)}^2\\
&\leq C \Big(\|J_h\bm{\sigma}_S-\bm{\sigma}_S\|_{L^2(\Omega_S)}
\|J_h\bm{\sigma}_S-\bm{\sigma}_{S,h}\|_{L^2(\Omega_S)}
+\|p_S-\pi_h p_S\|_P\|\bm{\sigma}_S-\bm{\sigma}_{S,h}\|_{L^2(\Omega_S)}\\
&\;+\frac{\mu}{\rho}\|K^{-1}\|_{L^\infty(\Omega_D)}
\|J_h\bm{u}_D-\bm{u}_{D,h}\|_{L^2(\Omega_D)}
\|\bm{u}_D-J_h\bm{u}_D\|_{L^2(\Omega_D)}\\
&\;+\frac{\beta}{\rho}\|J_h\bm{u}_D-\bm{u}_{D,h}\|_{L^2(\Omega_D)}
\|\bm{u}_D-J_h\bm{u}_D\|_{L^4(\Omega_D)}(\|\bm{u}_D\|_{L^4(\Omega_D)}
+\|J_h\bm{u}_D\|_{L^4(\Omega_D)})\Big).
\end{align*}
Young's inequality gives
\begin{equation}
\begin{split}
&\|J_h\bm{\sigma}_S-\bm{\sigma}_{S,h}\|_{L^2(\Omega_S)}^2
+\|J_h\bm{u}_D-\bm{u}_{D,h}\|_{L^2(\Omega_S)}^2\\
&\leq C \Big(\|J_h\bm{\sigma}_S-\bm{\sigma}_S\|_{L^2(\Omega_S)}^2
+\|p_S-\pi_h p_S\|_P^2+\|\bm{\sigma}_S-J_h\bm{\sigma}_{S}\|_{L^2(\Omega_S)}^2\\
&\;+\frac{\mu}{\rho}\|K^{-1}\|_{L^\infty(\Omega_D)}
\|\bm{u}_D-J_h\bm{u}_D\|_{L^2(\Omega_D)}^2\\
&\;+\frac{\beta}{\rho}
\|\bm{u}_D-J_h\bm{u}_D\|_{L^4(\Omega_D)}^2(\|\bm{u}_D\|_{L^4(\Omega_D)}^2
+\|J_h\bm{u}_D\|_{L^4(\Omega_D)}^2)\Big).
\end{split}
\label{eq:flux}
\end{equation}
Lemma~\ref{lemma:insup3} and the adjoint property \eqref{eq:discreteaD} yield
\begin{align*}
\|I_hp_D-p_{D,h}\|_{Z_D}&\leq C \sup_{\bm{v}_D\in \bm{V}_{h_D}}\frac{a_D(\bm{v}_D, I_hp_D-p_{D,h} )}{\|\bm{v}_D\|_{L^3(\Omega_D)}}=C\sup_{\bm{v}_D\in \bm{V}_{h_D}}\frac{a_D^*(I_hp_D-p_{D,h},\bm{v}_D)}{\|\bm{v}_D\|_{L^3(\Omega_D)}}\\
&=C\sup_{\bm{v}_D\in \bm{V}_{h_D}}\frac{(A(\bm{u}_D)-A(\bm{u}_{D,h}),\bm{v}_D)_{\Omega_D}}{\|\bm{v}_D\|_{L^3(\Omega_D)}}.
\end{align*}
An application of the triangle inequality, the inverse estimates and Sobolev imbedding theorem yields for which $\bm{u}_D\in L^6(\Omega_D)^2$
\begin{align*}
\|\bm{u}_{D,h}\|_{L^6(\Omega_D)}&\leq \|\bm{u}_{D,h}-J_h\bm{u}_D\|_{L^6(\Omega_D)}
+\|J_h\bm{u}_D\|_{L^6(\Omega_D)}\\
&\leq C ( h^{-2/3}\|\bm{u}_{D,h}-J_h\bm{u}_D\|_{L^2(\Omega_D)}+\|\bm{u}_D\|_{L^6(\Omega_D)})\\
&\leq C ( h^{-2/3}\|\bm{u}_{D,h}-J_h\bm{u}_D\|_{L^2(\Omega_D)}+\|\bm{u}_D\|_{H^1(\Omega_D)}).
\end{align*}
The interpolation error estimates \eqref{eq:interpolation}, \eqref{eq:pih} and \eqref{eq:flux} imply that
\begin{align*}
\|\bm{u}_{D,h}\|_{L^6(\Omega_D)}\leq C \|\bm{u}_D\|_{W^{1,4}(\Omega_D)}.
\end{align*}
By (\ref{continuity}), we can obtain
\begin{align*}
(A(\bm{u}_D)-A(\bm{u}_{D,h}), \bm{v}_D)_{\Omega_D}&\leq \frac{\mu}{\rho}\|K^{-1}\|_{L^\infty(\Omega_D)}
\|\bm{u}_D-\bm{u}_{D,h}\|_{L^2(\Omega_D)}\|\bm{v}_D\|_{L^2(\Omega_D)}\\
&+\frac{\beta}{\rho}\|\bm{u}_D-\bm{u}_{D,h}\|_{L^2(\Omega_D)}
(\|\bm{u}_{D,h}\|_{L^6(\Omega_D)}+\|\bm{u}_D\|_{L^6(\Omega_D)})\|\bm{v}_D\|_{L^3(\Omega_D)}.
\end{align*}
Thus
\begin{equation}
\begin{split}
&\|I_hp_D-p_{D,h}\|_{Z_D}\\
&\leq C \Big(\|\bm{u}_D-\bm{u}_{D,h}\|_{L^2(\Omega_D)}+\|\bm{u}_D-\bm{u}_{D,h}\|_{L^2(\Omega_D)}
(\|\bm{u}_{D,h}\|_{L^6(\Omega_D)}+\|\bm{u}_D\|_{L^6(\Omega_D)})\Big).
\end{split}
\label{eq:IhppD}
\end{equation}
It remains to estimate $\|p_{S,h}-\pi_h p_S\|_{L^2(\Omega_S)}$. The discrete inf-sup condition \eqref{inf-sup-bh} yields
\begin{align*}
\|p_{S,h}-\pi_h p_S\|_{L^2(\Omega_S)}\leq C \sup_{\bm{v}\in [U_{h_S}]^2}\frac{b_S(\bm{v},p_{S,h}-\pi_h p_S)}{\|\bm{v}\|_{h}}.
\end{align*}
It then follows from \eqref{eq:error}, the definitions of $I_h$ and $J_h$, the Cauchy-Schwarz inequality, Lemma~\ref{lemma:trace} and Corollary~\ref{coro:trace} that
\begin{align*}
b_S^*(p_{S,h}-\pi_h p_S,\bm{v})&=b_S^*(p_{S}-\pi_h p_S,\bm{v})
+a_S(\bm{\sigma}_S-\bm{\sigma}_{S,h},\bm{v})+\langle p_D-p_{D,h},\bm{v}\cdot\bm{n}_{S}\rangle_\Gamma\\
&\;+G\langle (\bm{u}_S-\bm{u}_{S,h})\cdot\bm{t},\bm{v}\cdot\bm{t}\rangle_\Gamma\\
&=b_S^*(p_{S}-\pi_h p_S,\bm{v})
+a_S(J_h\bm{\sigma}_S-\bm{\sigma}_{S,h},\bm{v})+\langle I_hp_D-p_{D,h},\bm{v}\cdot\bm{n}_{S}\rangle_\Gamma\\
&\;+G\langle (I_h\bm{u}_S-\bm{u}_{S,h})\cdot\bm{t},\bm{v}\cdot\bm{t}\rangle_\Gamma\\
&\leq C \Big(\|p_S-\pi_h p_S\|_P\|\bm{v}\|_{h}+
\|J_h\bm{\sigma}_S-\bm{\sigma}_{S,h}\|_{L^2(\Omega_S)}\|\bm{v}\|_{h}\\
&\;+\|I_hp_D-p_{D,h}\|_{Z_D}\|\bm{v}\|_{h}
+\|I_h\bm{u}_S-\bm{u}_{S,h}\|_{h}\|\bm{v}\|_{h}\Big).
\end{align*}
Therefore
\begin{align*}
&\|p_{S,h}-\pi_h p_S\|_{L^2(\Omega_S)}\\
&\leq C \Big(\|p_S-\pi_h p_S\|_P+
\|J_h\bm{\sigma}_S-\bm{\sigma}_{S,h}\|_{L^2(\Omega_S)}
+\|I_hp_D-p_{D,h}\|_{Z_D}+\|I_h\bm{u}_S-\bm{u}_{S,h}\|_{h}\Big).
\end{align*}
The proof is complete by combining the preceding arguments and the interpolation error estimates \eqref{eq:interpolation}, \eqref{eq:IhD} and \eqref{eq:pih}.

\end{proof}

We can achieve the following superconvergence by following \eqref{eq:IhuuS} and \eqref{eq:IhppD}
\begin{remark}(superconvergence)
Assume that $\bm{u}_S\in H^{\alpha+1}(\Omega_S)^{2},p_S\in H^\alpha(\Omega_S)$ and $\bm{u}_D\in W^{\alpha,4}(\Omega_D)^2$, then
\begin{align*}
&\|I_h\bm{u}_S-\bm{u}_{S,h}\|_h+\|I_hp_D-p_{D,h}\|_{Z_D}\\
&\quad\leq C h^{\min\{k+1,\alpha\}}\Big(\|p_S\|_{H^{\min\{k+1,\alpha\}}(\Omega_S)}
+\|\bm{u}_S\|_{H^{\min\{k+2,\alpha+1\}}(\Omega_S)}
+\|\bm{u}_D\|_{W^{\min\{k+1,\alpha\},4}(\Omega_D)}\Big).
\end{align*}

\end{remark}

%

%
%

\begin{theorem}

Assume that $\bm{u}_D\in W^{\alpha, 4}(\Omega_D)^2$,$p_D\in H^{\alpha+1}(\Omega_D)$, $\bm{u}_S\in H^{\alpha+1}(\Omega_S)^2$, $p_S\in H^\alpha(\Omega_S)$, then we have the following $L^2$ error estimates
\begin{align*}
&\|p_D-p_{D,h}\|_{L^2(\Omega_D)}+\|\bm{u}_S-\bm{u}_{S,h}\|_{L^2(\Omega_S)}\\
&\quad\leq Ch^{\min\{k+1,\alpha\}}\Big(\|p_S\|_{H^{\min\{k+1,\alpha\}}(\Omega_S)}
+\|\bm{u}_S\|_{H^{\min\{k+2,\alpha+1\}}(\Omega_S)}\\
&\qquad+\|\bm{u}_D\|_{W^{\min\{k+1,\alpha\},4}(\Omega_D)}
+\|p_D\|_{H^{\min\{k+1,\alpha\}}(\Omega_D)}\Big).
\end{align*}

\end{theorem}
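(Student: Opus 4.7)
The plan is to derive both $L^2$ estimates by a triangle-inequality argument pivoted through the interpolants $I_h p_D$ and $I_h \bm{u}_S$, and then to exploit the superconvergence remark together with the generalized Poincar\'e-Friedrichs inequality (Lemma~\ref{lemma:poincarev}) and a discrete Poincar\'e inequality for $[U_{h_S}]^2$. That is, I would first write
\begin{align*}
\|p_D-p_{D,h}\|_{L^2(\Omega_D)} &\le \|p_D-I_hp_D\|_{L^2(\Omega_D)} + \|I_hp_D-p_{D,h}\|_{L^2(\Omega_D)},\\
\|\bm{u}_S-\bm{u}_{S,h}\|_{L^2(\Omega_S)} &\le \|\bm{u}_S-I_h\bm{u}_S\|_{L^2(\Omega_S)} + \|I_h\bm{u}_S-\bm{u}_{S,h}\|_{L^2(\Omega_S)}.
\end{align*}
The interpolation terms are handled directly by the estimates in \eqref{eq:interpolation}, which produce a contribution of order $h^{\min\{k+1,\alpha\}}\|p_D\|_{H^{\min\{k+1,\alpha\}}(\Omega_D)}$ and $h^{\min\{k+1,\alpha\}}\|\bm{u}_S\|_{H^{\min\{k+2,\alpha+1\}}(\Omega_S)}$, respectively.

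Next I would reduce each discrete-error term to a norm in which superconvergence has already been established. For the Darcy pressure, Lemma~\ref{lemma:poincarev} (applied to $I_hp_D-p_{D,h}\in U_{h_D}$, whose trace on $\Gamma_D$ vanishes) gives
\begin{align*}
\|I_hp_D-p_{D,h}\|_{L^2(\Omega_D)} \le C\,\|I_hp_D-p_{D,h}\|_{Z_D}.
\end{align*}
For the Stokes velocity, the same reasoning that underlies the discrete Poincar\'e-Friedrichs inequality on $U_{h_S}$ (cf.\ the argument in Lemma~\ref{lemma:poincarev} carried out in the $L^2$-$H^1$ setting, or the inequality already invoked in the proof of existence of the discrete problem) yields component-wise
\begin{align*}
\|I_h\bm{u}_S-\bm{u}_{S,h}\|_{L^2(\Omega_S)} \le C\,\|I_h\bm{u}_S-\bm{u}_{S,h}\|_h.
\end{align*}

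At this point the superconvergence remark is directly applicable: it bounds both $\|I_h\bm{u}_S-\bm{u}_{S,h}\|_h$ and $\|I_hp_D-p_{D,h}\|_{Z_D}$ by $C h^{\min\{k+1,\alpha\}}$ times the stated combination of $H^{\min\{k+1,\alpha\}}(\Omega_S)$, $H^{\min\{k+2,\alpha+1\}}(\Omega_S)$, and $W^{\min\{k+1,\alpha\},4}(\Omega_D)$ norms of the exact solution. Adding the interpolation contribution and these superconvergent contributions, and relabelling constants, delivers the advertised $L^2$ error estimates with order $h^{\min\{k+1,\alpha\}}$.

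The only technical subtlety I expect is justifying the $L^2$-to-$h$ control for the Stokes component, since Lemma~\ref{lemma:poincarev} is stated in the $L^2$-$Z_D$ setting tailored to the Darcy space. The fix is to repeat the interpolation-through-$M_h$ construction used there for the space $U_{h_S}$, replacing $W^{1,3/2}$ estimates by the standard $H^1$ ones; because the relevant discrete Poincar\'e-Friedrichs inequality on $U_{h_S}$ is already used (implicitly) in the well-posedness theorem, this reduction is routine and does not introduce new ingredients beyond what is already in the paper.
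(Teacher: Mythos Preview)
Your proposal is correct and follows essentially the same route as the paper: the paper's proof likewise applies Lemma~\ref{lemma:poincarev} to $I_hp_D-p_{D,h}$ and the discrete Poincar\'e--Friedrichs inequality from \cite{Brenner} to $I_h\bm{u}_S-\bm{u}_{S,h}$, then invokes the bounds \eqref{eq:IhuuS} and \eqref{eq:IhppD} (i.e., the superconvergence remark) together with Theorem~\ref{thm:con1}. Your worry about the Stokes-side Poincar\'e inequality is resolved exactly as you anticipate---the paper simply cites \cite{Brenner} for it.
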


\begin{proof}
Lemma~\ref{lemma:poincarev} reveals that
\begin{align*}
\|I_h p_D-p_{D,h}\|_0\leq C\|I_hp_D-p_{D,h}\|_{Z_D}.
\end{align*}

By the discrete Poincar\'{e}-Friedrichs inequality given in \cite{Brenner}, Lemma~\ref{lemma:poincarev}, \eqref{eq:IhuuS} and \eqref{eq:IhppD}, we have
\begin{align*}
&\|I_h\bm{u}_{S}-\bm{u}_{S,h}\|_{L^2(\Omega_S)}+\|I_hp_D-p_{D,h}\|_{L^2(\Omega_D)}\\
&\leq C\Big( \|I_h\bm{u}_{S}-\bm{u}_{S,h}\|_h+\|I_hp_D-p_{D,h}\|_{Z_D}\Big)\\
&\leq C\Big(\|\bm{\sigma}_S-\bm{\sigma}_{S,h}\|_{L^2(\Omega_S)}
+\|\bm{u}_D-\bm{u}_{D,h}\|_{L^2(\Omega_D)}\\
&\;+\|\bm{u}_D-\bm{u}_{D,h}\|_{L^2(\Omega_D)}
(\|\bm{u}_{D,h}\|_{L^6(\Omega_D)}+\|\bm{u}_D\|_{L^6(\Omega_D)})\Big).
\end{align*}
The assertion follows by employing Theorem~\ref{thm:con1}.

\end{proof}

\section{Numerical experiments}\label{sec:numerical}
The section presents several numerical tests to confirm the proposed theoretical results. Fist, we present three examples with known exact solution to test the convergence of the proposed method, where the permeability for the third example is highly oscillatory. Then, the fourth example shows the application of the proposed method to problems with high contrast permeability, and the numerical results indicate that our method is a good candidate for practical applications. We note that all the experiments are tested for piecewise linear polynomial, i.e., $k=1$.

The numerical examples presented below can violate the interface conditions \eqref{interface-normal} and \eqref{interface-BJS}, that is, \eqref{interface-normal} and \eqref{interface-BJS} are replaced by
\begin{equation*}
\begin{split}
p_S-\nu\bm{n}_{S}\frac{\partial\bm{u}_S}{\partial \bm{n}_{S}}&=p_D+g_1,\hspace{1cm} \mbox{on}\; \Gamma,\\
-\nu \bm{t}\frac{\partial \bm{u}_{S}}{\partial \bm{n}_{S}}&=G\bm{u}_{S}\cdot\bm{t}+g_2,\quad \mbox{on}\; \Gamma,
\end{split}
\end{equation*}
to deal with this case, the variational formulation  has only a small change: the first equation of \eqref{eq:discrete1} now includes the two terms $-\langle g_1, \bm{v}_S\cdot\bm{n}_{S}\rangle_\Gamma-\langle g_2,\bm{v}_S\cdot\bm{t}\rangle_\Gamma$ on the right side.

\begin{example}\label{ex1}

\end{example}
In this example, we consider $\Omega_S=(0,1)\times(0,1)$ and $\Omega_D=(0,1)\times (1,2)$. We set $K$ to be the identity tensor in $\mathbb{R}^{2\times 2}$, $\mu=\rho=\beta=\nu=1$ and the exact solution is given by
\begin{align*}\bm{u}_S=
\left(
  \begin{array}{c}
    -\cos^2(\frac{\pi y}{2})\sin(\frac{\pi x}{2}) \\
    \frac{1}{4}\cos(\frac{\pi x}{2})(\sin(\pi y)+\pi y) \\
  \end{array}
\right),\quad p_S=-\frac{\pi}{4} \cos(\frac{\pi x}{2})(y-2\cos(\frac{\pi y}{2})^2)
\end{align*}
and
\begin{align*}\bm{u}_D=
\left(
  \begin{array}{c}
     -\frac{1}{8}y \pi^2\sin(\frac{\pi x}{2}) \\
    \frac{1}{4}\pi \cos(\frac{\pi x}{2}) \\
  \end{array}
\right),\quad p_D=-\frac{\pi}{4} \cos(\frac{\pi x}{2}) y.
\end{align*}
The corresponding $\bm{f}_S$, $f_D$ and $\bm{g}_D$ can be calculated. Note that this example satisfy the interface condition \eqref{interface-condition}. The numerical approximations on triangular meshes are displayed in Figure~\ref{ex1:solution}, where $\bm{u}_{S,h}=(u_{S,h}^{1},u_{S,h}^2), \bm{u}_{D,h}=(u_{D,h}^1,u_{D,h}^2)$. Here, the triangular meshes are matching across the interface. The convergence history against the number of degrees of freedom for all the variables in $L^2$-norm are reported in Figure~\ref{ex1-con}, from which we can see that the optimal convergence in $L^2$-norm for all the variables can be obtained as indicated by our theory.

\begin{figure}[H]
    \centering
    \begin{minipage}[b]{0.4\textwidth}
      \includegraphics[width=1\textwidth]{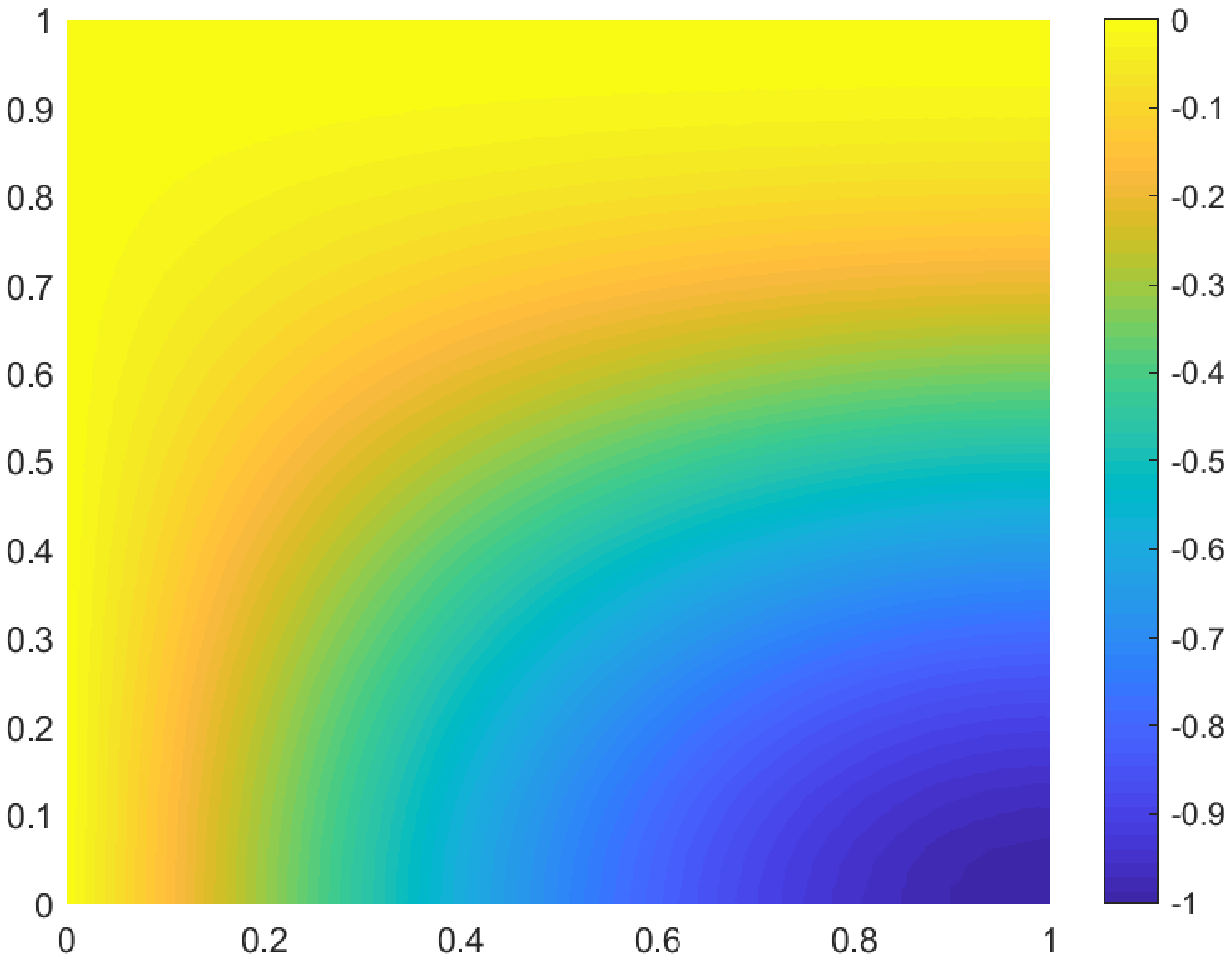}
    \end{minipage}%
    \begin{minipage}[b]{0.4\textwidth}
      \includegraphics[width=1\textwidth]{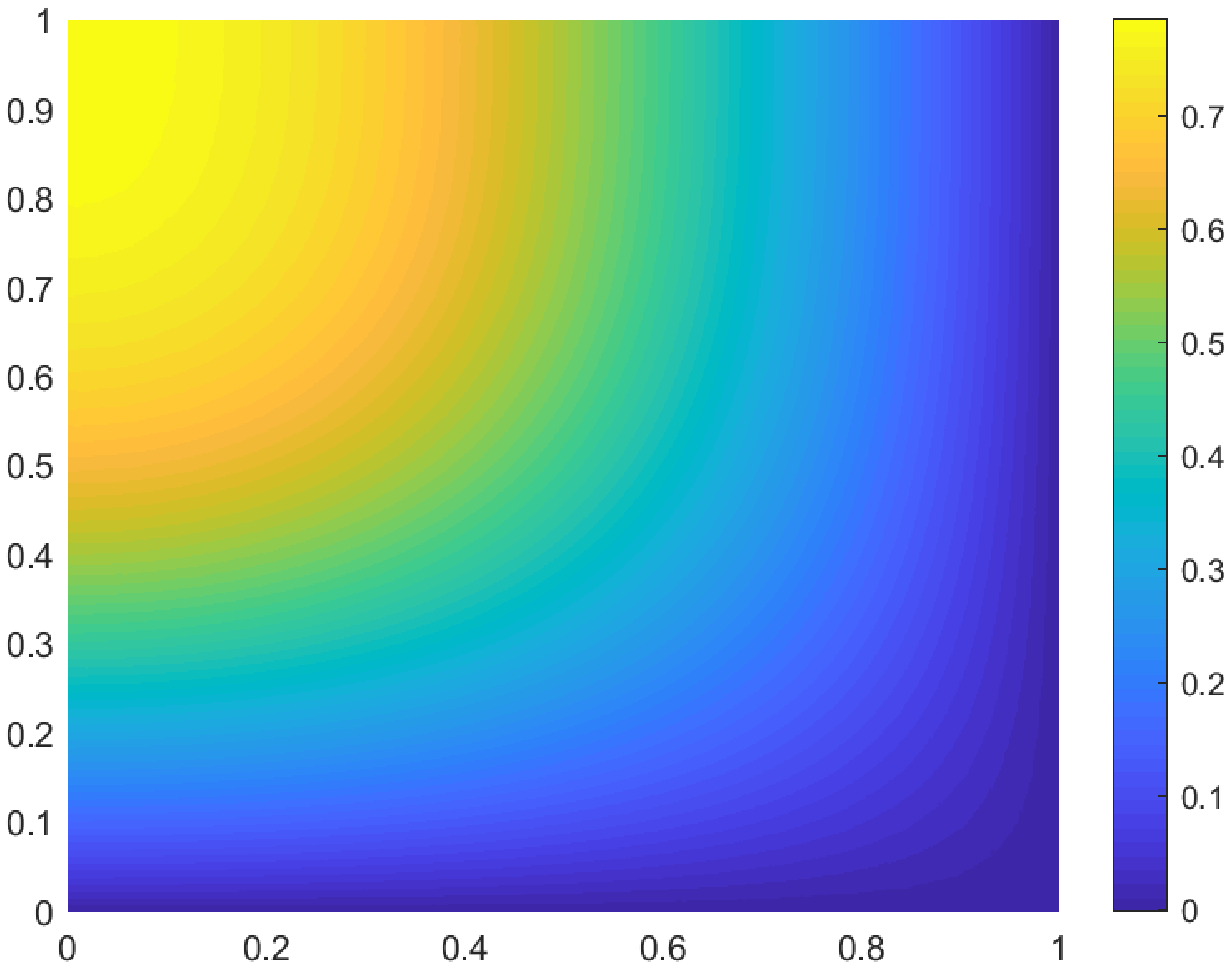}
    \end{minipage}
    \begin{minipage}[b]{0.4\textwidth}
      \includegraphics[width=1\textwidth]{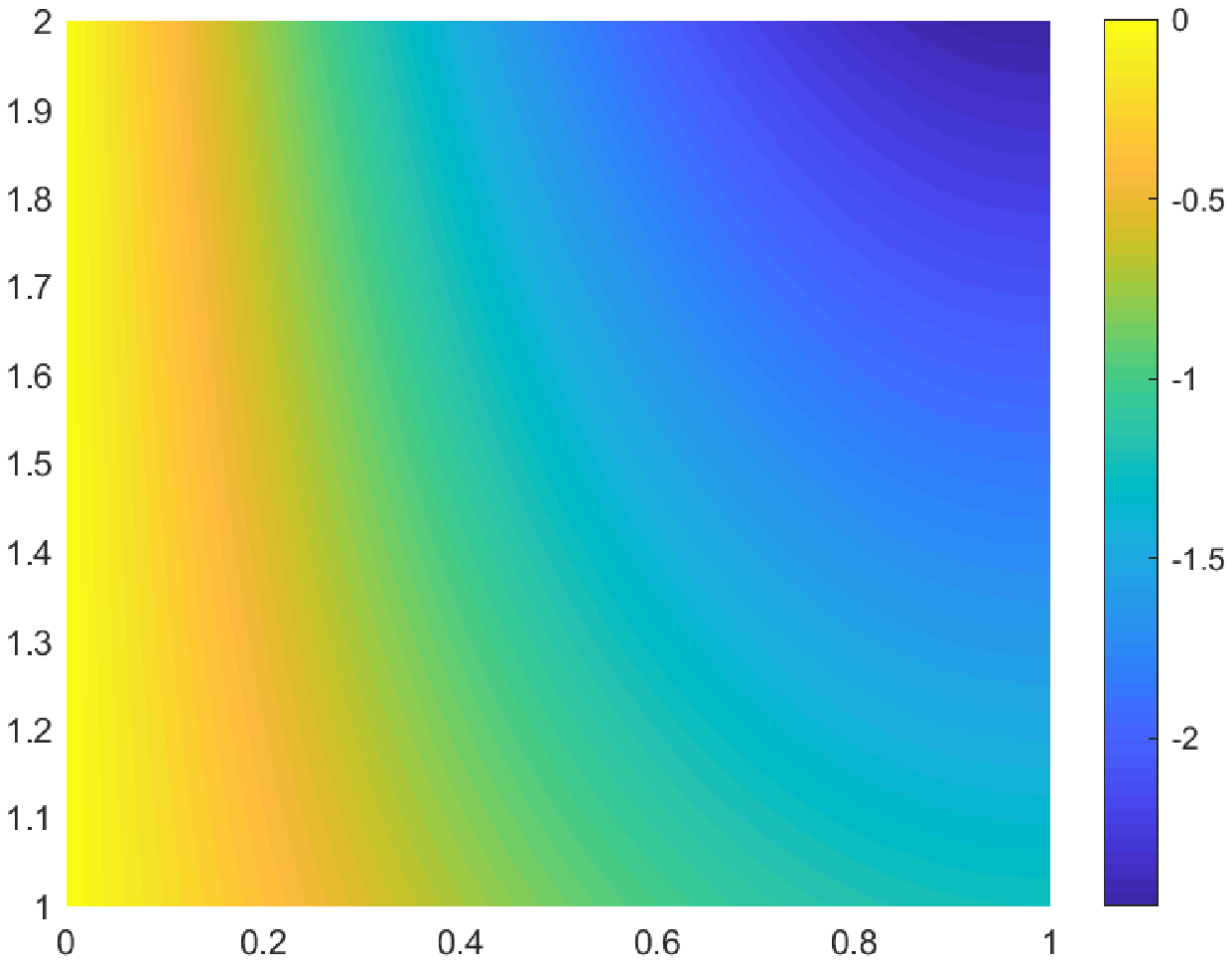}
    \end{minipage}
     \begin{minipage}[b]{0.4\textwidth}
      \includegraphics[width=1\textwidth]{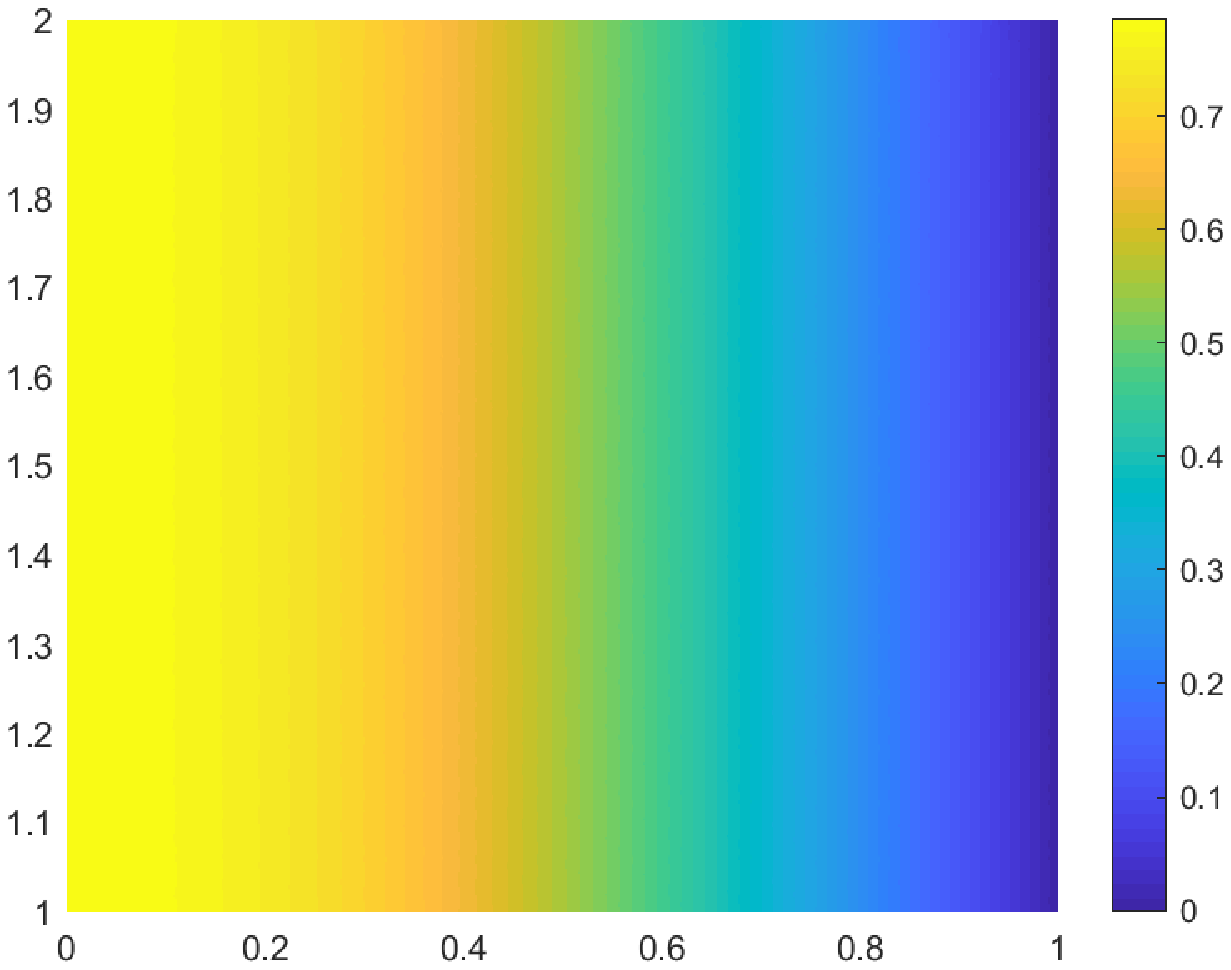}
    \end{minipage}
     \begin{minipage}[b]{0.4\textwidth}
      \includegraphics[width=1\textwidth]{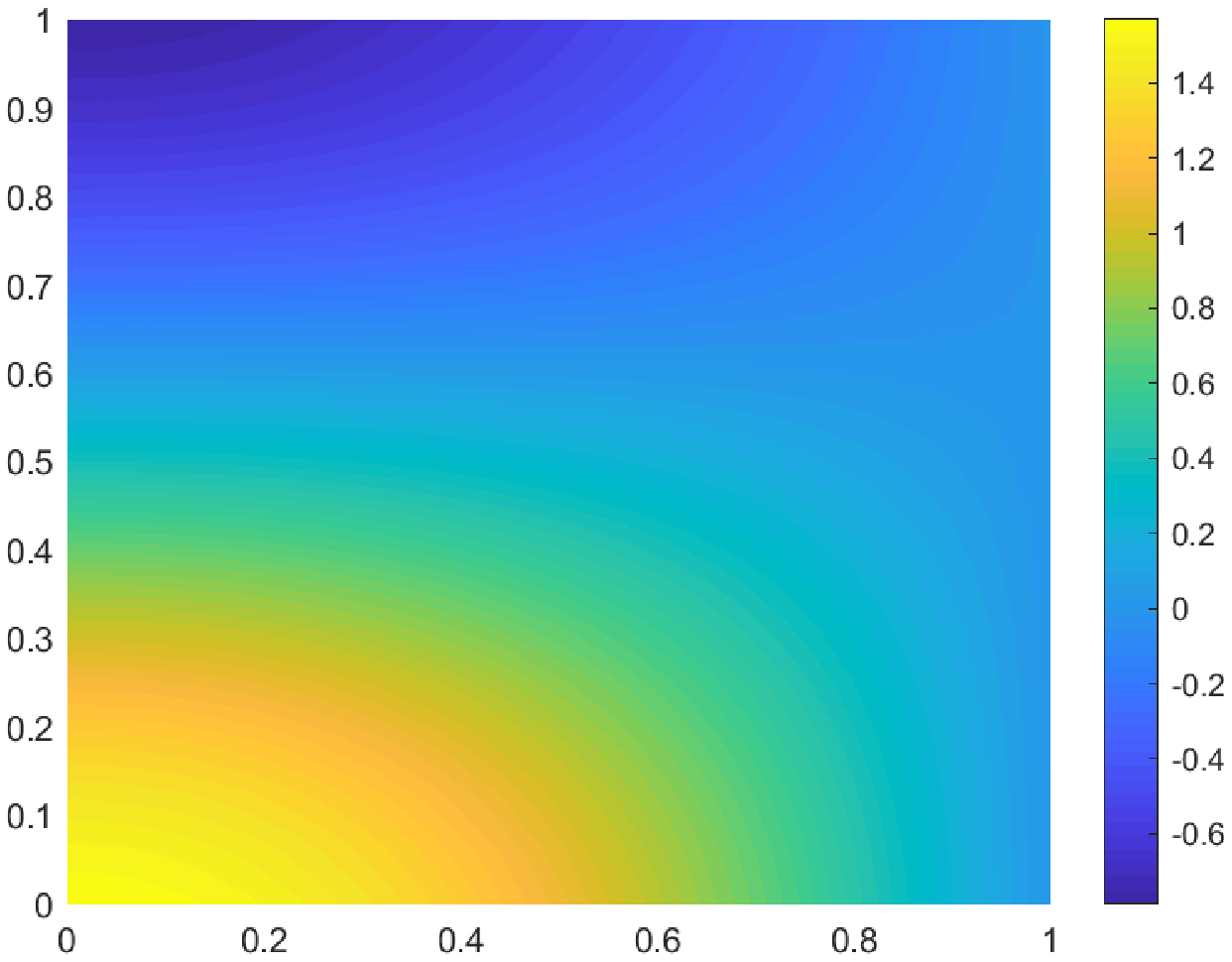}
    \end{minipage}
    \begin{minipage}[b]{0.4\textwidth}
      \includegraphics[width=1\textwidth]{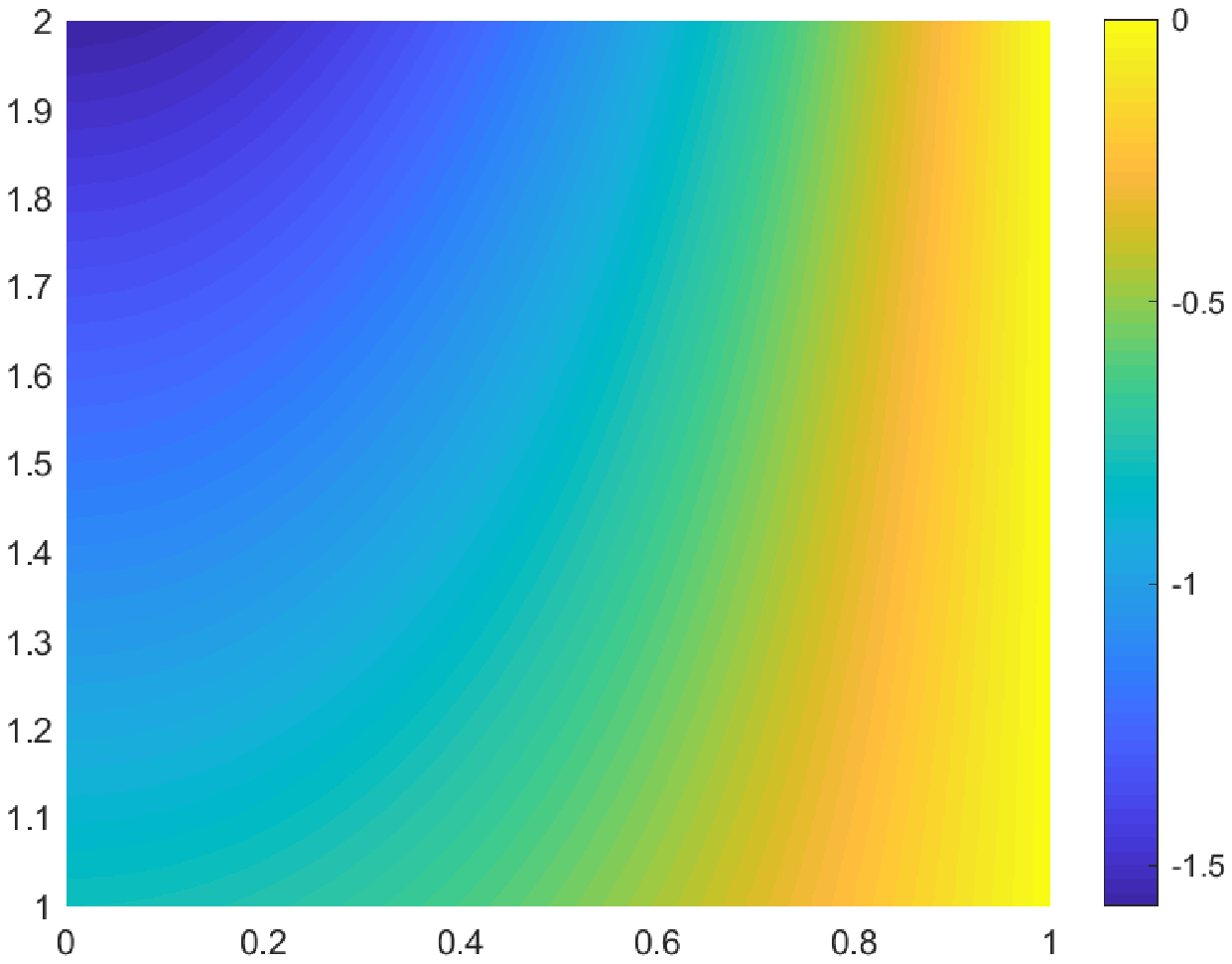}
    \end{minipage}%
  \caption{Numerical solution: $u_{S,h}^1$,$u_{S,h}^2$, $u_{D,h}^{1}$,$u_{D,h}^2$, $p_{S,h}$ and $p_{D,h}$ (left to right, top to bottom).}
  \label{ex1:solution}
\end{figure}

\begin{figure}[H]
\centering
\scalebox{0.3}{
\includegraphics[width=20cm]{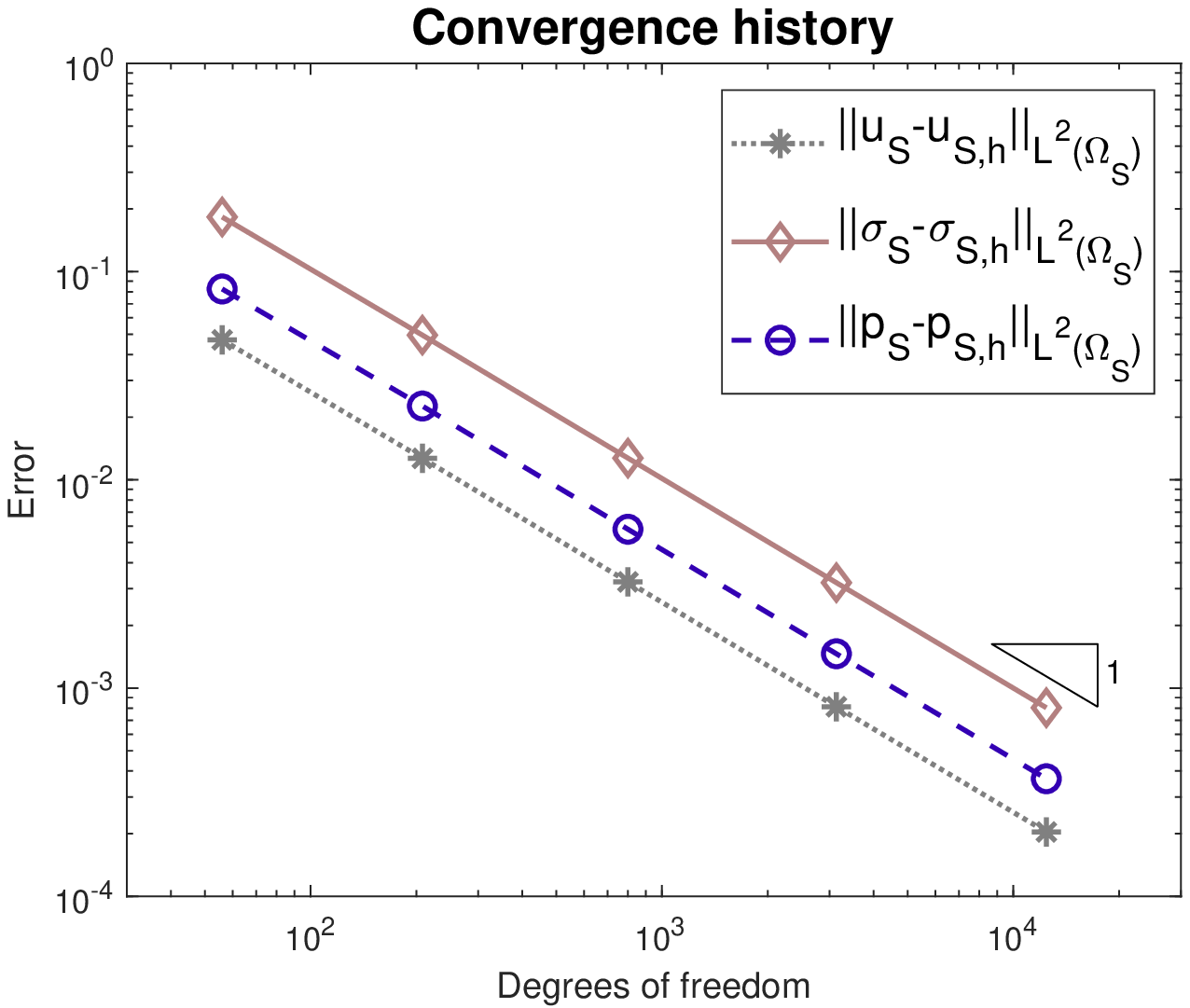}
}
\scalebox{0.3}{
\includegraphics[width=20cm]{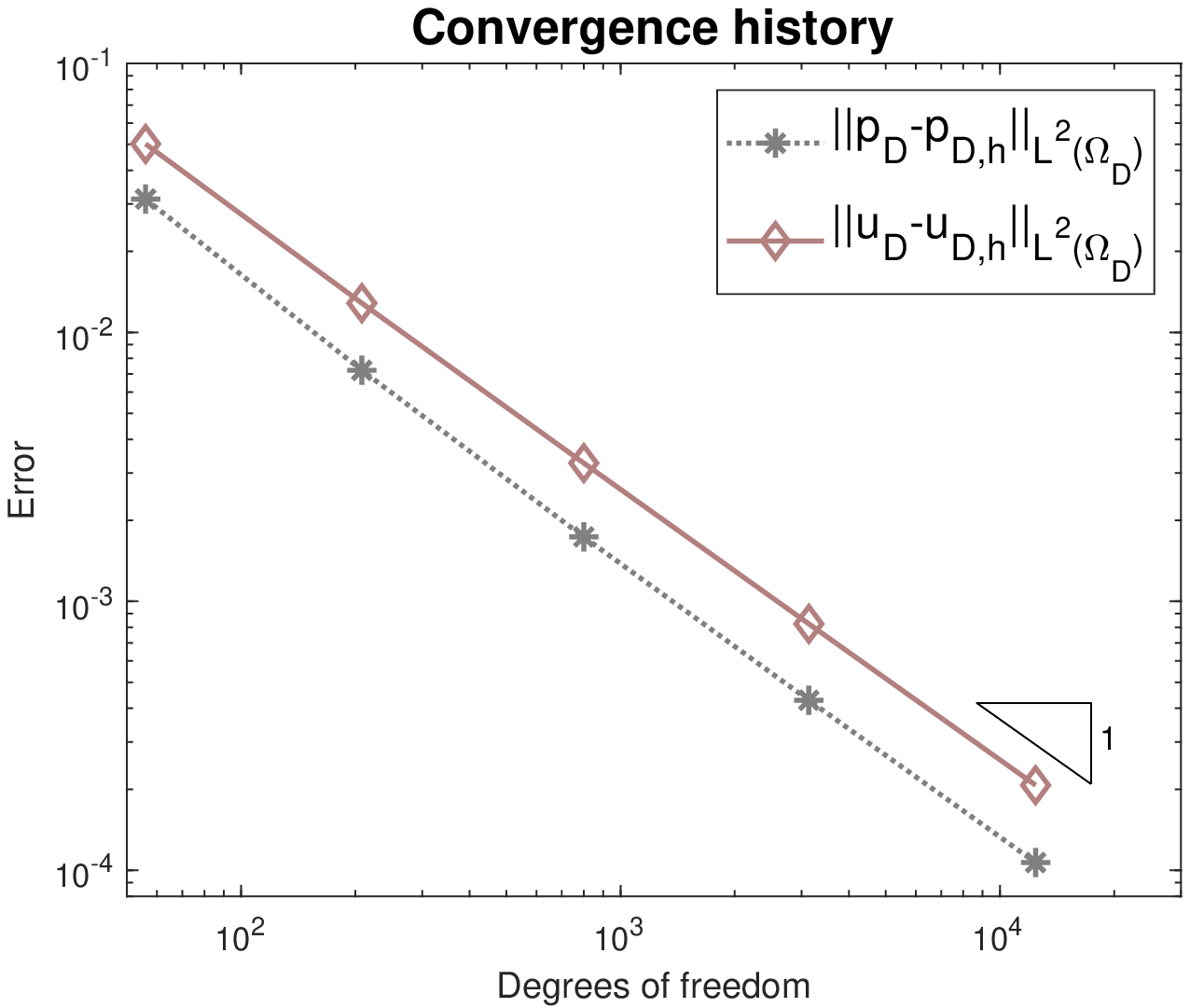}
}
\caption{Convergence history for Example~\ref{ex1}.}
\label{ex1-con}
\end{figure}

%
%
%

Next, we test the convergence on the rectangular grids and highly distorted grids, see Figure~\ref{ex1:grid}. The convergence history for both cases are reported in Figure~\ref{ex1:consd} and Figure~\ref{ex1:condistort}, from which we can see that the optimal convergence rates can be achieved for both cases. In addition, the accuracy of the proposed method is slightly influenced by the distortion of the grids. Thus, we can conclude that the proposed method is robust in the sense that it can be flexibly applied to rough grids without losing the accuracy.
\begin{figure}[H]
\centering
\scalebox{0.3}{
\includegraphics[width=20cm]{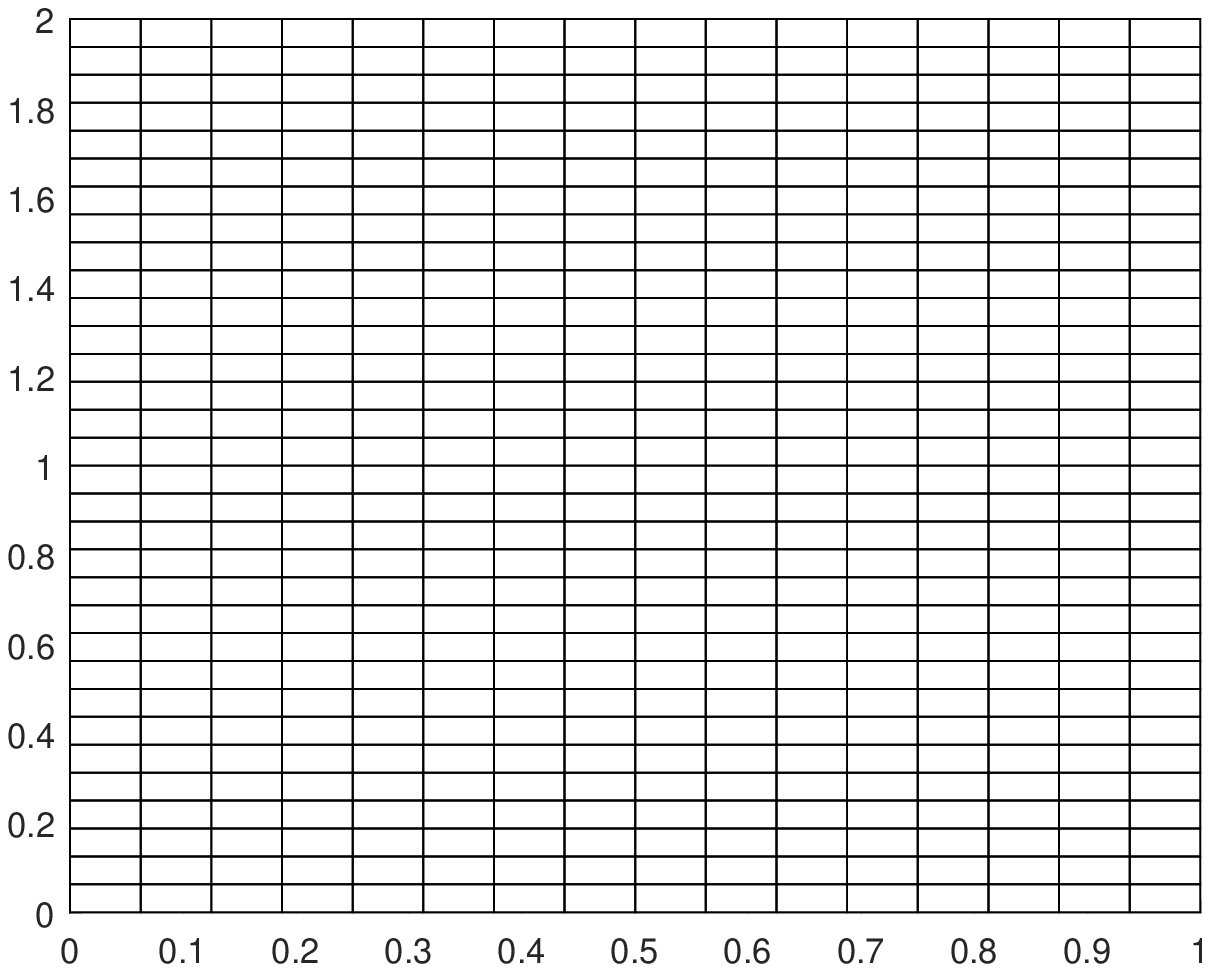}
}
\scalebox{0.3}{
\includegraphics[width=20cm]{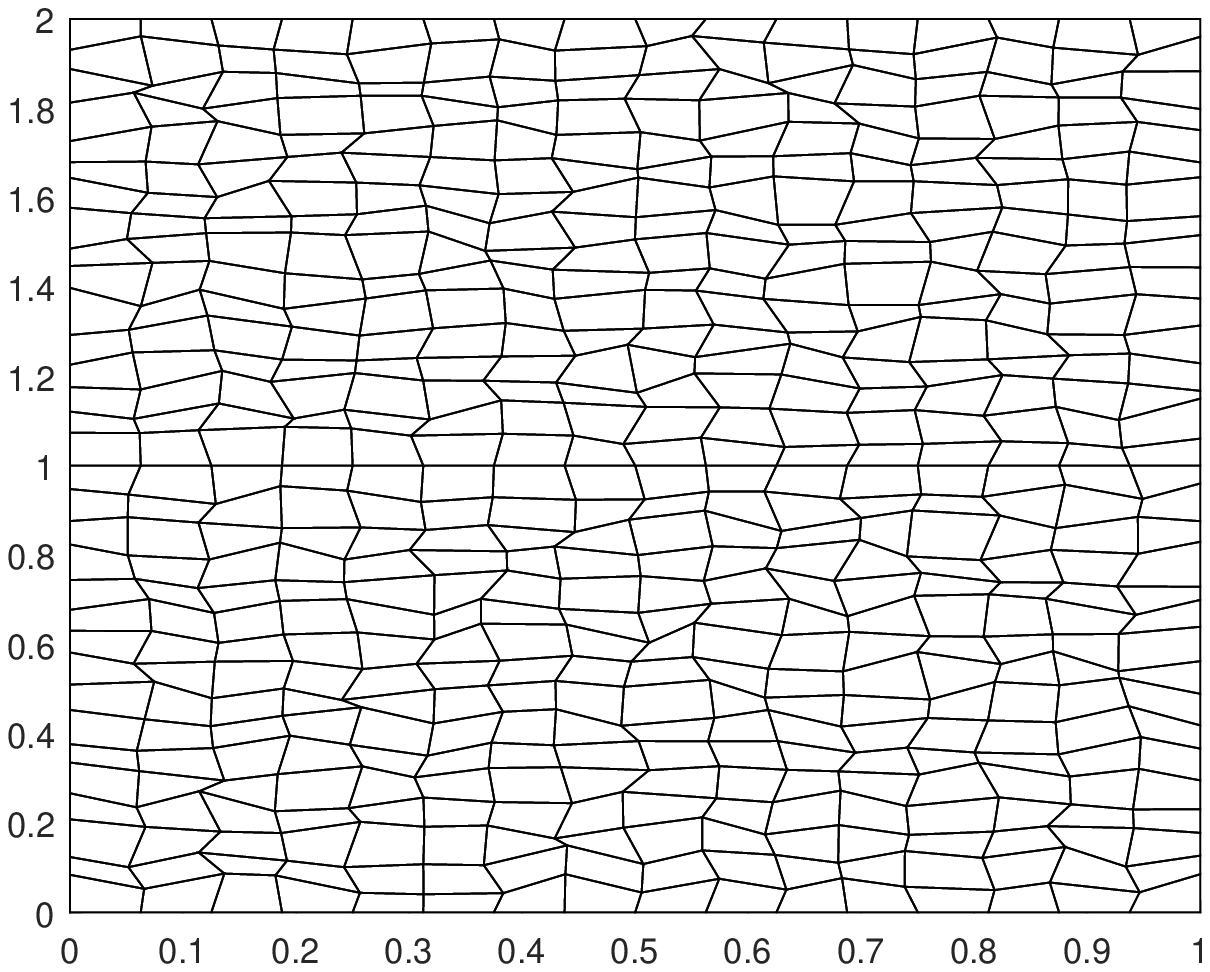}
}
\caption{Rectangular mesh (left) and highly distorted rectangular mesh (right).}
\label{ex1:grid}
\end{figure}

\begin{figure}[H]
\centering
\scalebox{0.3}{
\includegraphics[width=20cm]{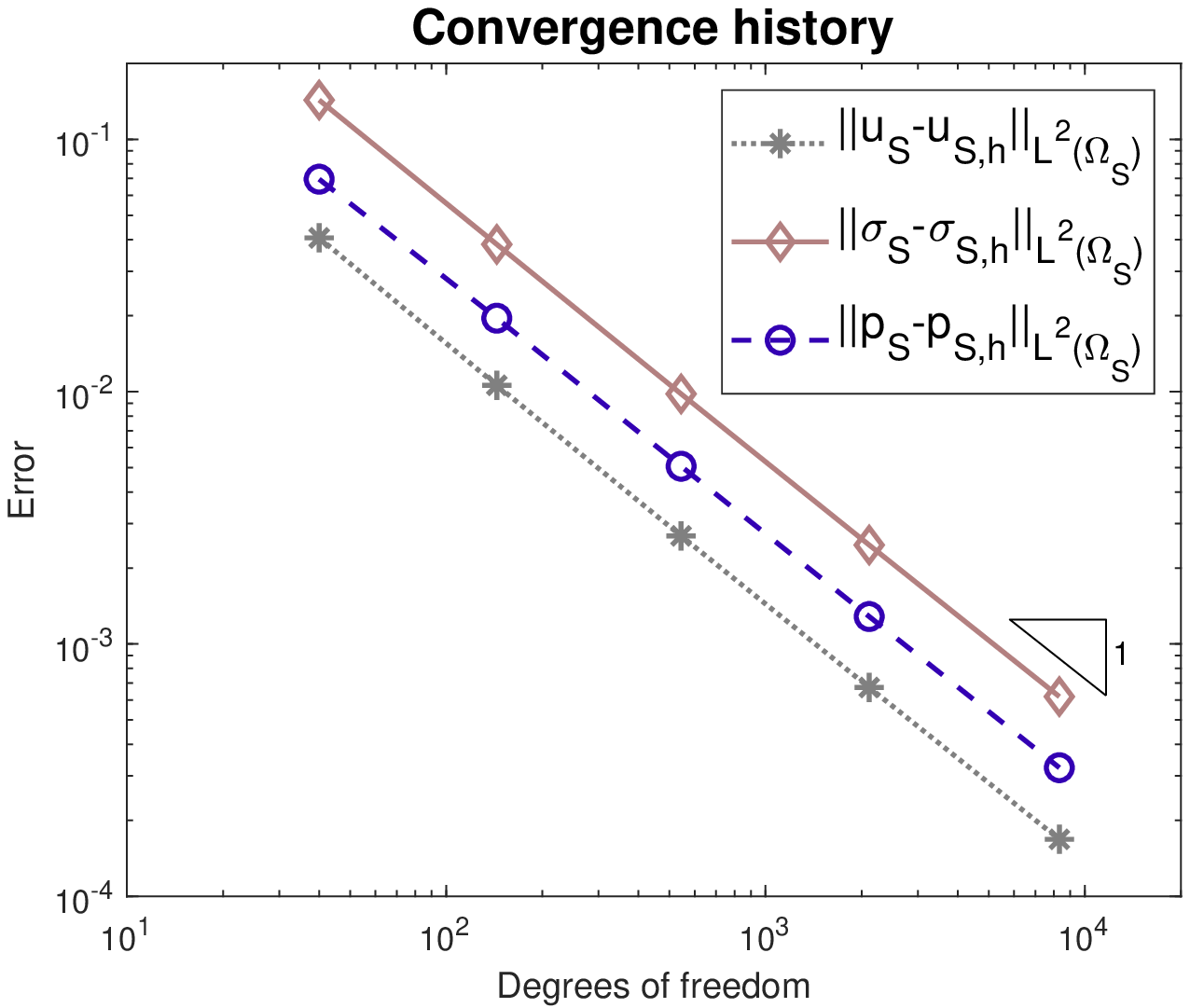}
}
\scalebox{0.3}{
\includegraphics[width=20cm]{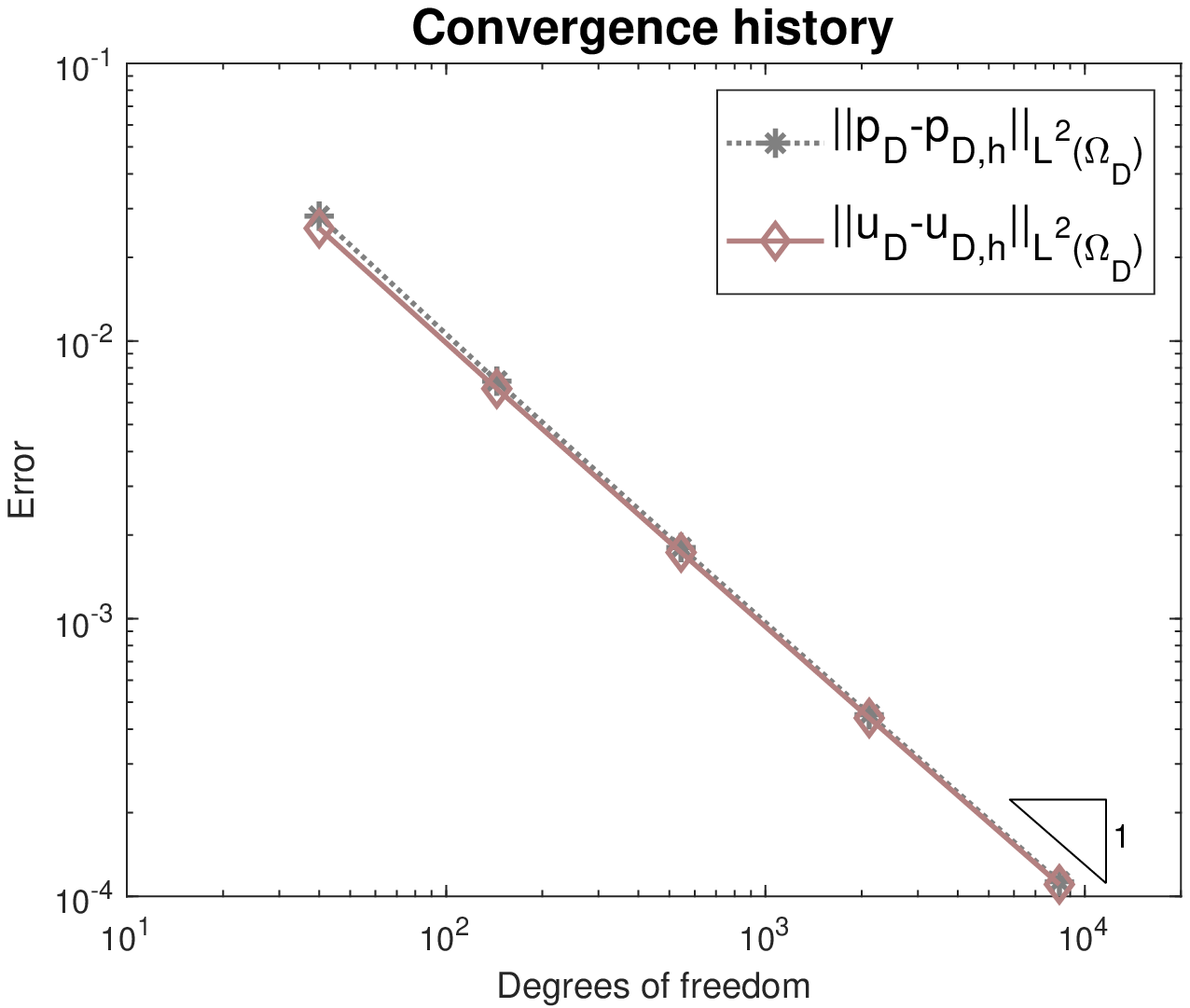}
}
\caption{Convergence history for rectangular mesh.}
\label{ex1:consd}
\end{figure}

\begin{figure}[H]
\centering
\scalebox{0.3}{
\includegraphics[width=20cm]{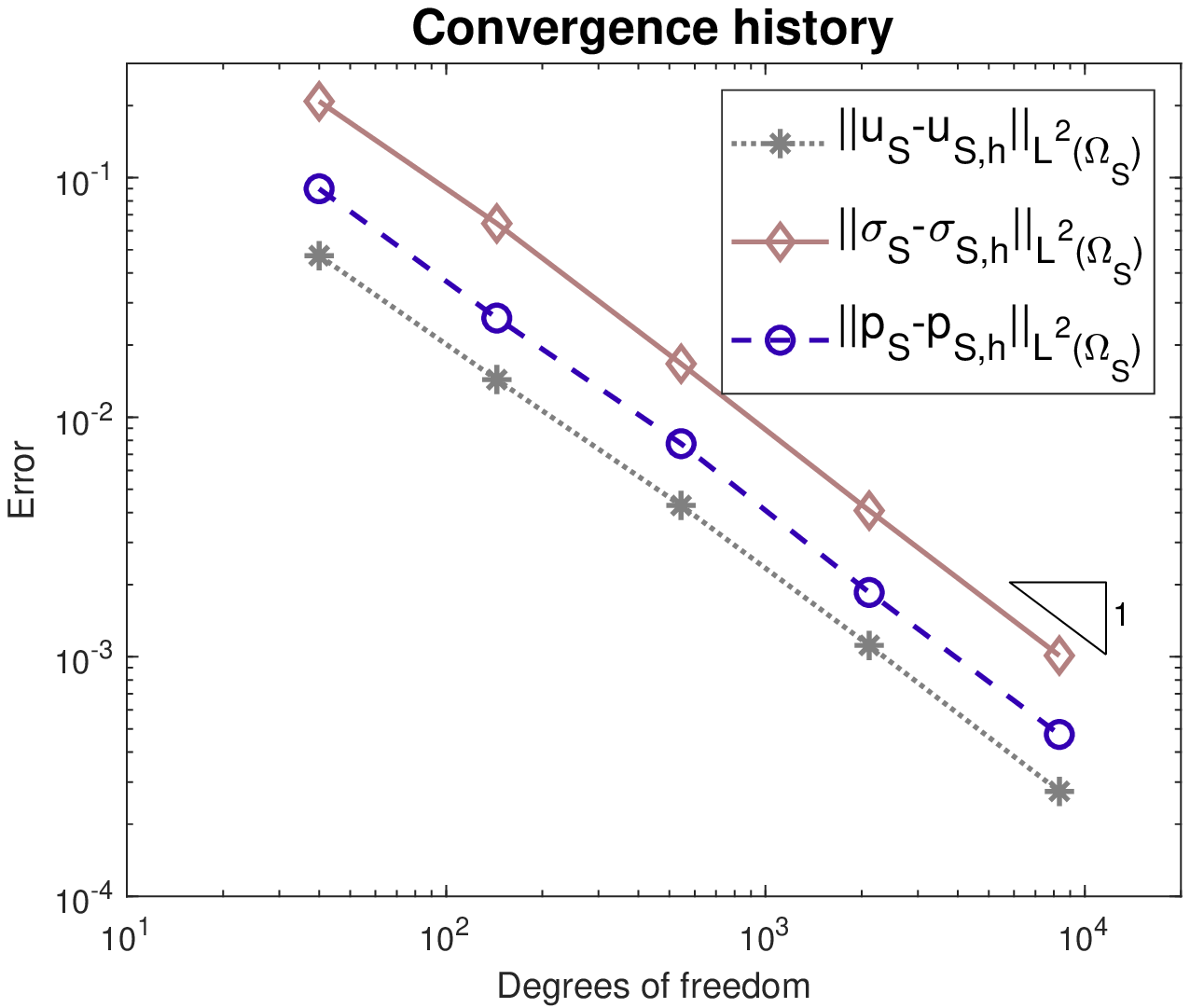}
}
\scalebox{0.3}{
\includegraphics[width=20cm]{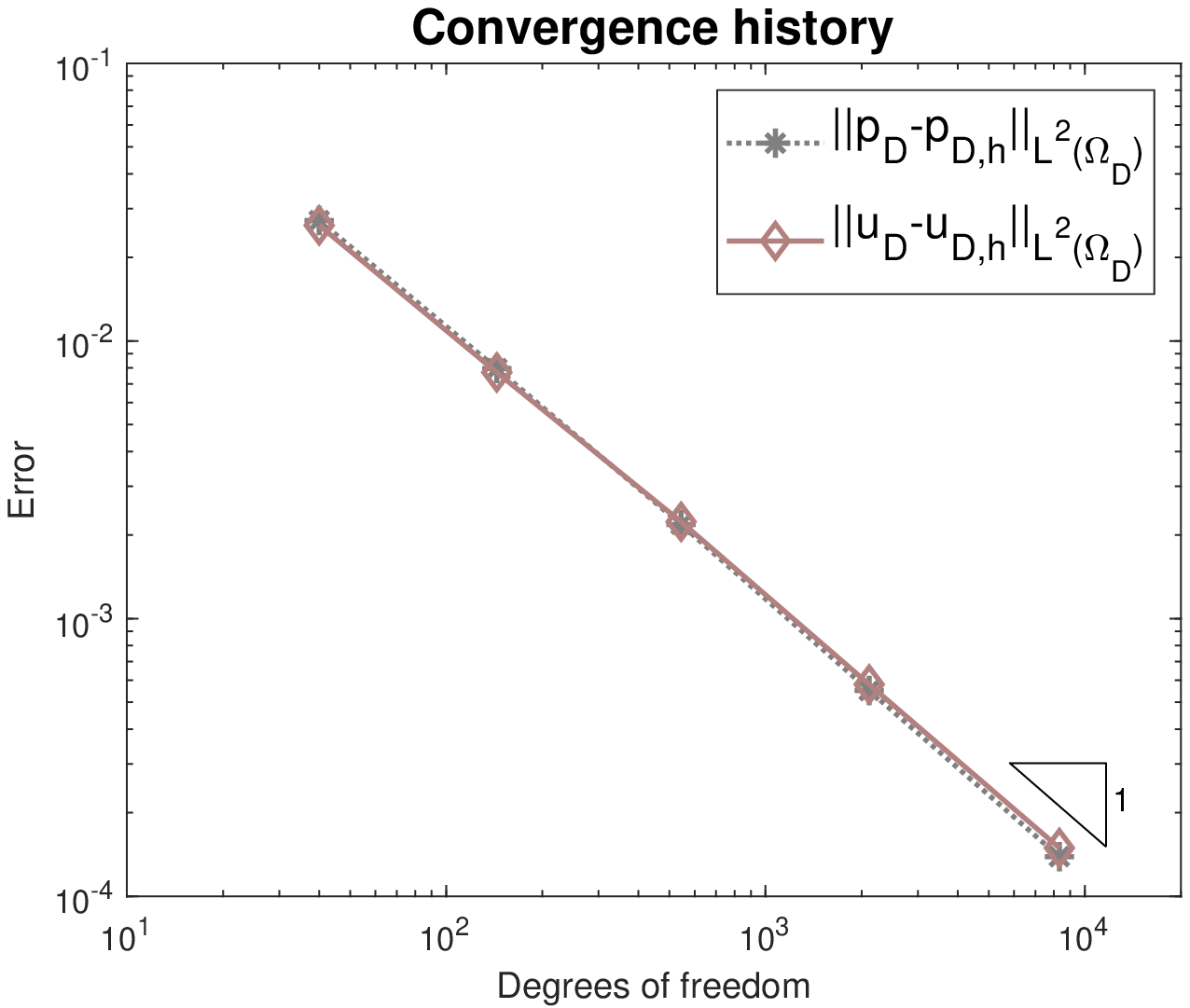}
}
\caption{Convergence history for highly distorted rectangular mesh.}
\label{ex1:condistort}
\end{figure}

To simplify the presentation, we only display the numerical results on triangular meshes for the following examples.

\begin{example}\label{ex2}
\end{example}

In this example, we consider $\Omega_S=(0,1)\times(0,1)$ and $\Omega_D=(0,1)\times(1,2)$. We set $K$ to be the identity tensor in $\mathbb{R}^{2\times 2}$, $\mu=\rho=\beta=\nu=1$ and the exact solution is given by
\begin{align*}\bm{u}_S=
\left(
  \begin{array}{c}
      x^2\pi\sin(2y\pi)(x - 1)^2 \\
    -2x\sin(y\pi)^2(2x - 1)(x - 1) \\
  \end{array}
\right),\quad p_S= (\cos(1)-1)\sin(1)+\cos(y)\sin(x)
\end{align*}
and
\begin{align*}\bm{u}_D=
\left(
  \begin{array}{c}
      \sin(\pi x)\sin(\pi y); \\
     -2x\sin(y\pi)^2(2x - 1)(x - 1) \\
  \end{array}
\right),\quad p_D=\sin(\pi x)\cos(\pi y).
\end{align*}

The numerical approximations are reported in Figure~\ref{ex3:solution} and the convergence history for all the variables measured in $L^2$ errors can be found in Figure~\ref{ex3:con}. Again, the optimal convergence can be obtained.
\begin{figure}[H]
    \centering
    \begin{minipage}[b]{0.4\textwidth}
      \includegraphics[width=1\textwidth]{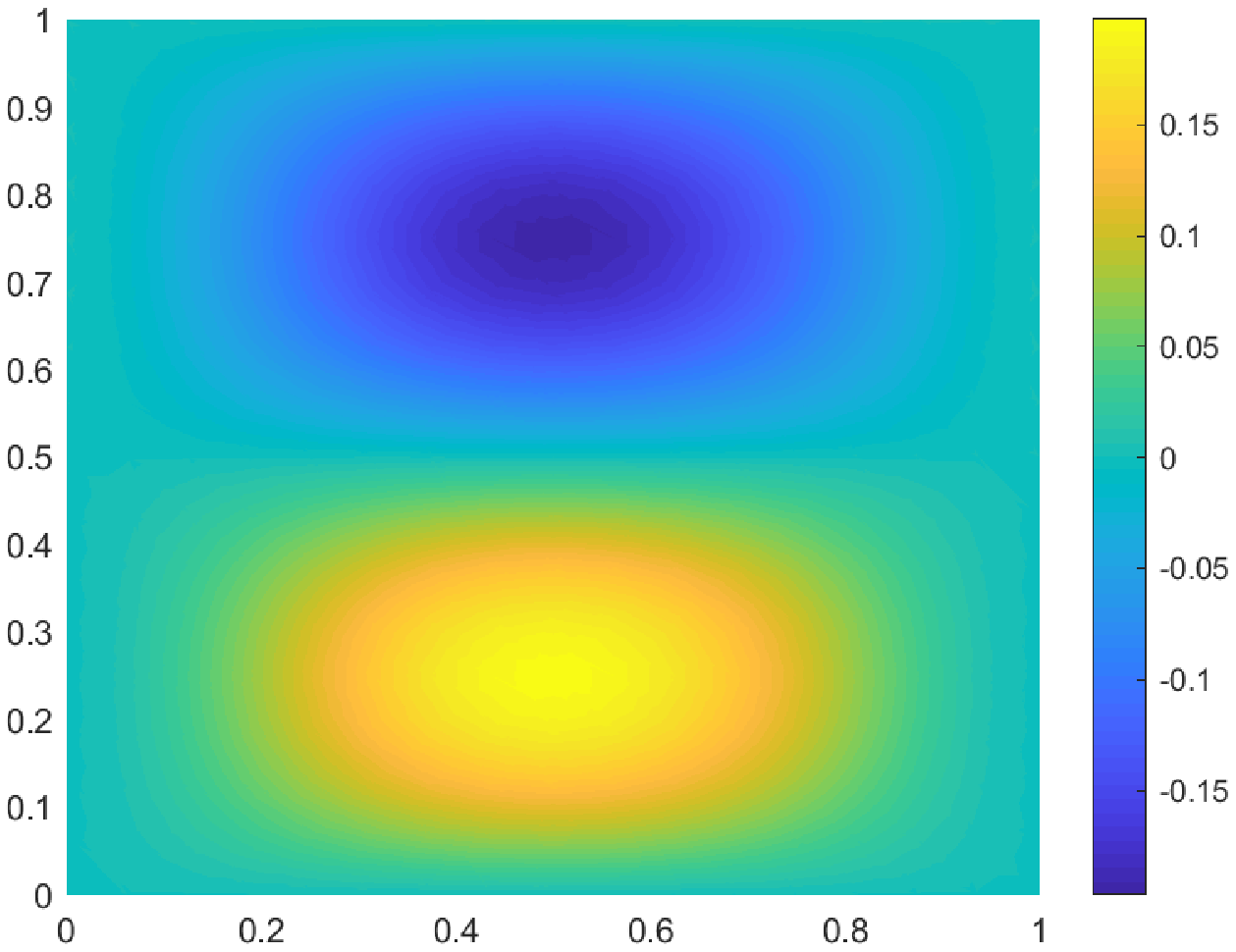}
    \end{minipage}%
    \begin{minipage}[b]{0.4\textwidth}
      \includegraphics[width=1\textwidth]{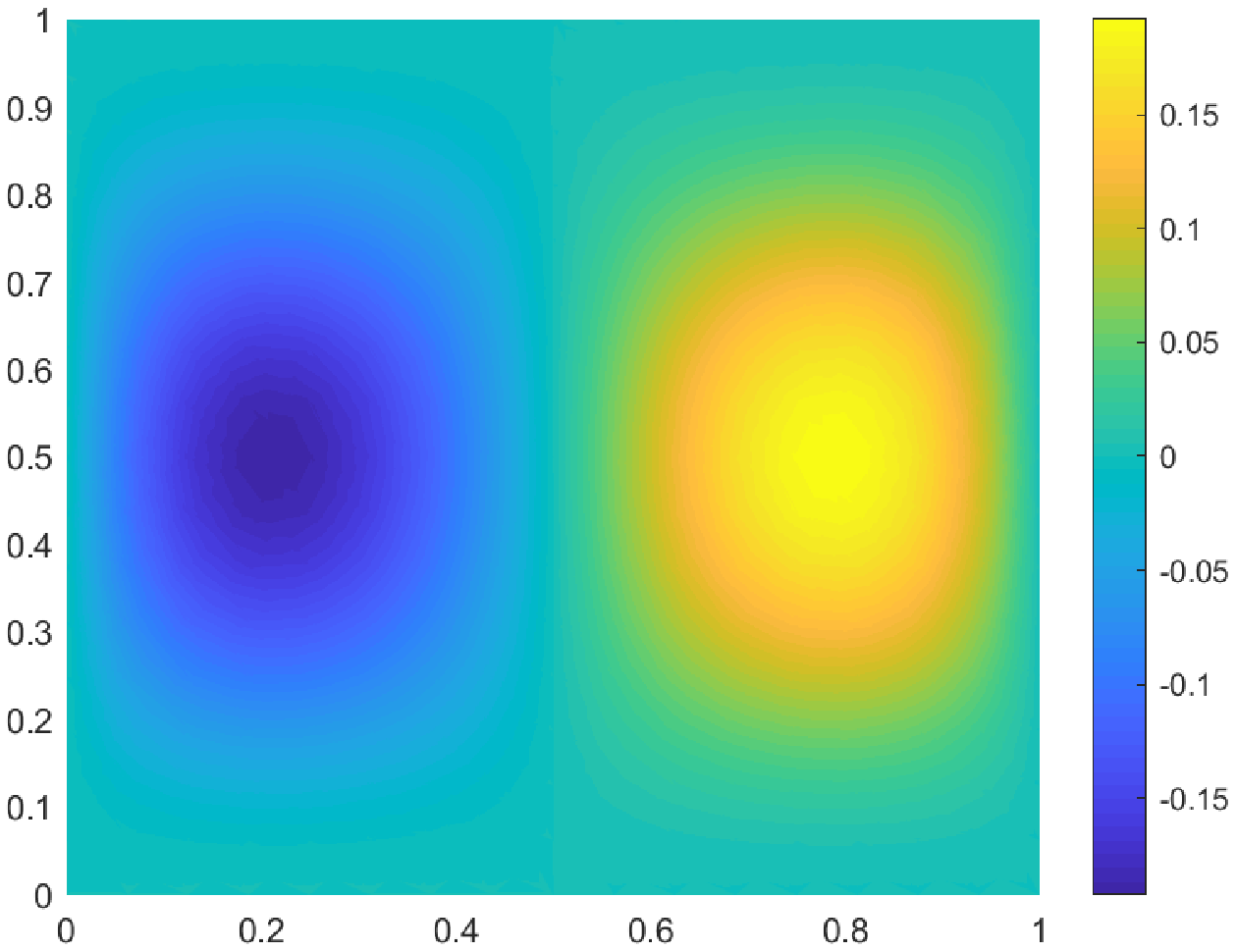}
    \end{minipage}
    \begin{minipage}[b]{0.4\textwidth}
      \includegraphics[width=1\textwidth]{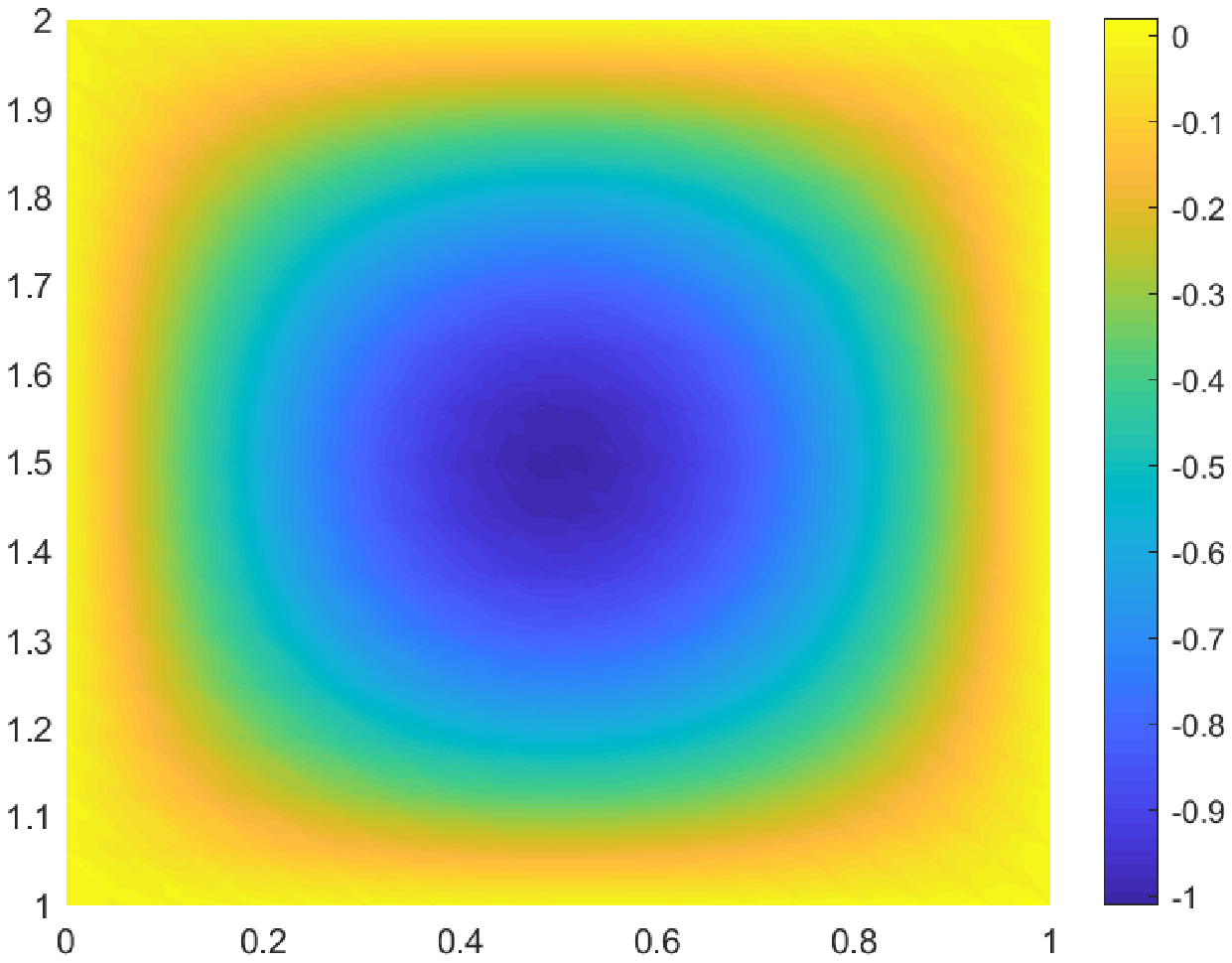}
    \end{minipage}
     \begin{minipage}[b]{0.4\textwidth}
      \includegraphics[width=1\textwidth]{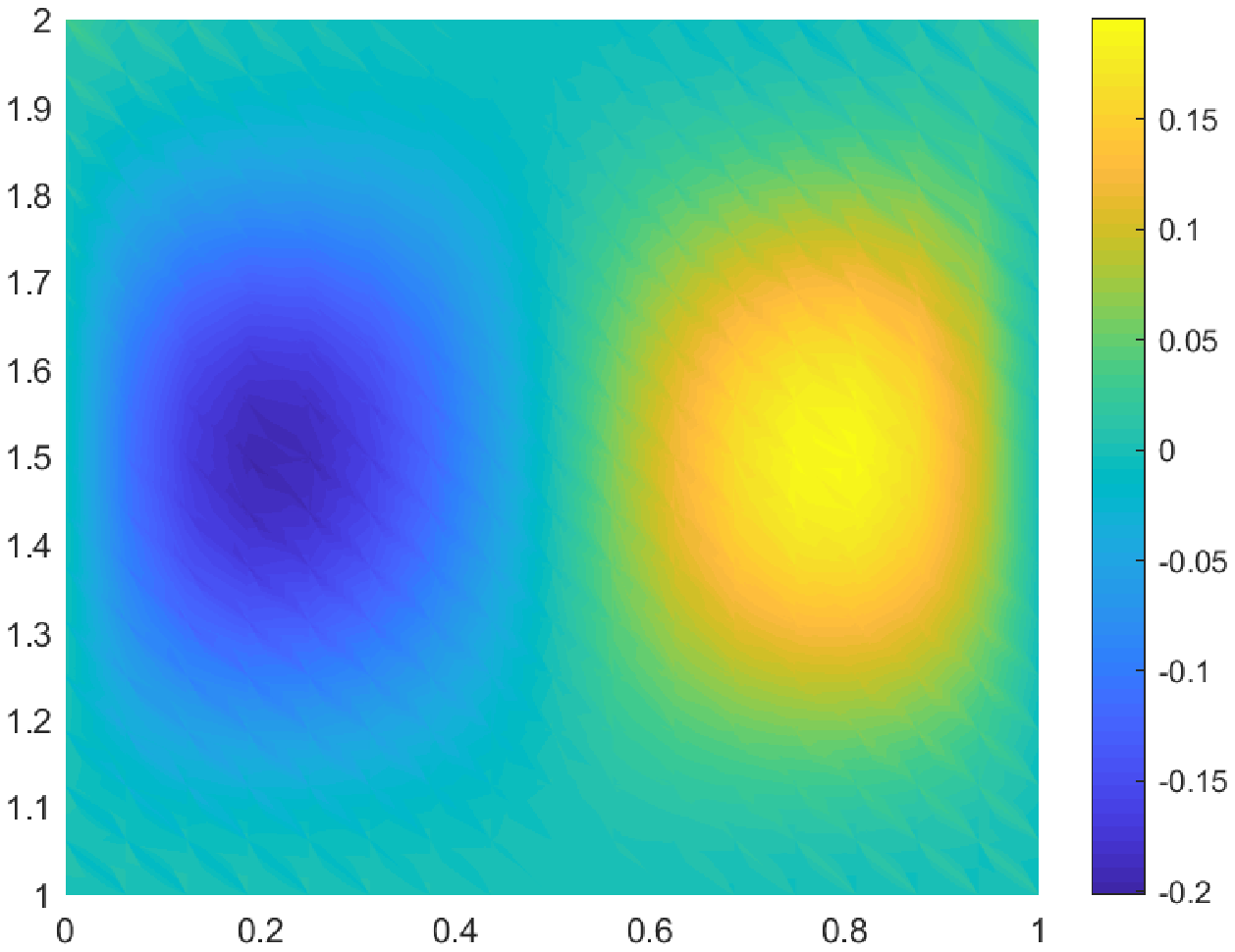}
    \end{minipage}
     \begin{minipage}[b]{0.4\textwidth}
      \includegraphics[width=1\textwidth]{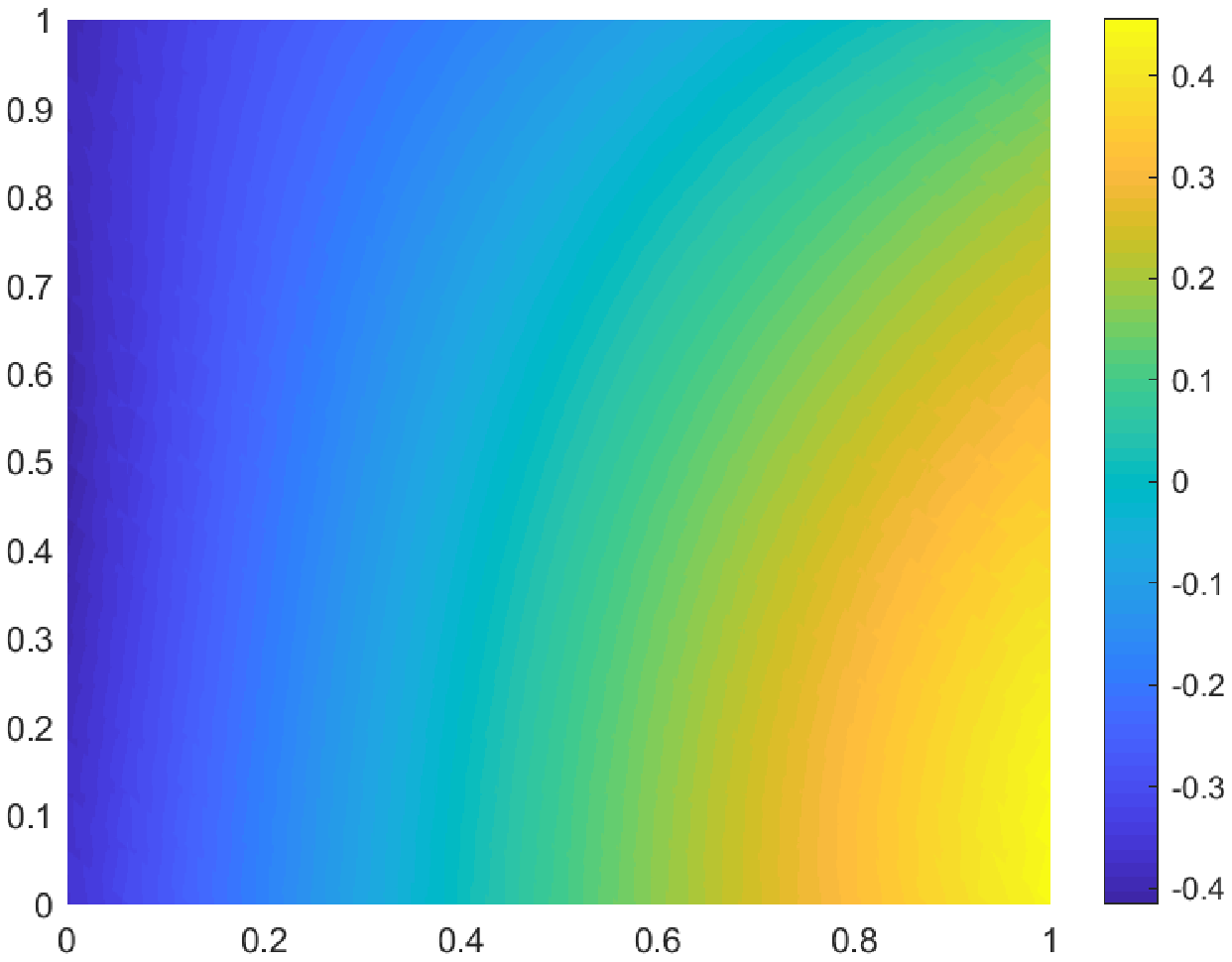}
    \end{minipage}
    \begin{minipage}[b]{0.4\textwidth}
      \includegraphics[width=1\textwidth]{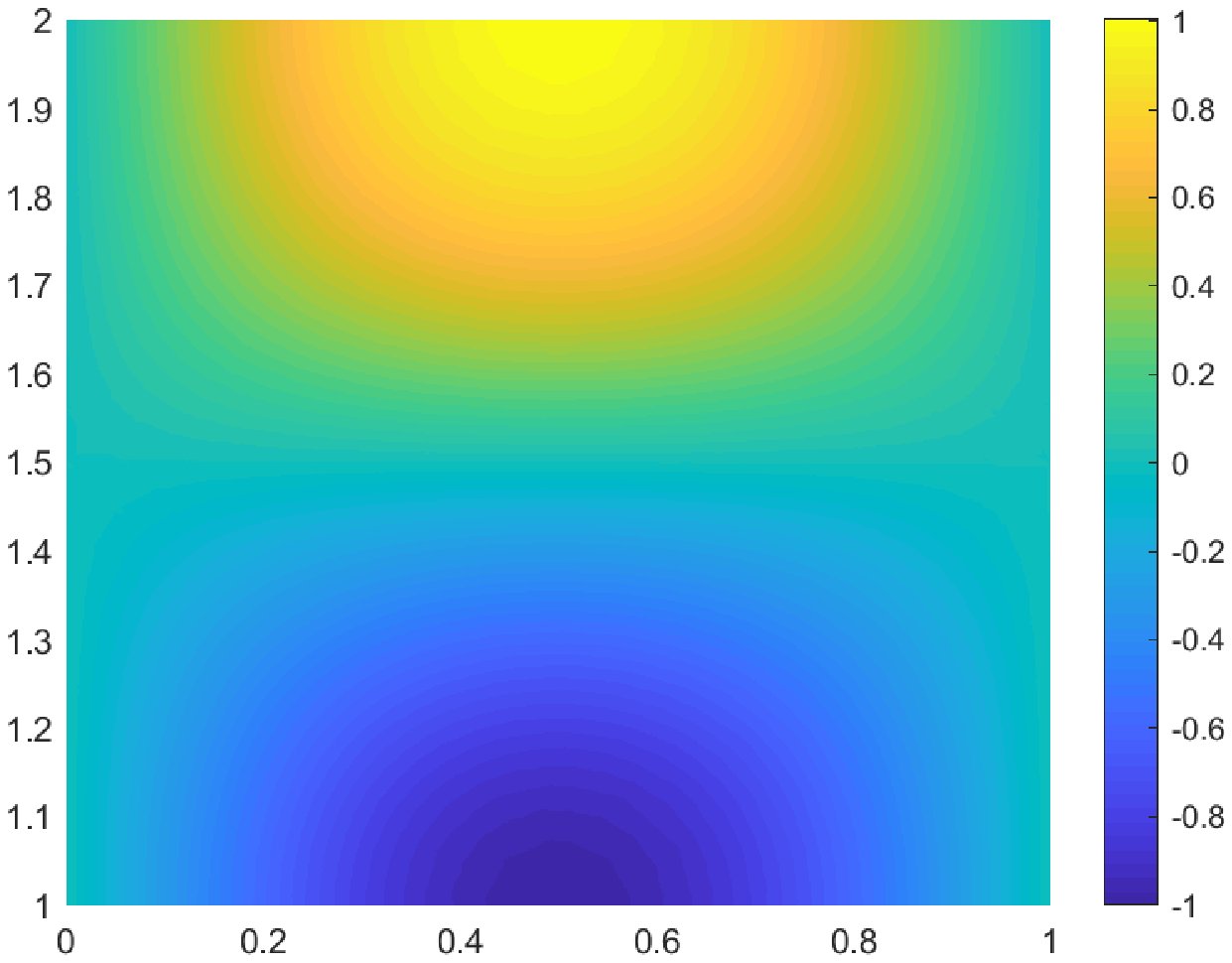}
    \end{minipage}%
  \caption{Numerical solution: $u_{S,h}^1$,$u_{S,h}^2$, $u_{D,h}^{1}$,$u_{D,h}^2$, $p_{S,h}$ and $p_{D,h}$ (left to right, top to bottom).}
\end{figure}
\label{ex3:solution}

\begin{figure}[H]
\centering
\scalebox{0.3}{
\includegraphics[width=20cm]{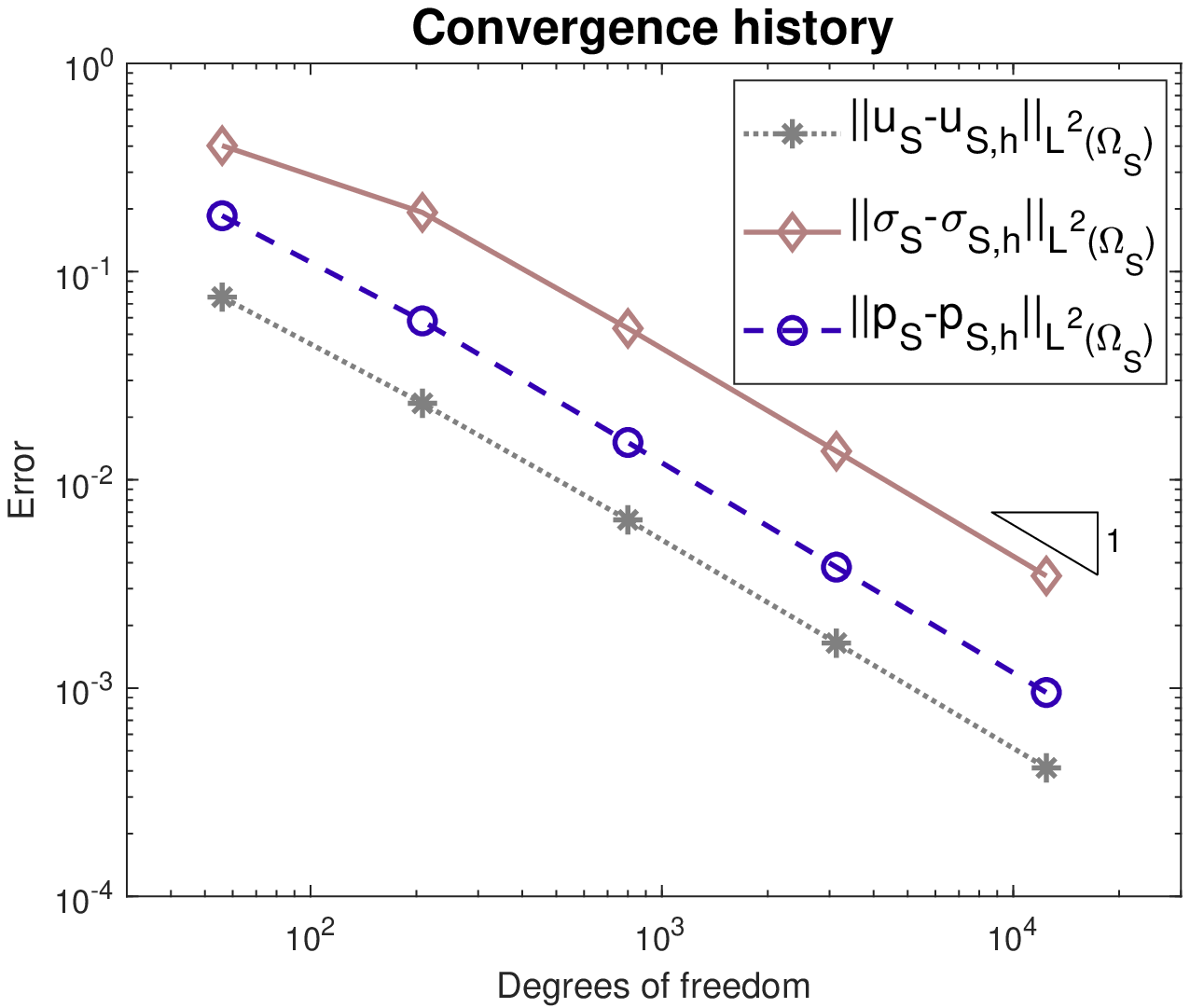}
}
\scalebox{0.3}{
\includegraphics[width=20cm]{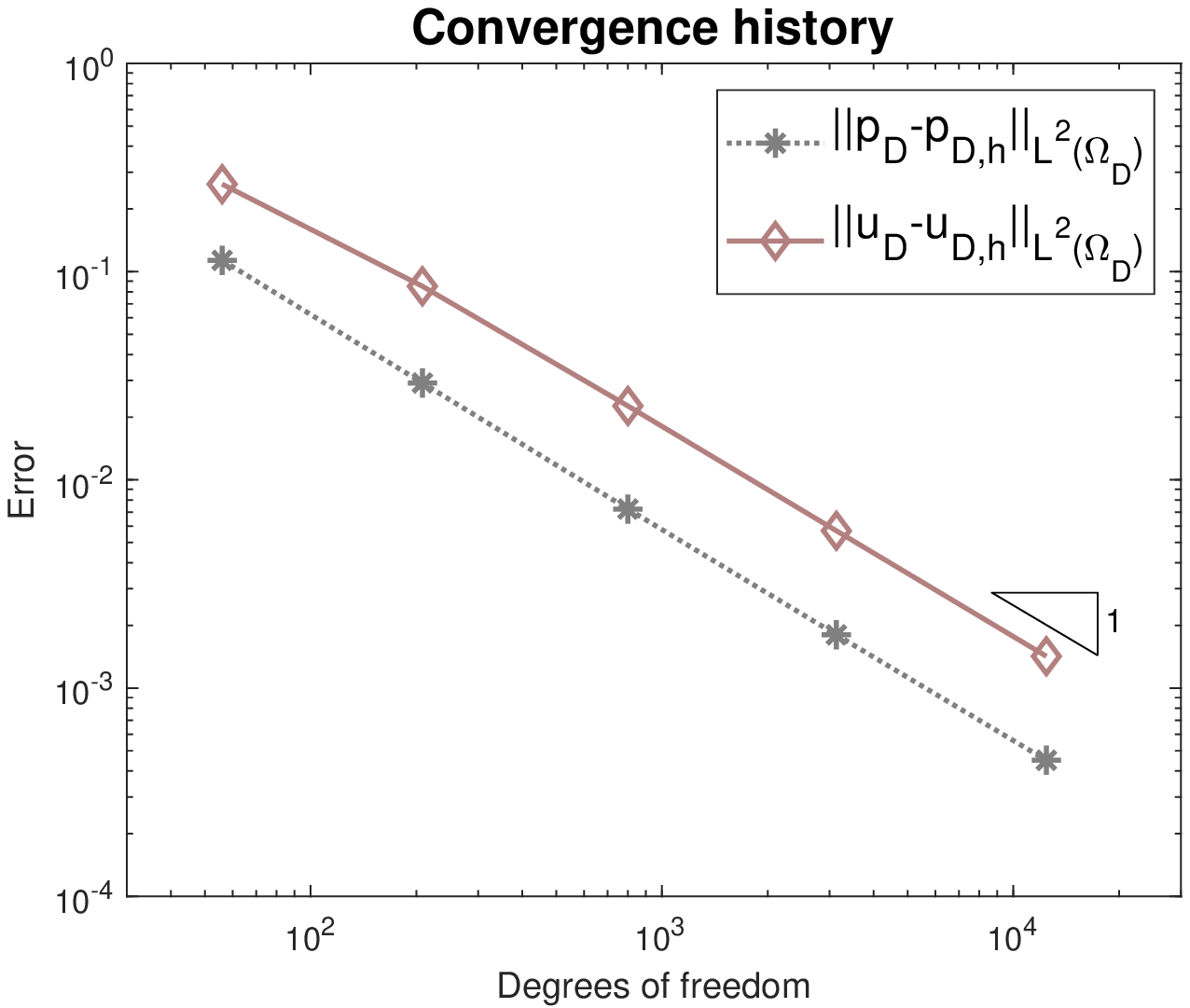}
}
\caption{Convergence history for Example~\ref{ex2}.}
\label{ex3:con}
\end{figure}

Next, we test more challenging examples by using highly oscillatory permeability in the next example.
\begin{example}\label{ex3}

\end{example}
We test our method with $\Omega_S=(0,1)\times(0,1/2)$ and $\Omega_D=(0,1)\times (1/2,1)$. We set $\mu=\rho=\beta=\nu=1$, and the highly oscillatory permeability $K^{-1}=\varrho I$ and $\varrho$ is defined by
\begin{align*}
\varrho= \frac{2+1.8\sin(2\pi x/\epsilon)}{2+1.8\sin(2\pi y/\epsilon)}+\frac{2+1.8\sin(2 \pi y/\epsilon)}{2+1.8\cos(2\pi x/\epsilon)},
\end{align*}
where $\epsilon=1/16$. The profile of $\varrho$ is shown in Figure~\ref{ex4-a}, see also \cite{HouWu97}.
\begin{figure}[H]
\centering
\includegraphics[width=6cm]{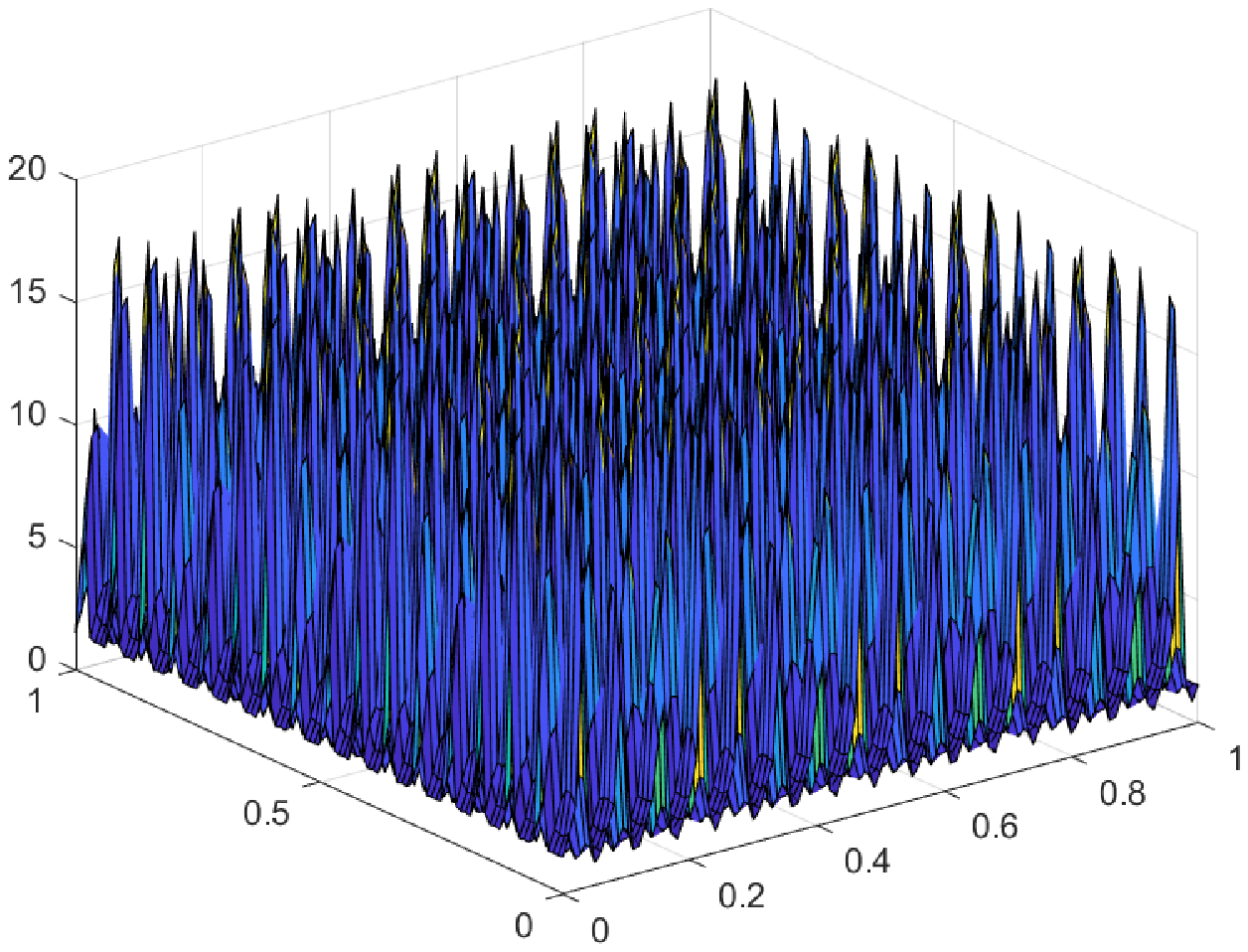}
\caption{Profile of $\varrho$.}
\label{ex4-a}
\end{figure}
The exact solution is given by
\begin{align*}\bm{u}_S=
\left(
  \begin{array}{c}
    16y\cos(\pi x)^2(y^2-0.25) \\
     8\pi\cos(\pi x)\sin(\pi x)(y^2-0.25)^2 \\
  \end{array}
\right),\quad p_S=x^2.
\end{align*}
and
\begin{align*}
\bm{u}_D=
\left(
  \begin{array}{c}
    \sin(2\pi x)\cos(2\pi y) \\
    -\cos(2\pi x)\sin(2\pi y) \\
  \end{array}
\right),\quad p_D=\cos(2 \pi x)\cos(2\pi y).
\end{align*}

The numerical approximations are shown in Figure~\ref{ex4:solution} and the convergence history against the number of degrees of freedom are displayed in Figure~\ref{ex4:con}. It can be observed that the optimal convergence can be achieved. This example once again highlights that our method can achieve the optimal convergence, in addition, it is robust with respect to the permeability.
\begin{figure}[H]
    \centering
    \begin{minipage}[b]{0.4\textwidth}
      \includegraphics[width=1\textwidth]{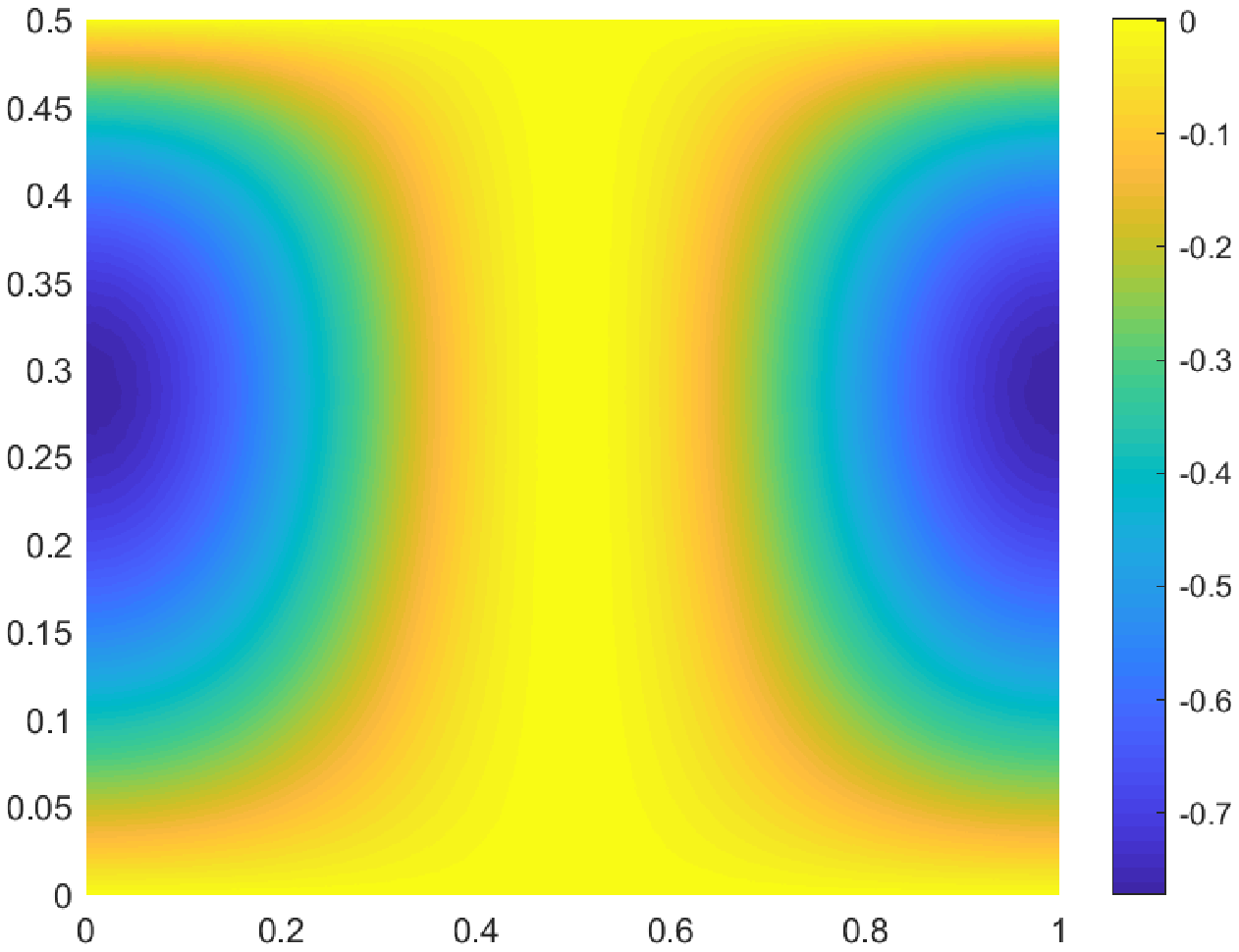}
    \end{minipage}%
    \begin{minipage}[b]{0.4\textwidth}
      \includegraphics[width=1\textwidth]{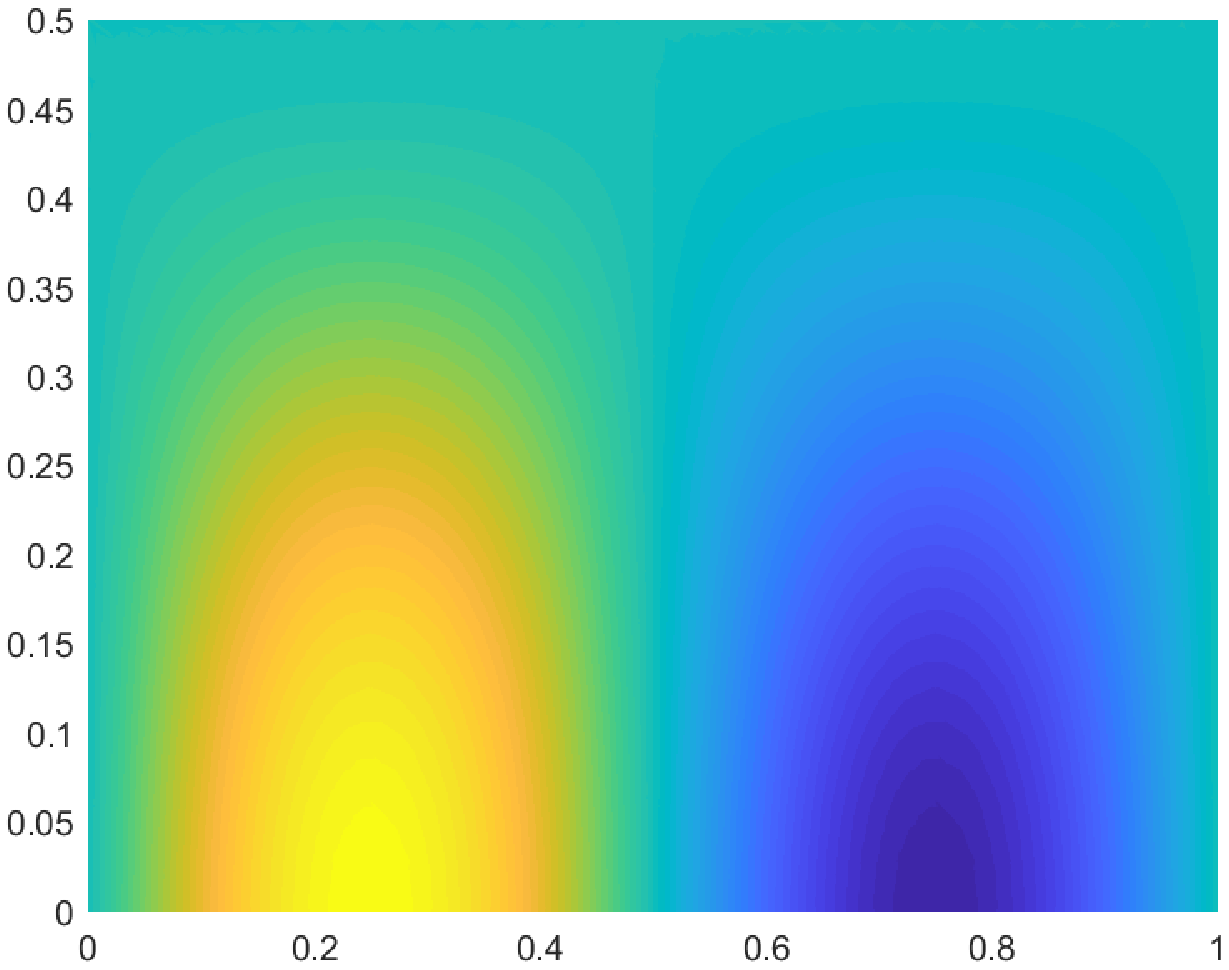}
    \end{minipage}
    \begin{minipage}[b]{0.4\textwidth}
      \includegraphics[width=1\textwidth]{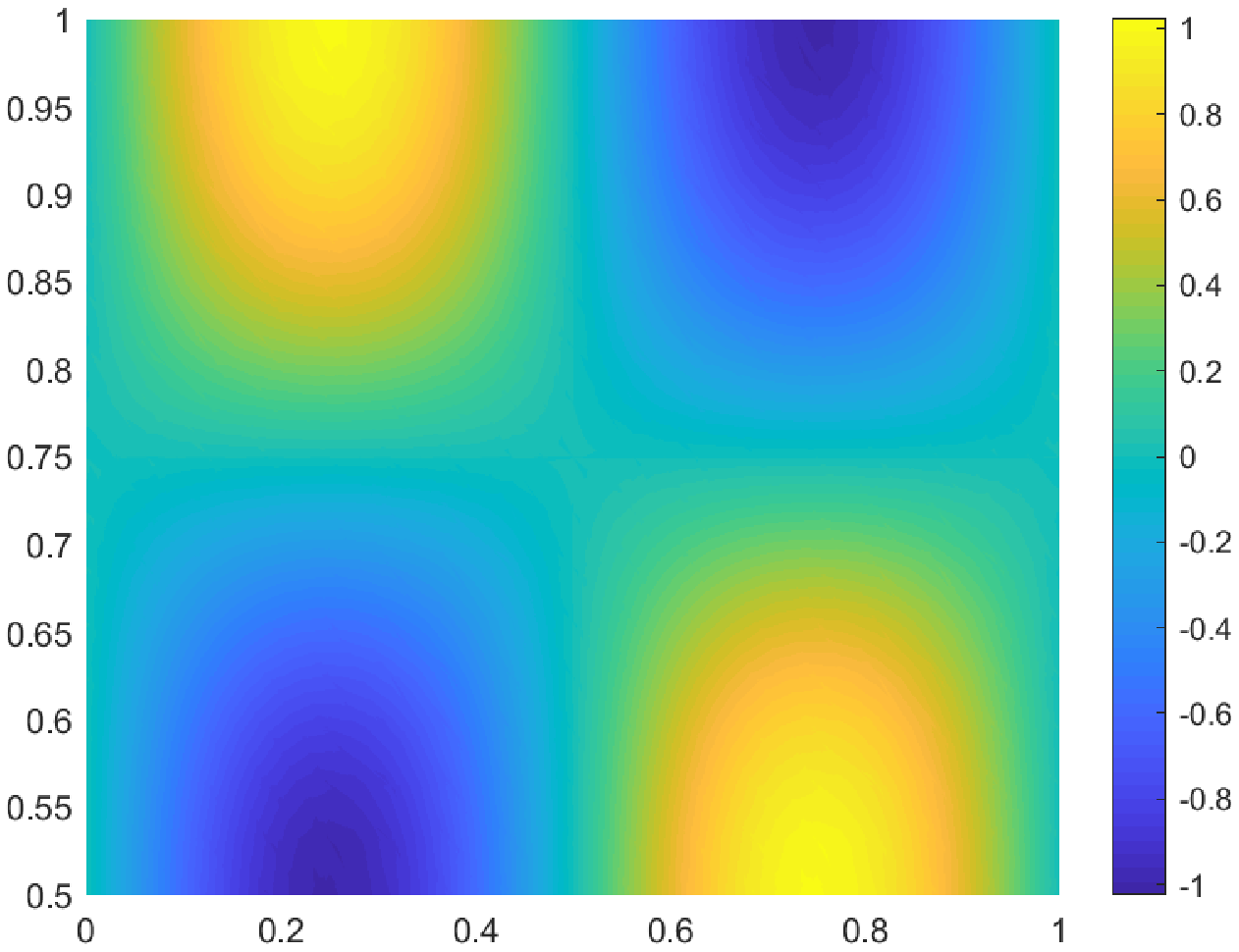}
    \end{minipage}
     \begin{minipage}[b]{0.4\textwidth}
      \includegraphics[width=1\textwidth]{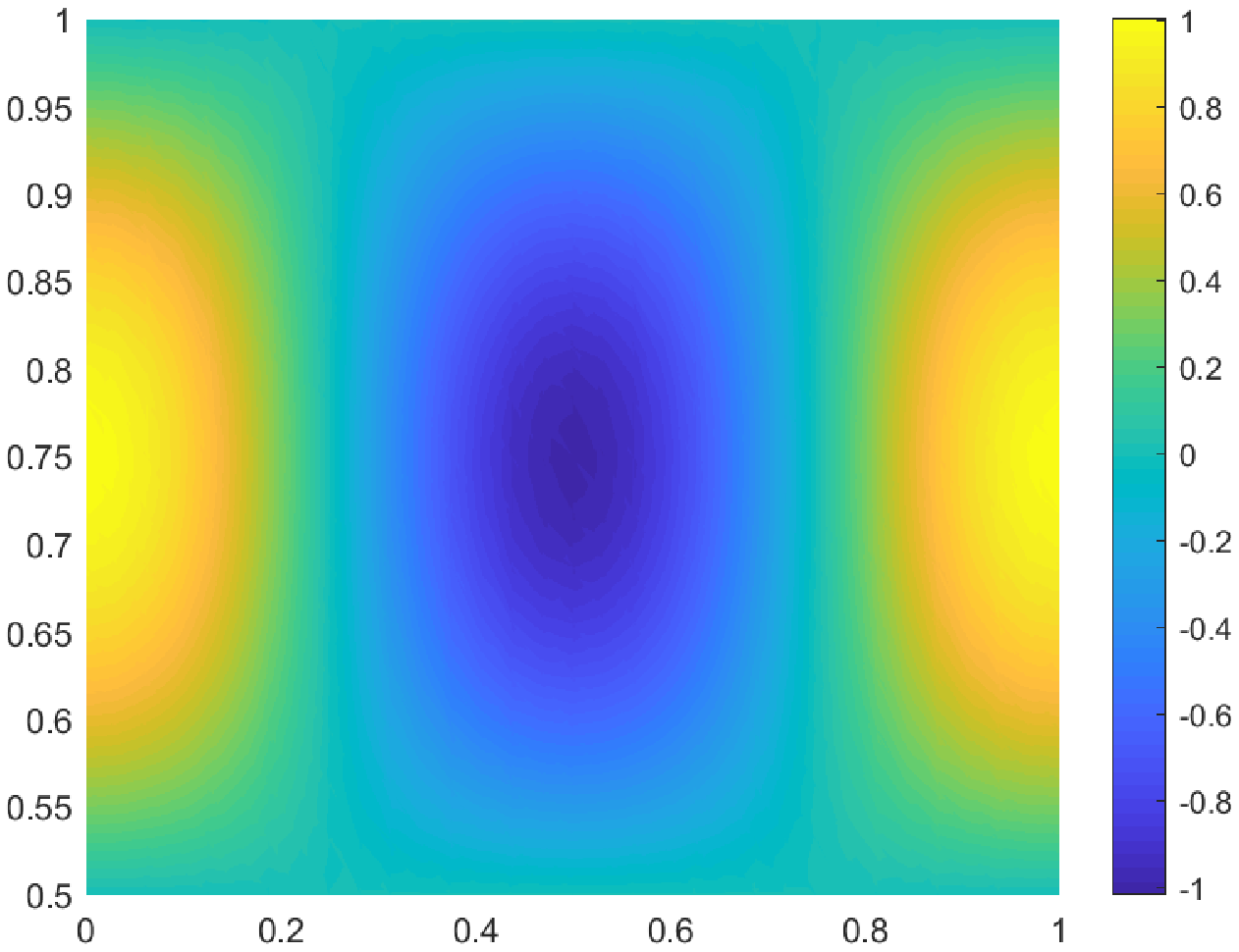}
    \end{minipage}
     \begin{minipage}[b]{0.4\textwidth}
      \includegraphics[width=1\textwidth]{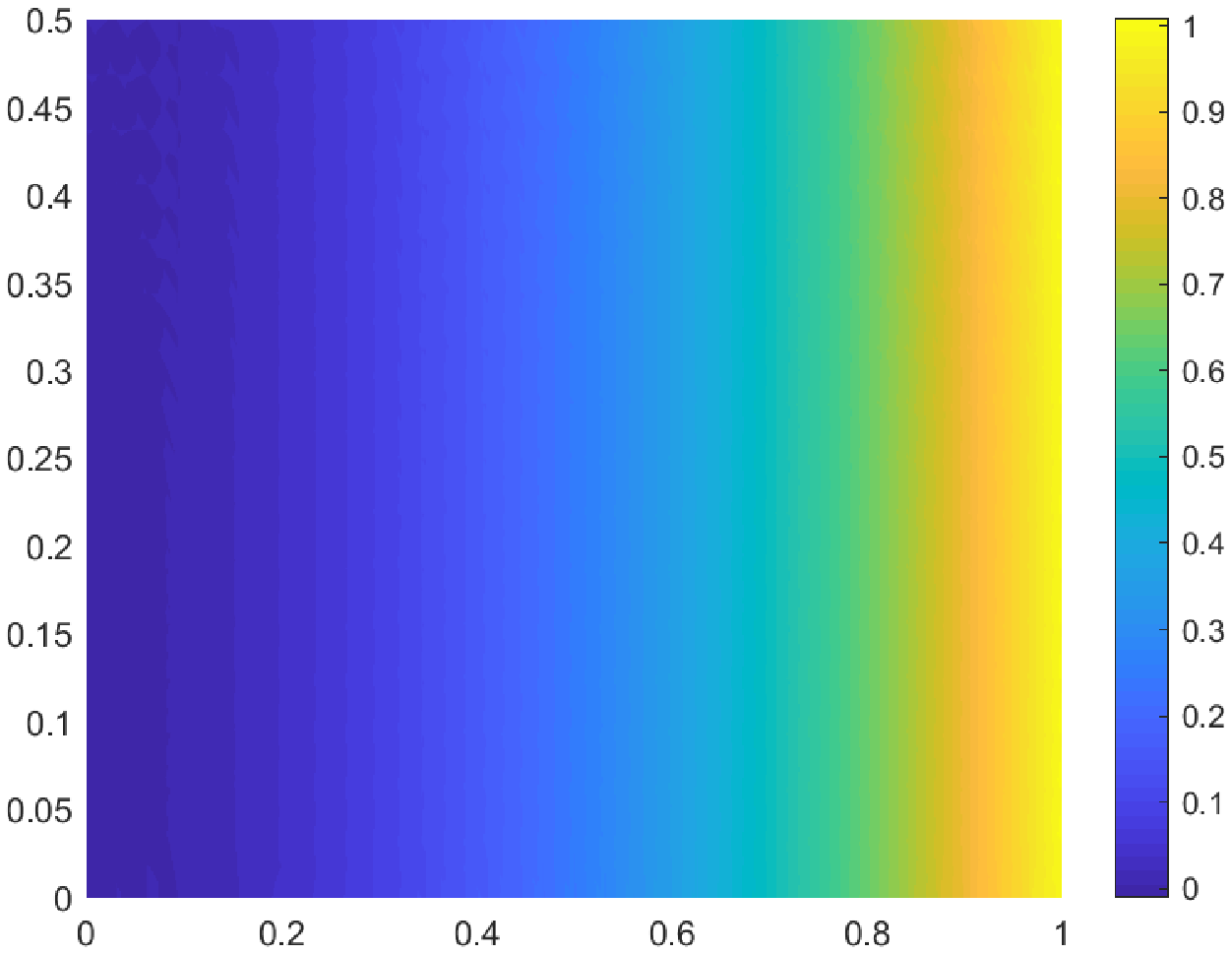}
    \end{minipage}
    \begin{minipage}[b]{0.4\textwidth}
      \includegraphics[width=1\textwidth]{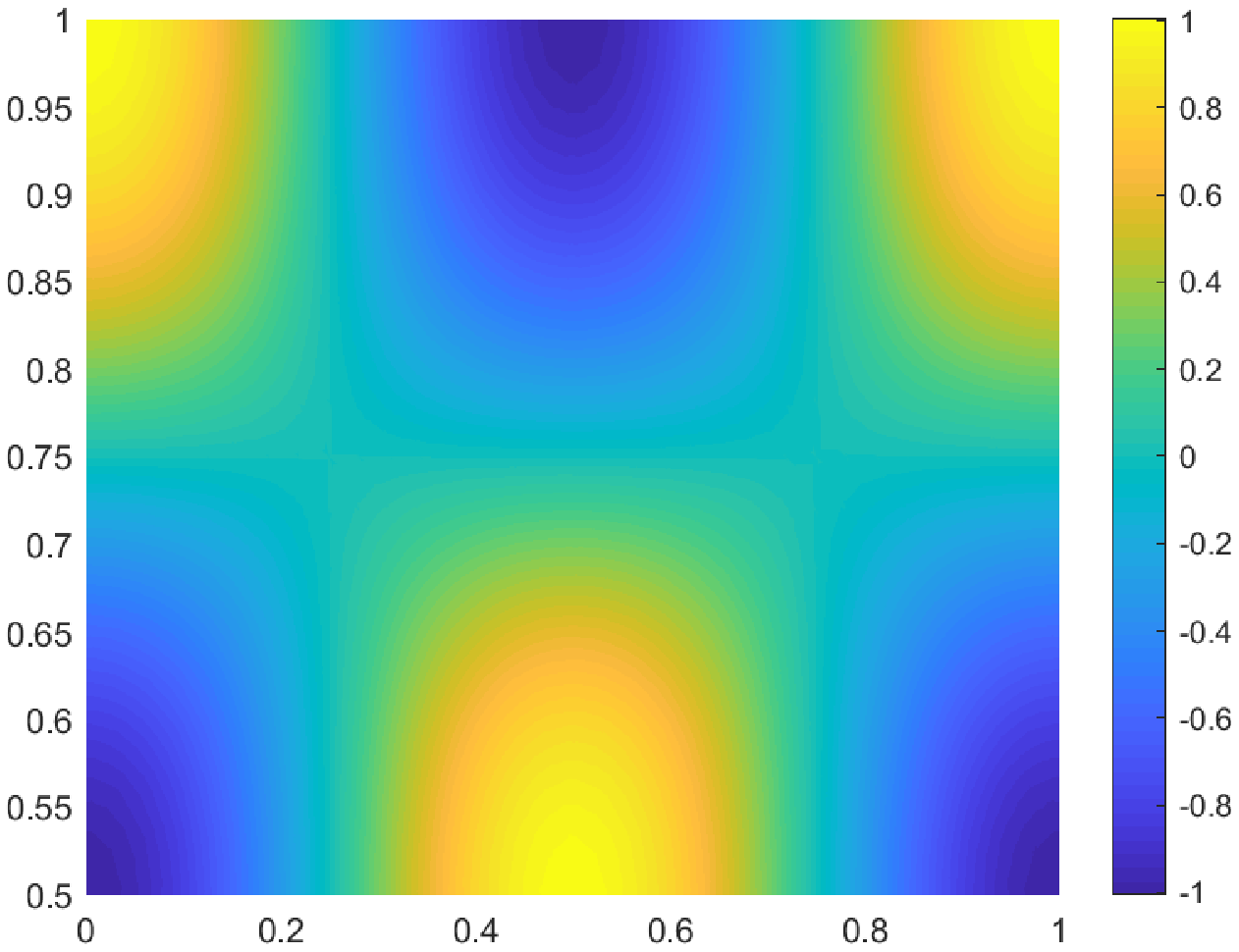}
    \end{minipage}%
  \caption{Numerical solution: $u_{S,h}^1$,$u_{S,h}^2$, $u_{D,h}^{1}$,$u_{D,h}^2$, $p_{S,h}$ and $p_{D,h}$ (left to right, top to bottom).}
  \label{ex4:solution}
\end{figure}

\begin{figure}[H]
\centering
\scalebox{0.3}{
\includegraphics[width=20cm]{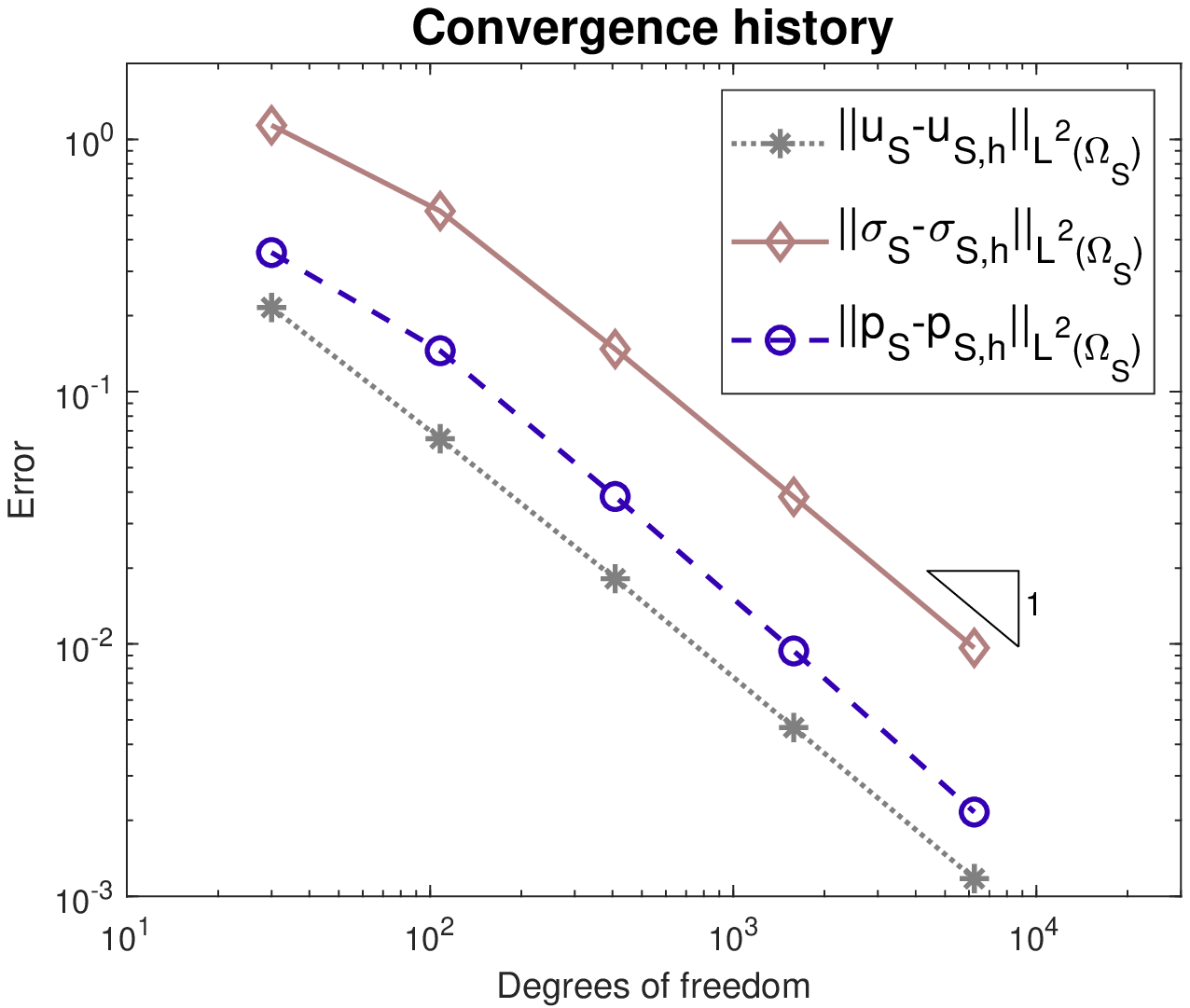}
}
\scalebox{0.3}{
\includegraphics[width=20cm]{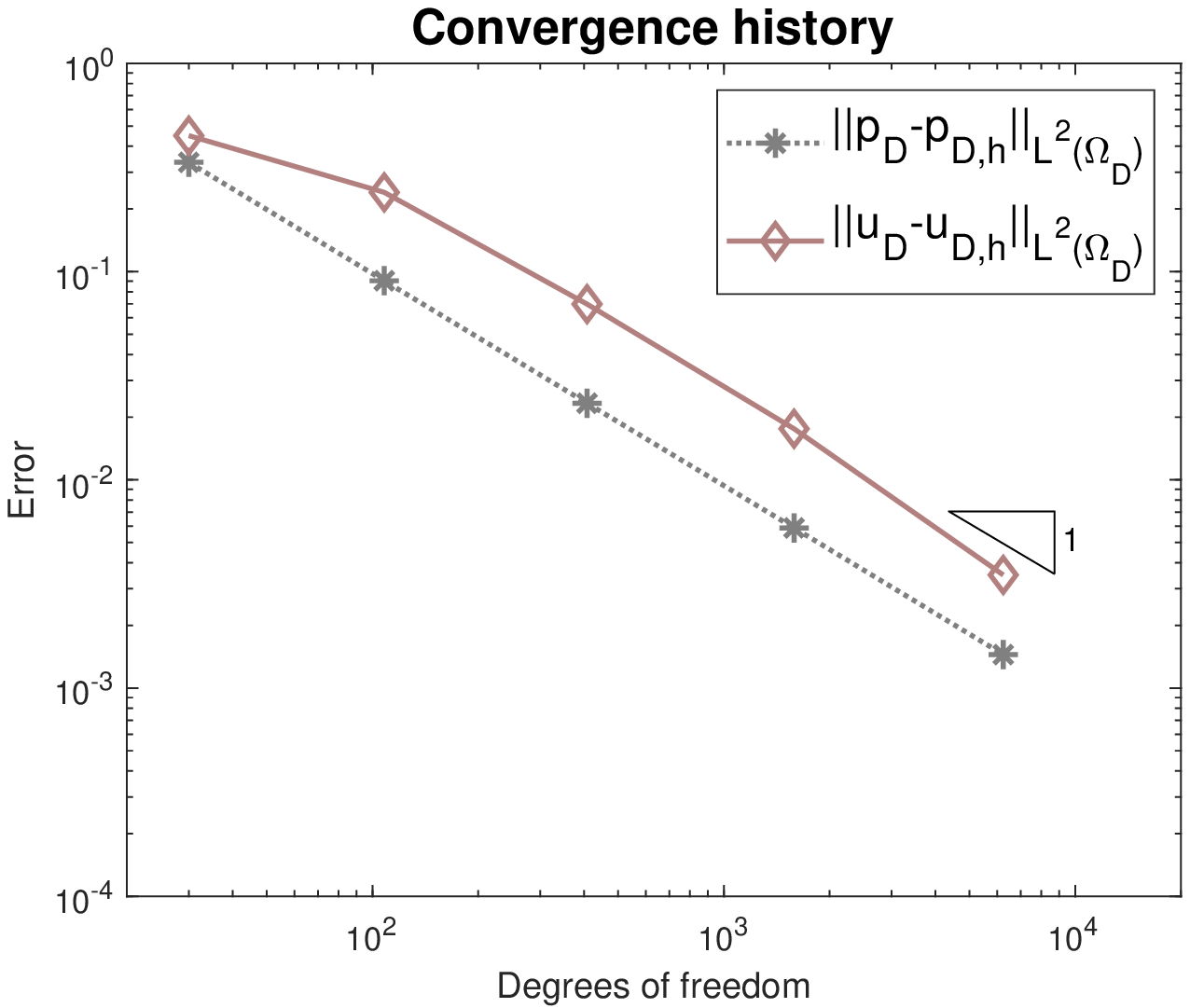}
}
\caption{Convergence history for Example~\ref{ex3}.}
\label{ex4:con}
\end{figure}

Finally, we exploit one example with high contrast permeability to illustrate that our method could be a viable option for the practical applications.
\begin{example}

\end{example}

Here, we consider the two-dimension domain with $\Omega_S=(-1/2,3/2)\times (0,2)$ and $\Omega_D=(-1/2,3/2)\times (-2,0)$. In the Stokes region, the Dirichlet boundary condition is given by Kovasznay flow (cf. \cite{Kovasznay48}).
\begin{align*}
\bm{u}_S=
\left(
  \begin{array}{c}
    1-e^{\lambda x}\cos(2\pi y) \\
    \frac{\lambda}{2\pi}e^{\lambda x}\sin(2\pi y) \\
  \end{array}
\right),
\end{align*}
where $\lambda =\frac{-8\pi^2}{\epsilon^{-1}+\sqrt{\epsilon^{-2}+64\pi^2}}$. In our test, we take $\epsilon=1$.

We assume that the homogeneous interface condition without source is satisfied
\begin{align*}
\bm{g}_D=0,\; f_D=0,\; \bm{f}_S=0.
\end{align*}
In addition
$\bm{p}_D$ satisfies homogeneous Dirichlet boundary condition along $y=-2$, otherwise it has homogeneous Neumann boundary condition. We remark that in this example, the exact solution is unknown.

The permeability is taken to be $K=\varrho I$ and the profile of $\varrho$ is shown in Figure~\ref{ex5-a}, where the yellow region represents the permeability with value $10^4$.
\begin{figure}[H]
\centering
\includegraphics[width=6cm]{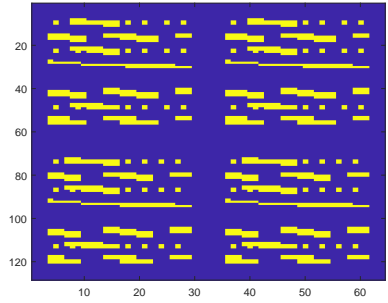}
\caption{Profile of $\varrho$.}
\label{ex5-a}
\end{figure}

The numerical approximations for meshsize $h=1/16$ and $h=1/32$ are reported in Figure~\ref{ex5:solution1} and Figure~\ref{ex5:solution2}, respectively. We can observe that the numerical approximations tend to approach a certain solution.
\begin{figure}[H]
    \centering
    \begin{minipage}[b]{0.4\textwidth}
      \includegraphics[width=1\textwidth]{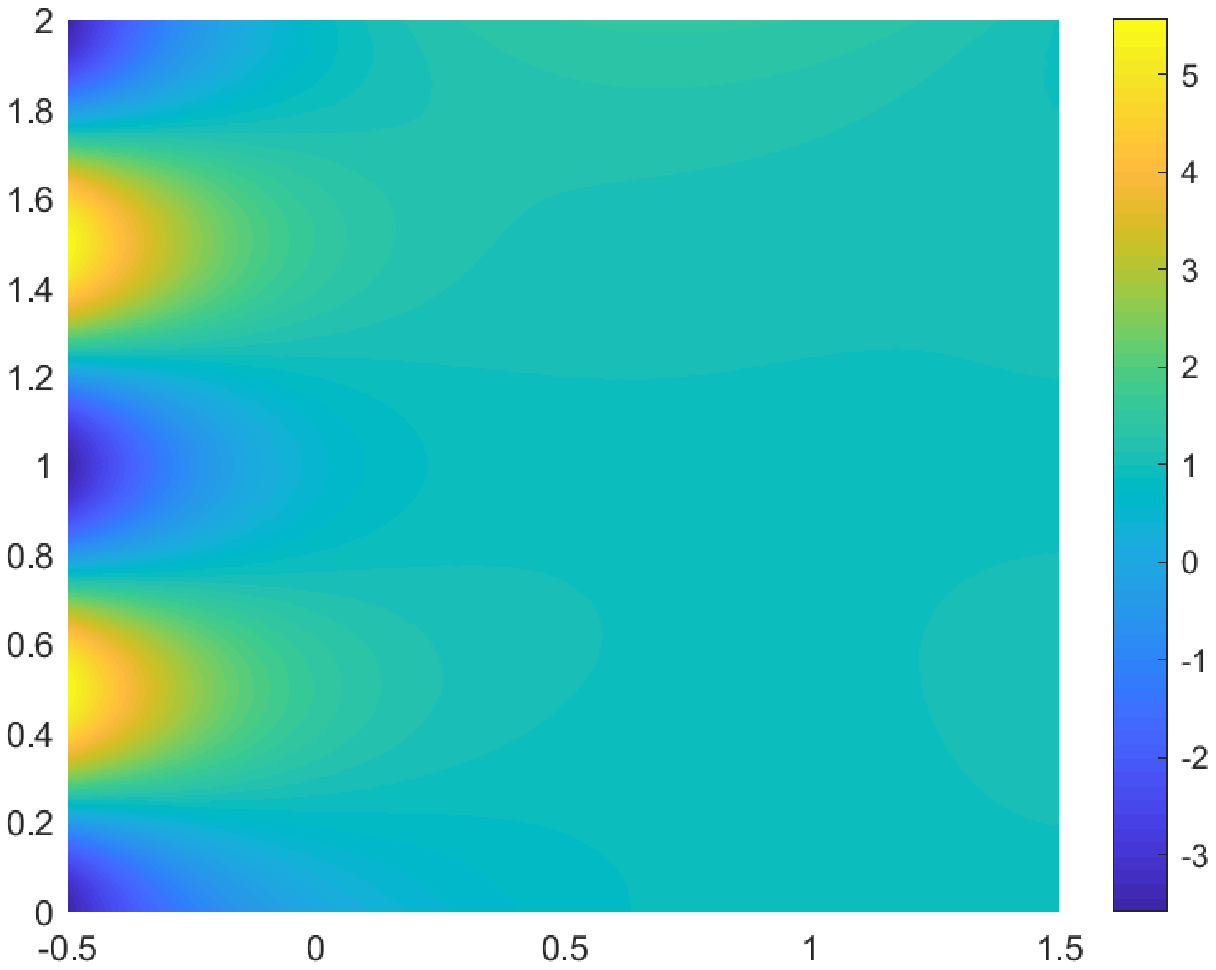}
    \end{minipage}%
    \begin{minipage}[b]{0.4\textwidth}
      \includegraphics[width=1\textwidth]{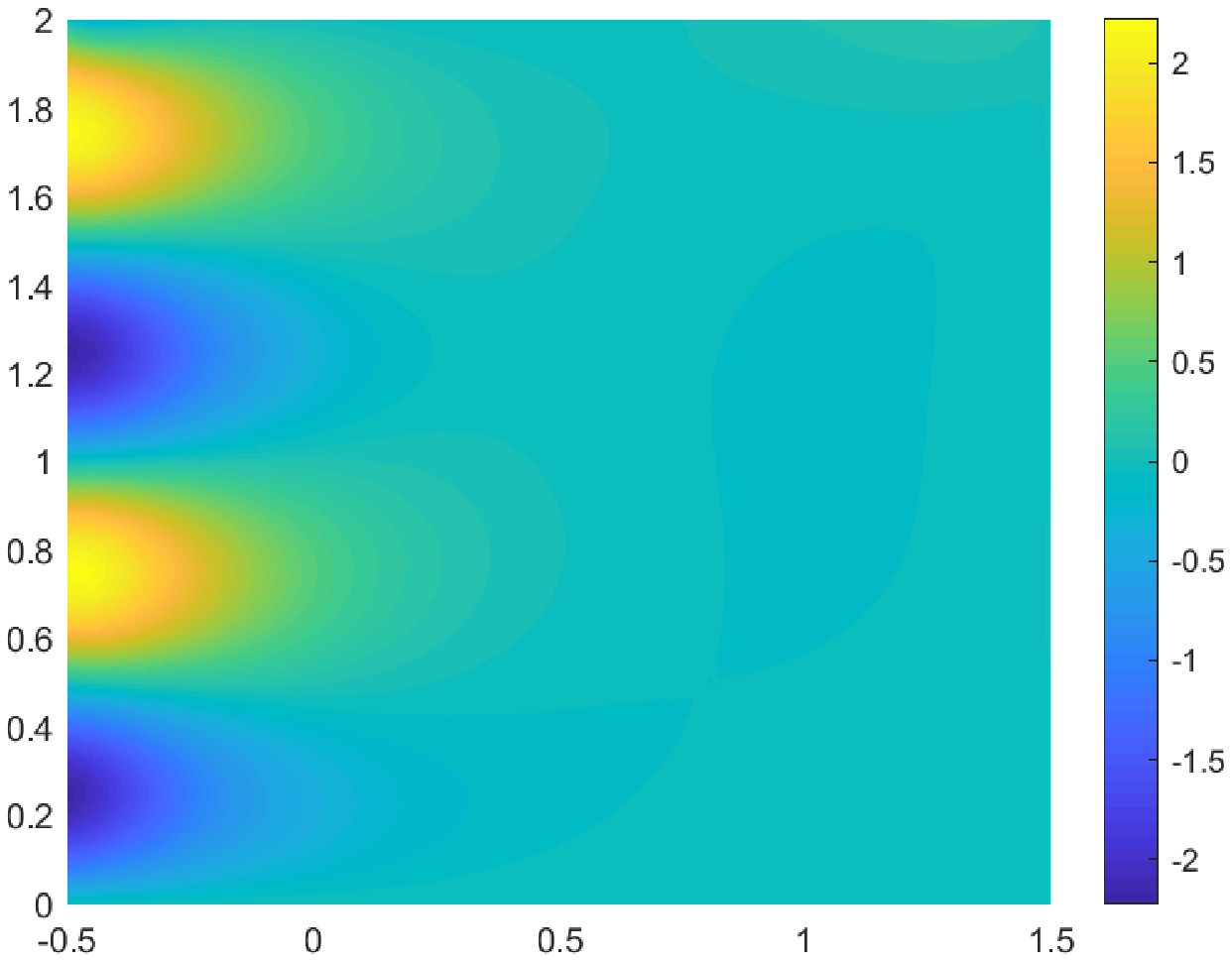}
    \end{minipage}
     \begin{minipage}[b]{0.4\textwidth}
      \includegraphics[width=1\textwidth]{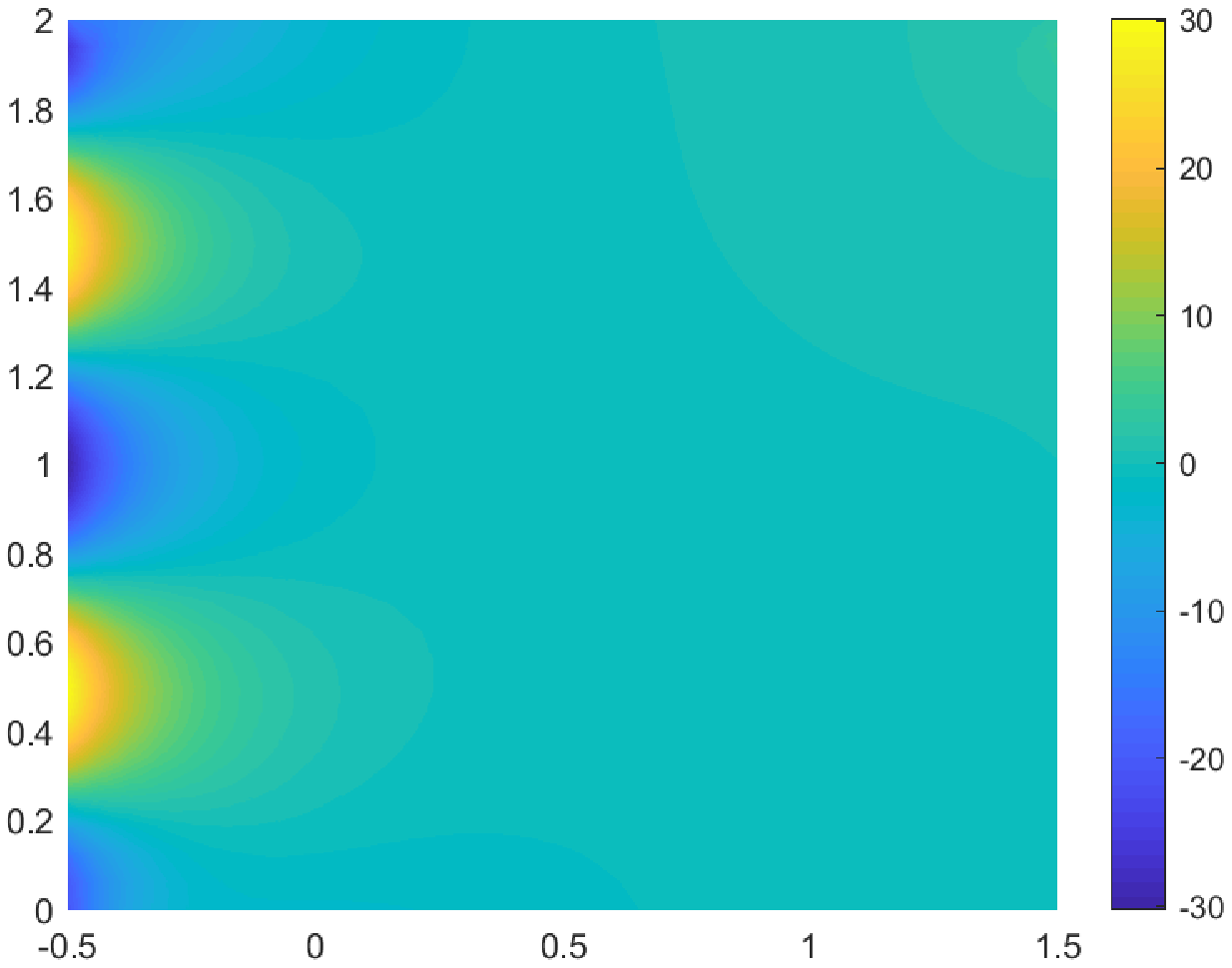}
    \end{minipage}
    \begin{minipage}[b]{0.4\textwidth}
      \includegraphics[width=1\textwidth]{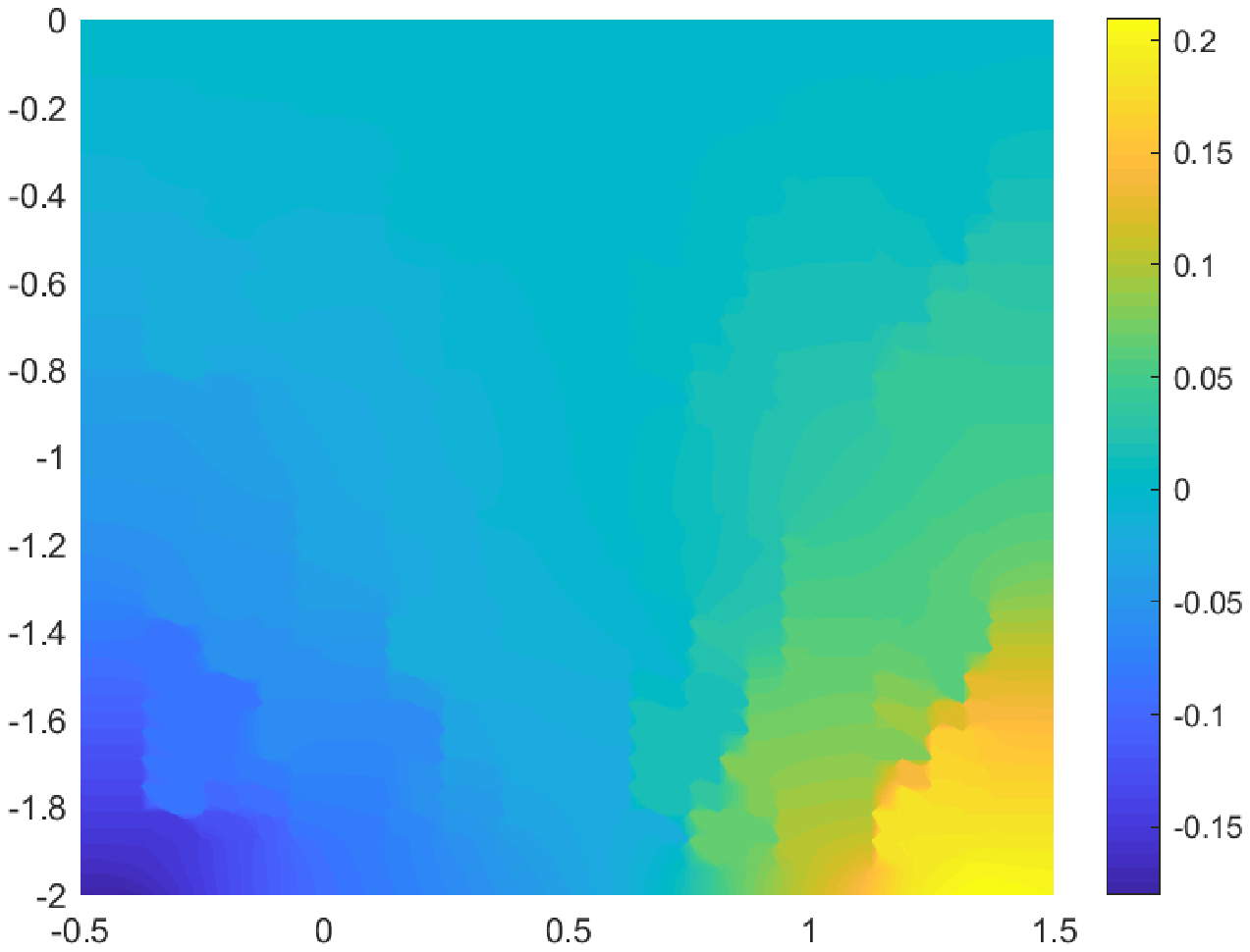}
    \end{minipage}%
  \caption{Numerical solution: $u_{S,h}^1$,$u_{S,h}^2$, $p_{S,h}$ and $p_{D,h}$ (left to right, top to bottom).}
  \label{ex5:solution1}
\end{figure}

\begin{figure}[H]
    \centering
    \begin{minipage}[b]{0.4\textwidth}
      \includegraphics[width=1\textwidth]{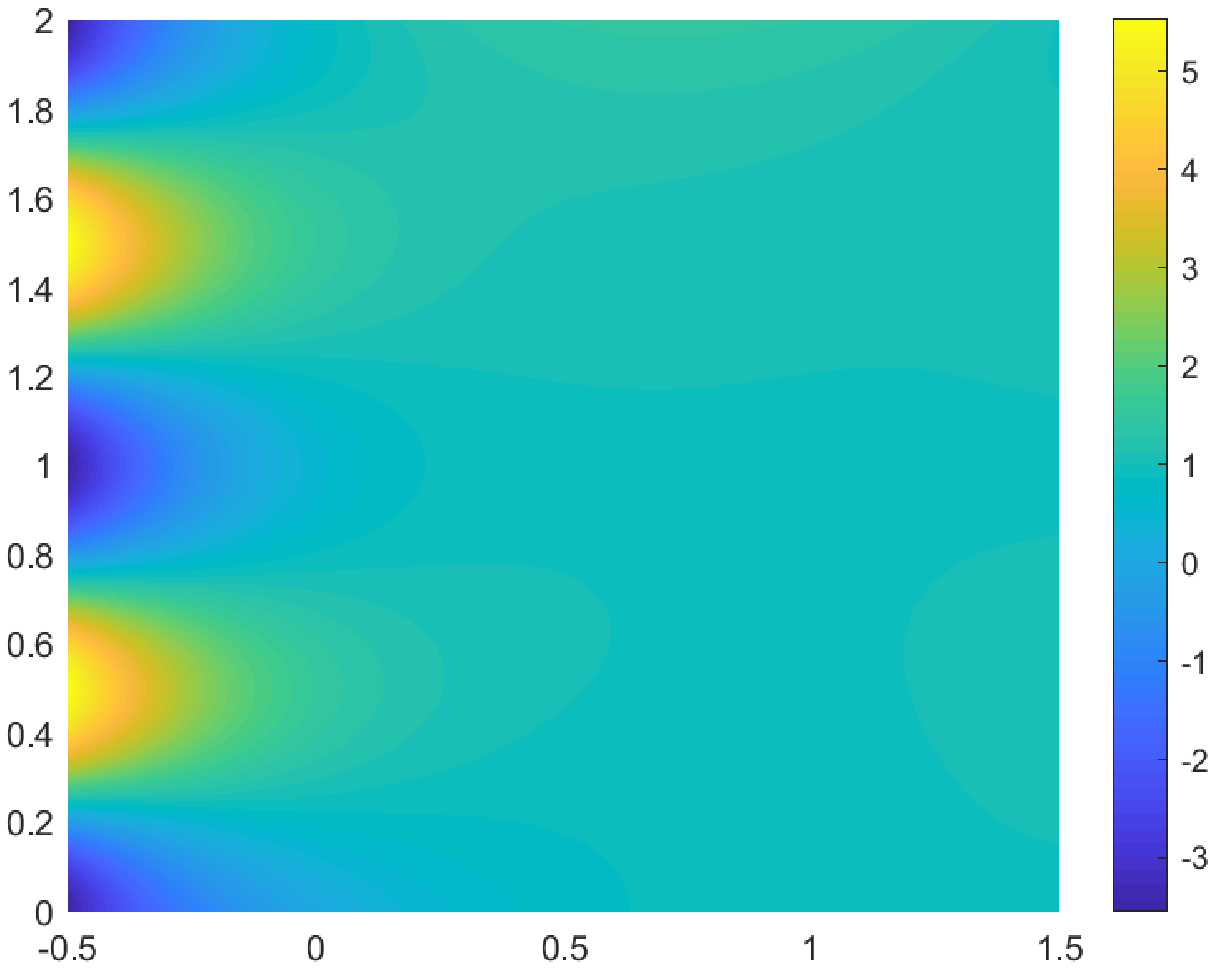}
    \end{minipage}%
    \begin{minipage}[b]{0.4\textwidth}
      \includegraphics[width=1\textwidth]{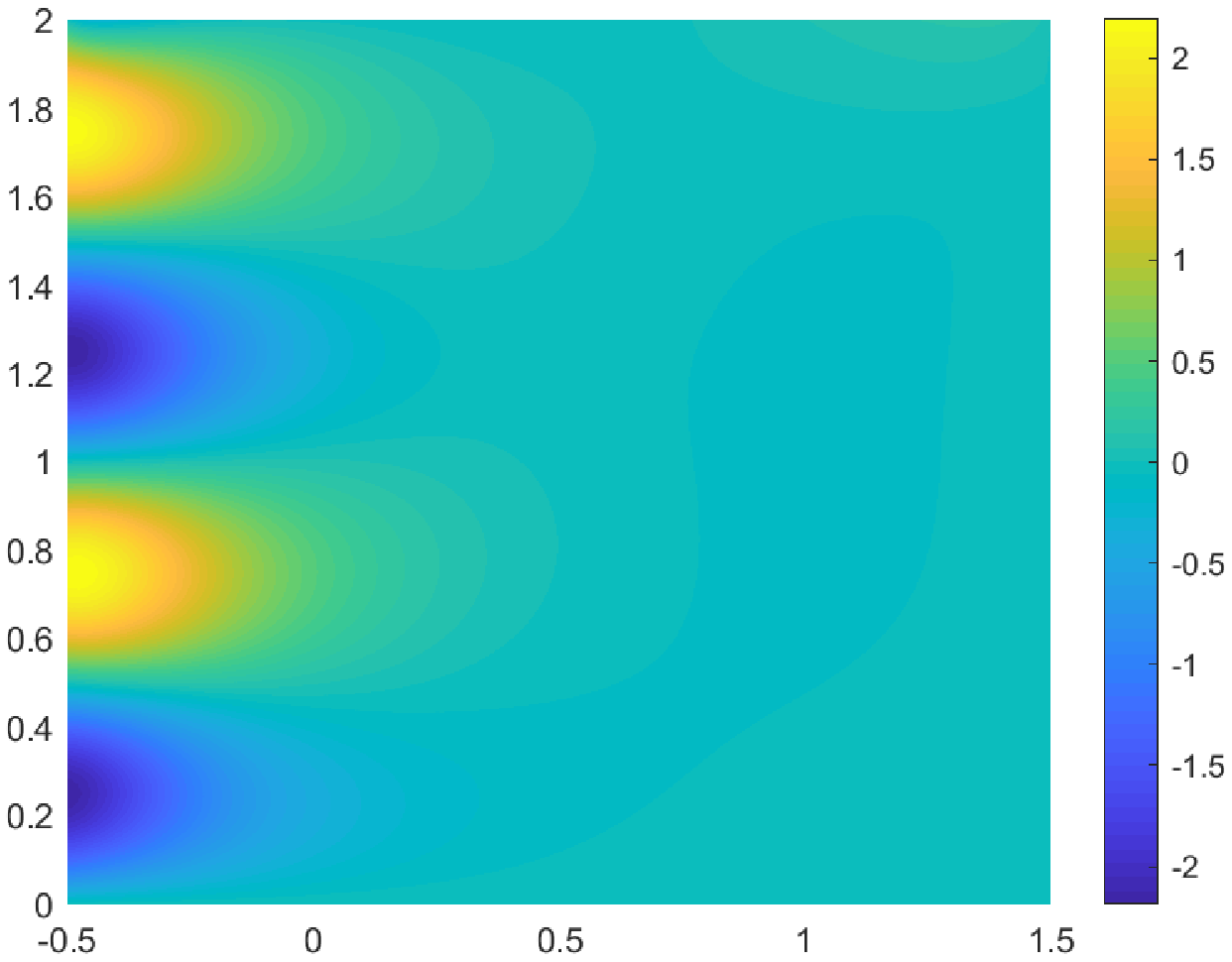}
    \end{minipage}
     \begin{minipage}[b]{0.4\textwidth}
      \includegraphics[width=1\textwidth]{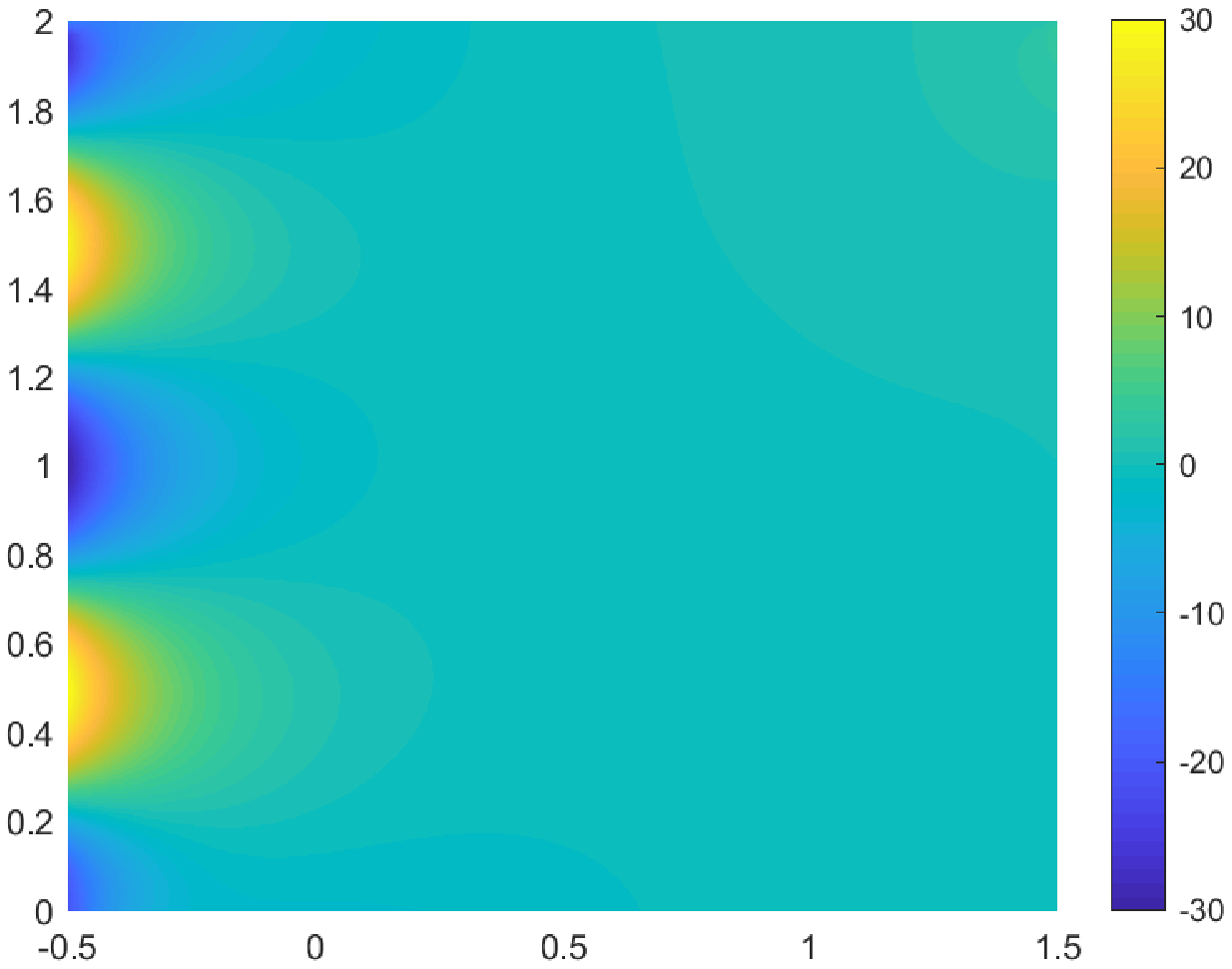}
    \end{minipage}
    \begin{minipage}[b]{0.4\textwidth}
      \includegraphics[width=1\textwidth]{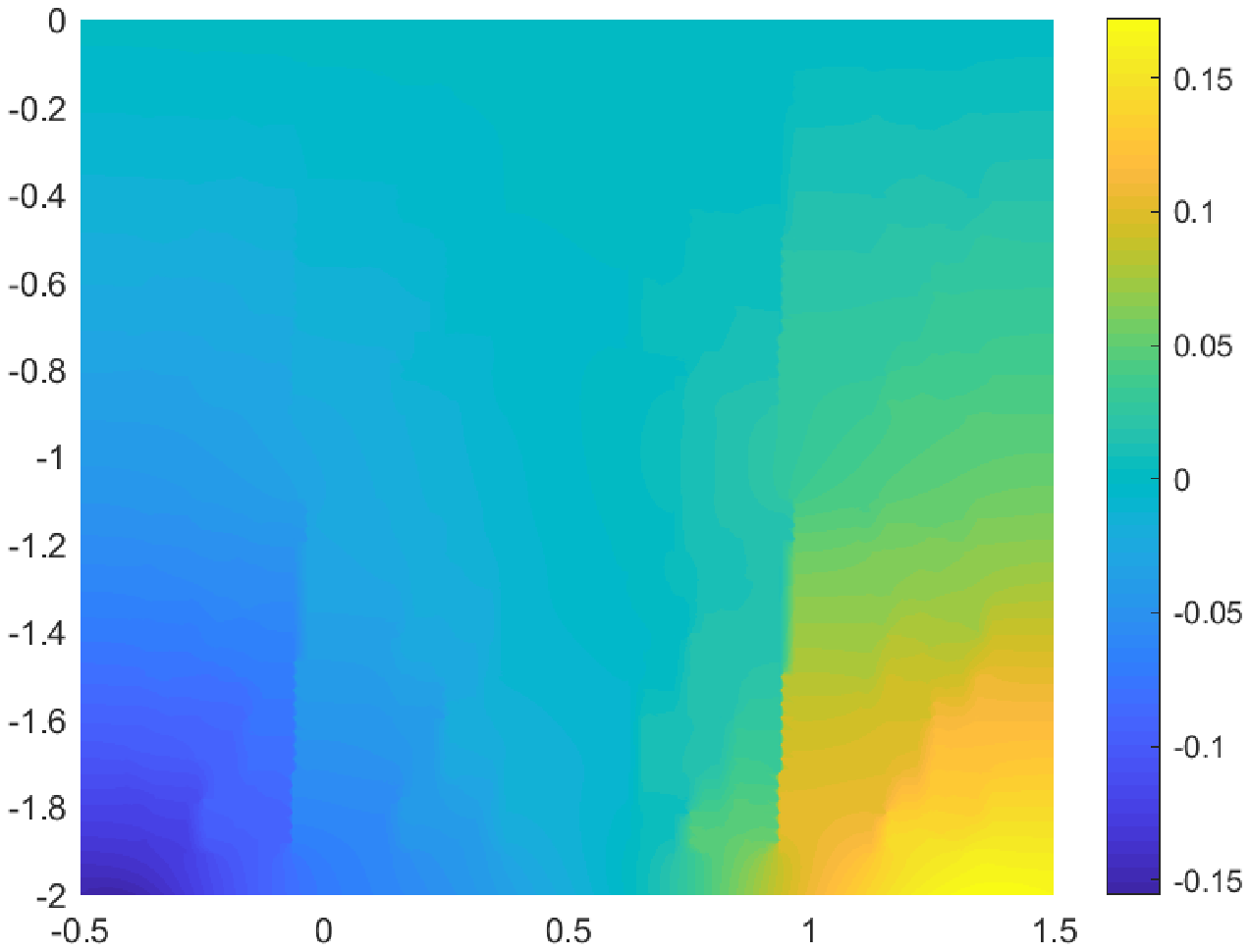}
    \end{minipage}%
  \caption{Numerical solution: $u_{S,h}^1$,$u_{S,h}^2$, $p_{S,h}$ and $p_{D,h}$ (left to right, top to bottom).}
  \label{ex5:solution2}
\end{figure}

\section{Conclusion}\label{sec:conclusion}
In this paper, we develop the first staggered DG method for coupling of the Stokes and Darcy-Forchheimer problems on general quadrilateral and polygonal meshes. Unlike the fully mixed method presented in \cite{Almonacid19}, we impose the interface condition by switching the variable met on the interface. By doing so, no additional variables need to be introduced. In addition, thanks to the special inclusion of the interface conditions, the proposed method allows nonmatching grids across the interface, which is highly appreciated from a practical point of view. By employing the discrete trace inequality and the generalized Poincar\'{e}-Friedrichs inequality, the optimal convergence estimates can also be proved. Finally, several numerical experiments are demonstrated, which indicate that our method is efficient and accurate, in addition, it is highly flexible by allowing highly distorted grids without losing the accuracy and convergence order. Our method is the first one for the coupled Stokes and Darcy-Forchheimer problem that can be flexibly applied to rough grids, and the flexibility of the method makes it a viable option for the practical applications.

\section*{Acknowledgements}

The research of Eric Chung is partially supported by the Hong Kong RGC General Research Fund (Project numbers 14304217 and 14302018), CUHK Faculty of Science Direct Grant 2018-19 and NSFC/RGC Joint Research Scheme (Project number HKUST620/15). \Red{The research of Guanyu Zhou is supported by JSPS KAKENHI 18K13460 and JSPS A3 Foresight Program.}


\begin{thebibliography}{1}

\bibitem{Almonacid19} {\sc J. A. Almonacid, H. S. D\'{i}az, G. N. Gatica, and A. M\'{a}rquez},
{\em A fully mixed finite element method for the coupling of the Stokes and Darcy-Forchheimer problems}, IMA J. Numer. Anal., dry099, https://doi.org/10.1093/imanum/dry099, 2019.

\bibitem{Beaver} {\sc G. S. Beaver and D. D. Joseph},
{\em Boundary conditions at a naturally imperable wall}, J. Fluid. Mech., 30 (1967), pp. 197--207.

\bibitem{Beir13} {\sc L. Beir\~{a}o da Veiga, F. Brezzi, A. Cangiani, G. Manzini, L.D. Marini, and A. Russo},
{\em Basic principles of virtual element method}, Math. Models Methods Appl. Sci., 23 (2013), pp. 199--214.

\bibitem{Braess07} {\sc D. Braess},
{\em Finite elements: theory, fast solvers, and applications in solid mechanics}, 3rd edition, Cambridge University Press, Cambridge (2007).

\bibitem{Brenner} {\sc S. C. Brenner},
{\em Poincar\'{e}-Friedrichs inequalities for piecewise $H^1$ functions}, SIAM
J. Numer. Anal., 41 (2003), pp. 306--324.

\bibitem{BrennerScott08} {\sc S. C. Brenner and L. R. Scott},
{\em The mathematical theory of finite element methods}, Springer, New York, Third edition, 2008.

\bibitem{Brezzi} {\sc F. Brezzi and M. Fortin},
{\em Mixed and Hybrid Finite Element Methods}, Spinger-Verlag, New York, 1991.

\bibitem{BurmanErn18} {\sc E. Burman and A. Ern},
{\em An unfitted hybrid high-order method for elliptic interface problems}, SIAM J. Numer. Anal., 56 (2018), pp. 1525--1546.

\bibitem{Burman07} {\sc E. Burman and P. Hansbo},
{\em A unified stabilized method for Stokes' and Darcy's equations}, J. Comput. Appl. Math., 198 (2007), pp. 35--51.


\bibitem{Cangiani16} {\sc A. Cangiani, E. H. Georgoulis, T. Pryer, and O. J. Sutton},
{\em A posteriori error estimates for the virtual element method}, Numer. Math., 137 (2017), pp. 857--893.


\bibitem{ChungWave06} {\sc E. T. Chung and B. Engquist},
{\em Optimal discontinuous Galerkin methods for wave propagation}, SIAM
J. Numer. Anal., 44 (2006), pp. 2131--2156.

\bibitem{ChungWave09} {\sc E. T. Chung and B. Engquist},
{\em Optimal discontinuous Galerkin methods for the acoustic wave equation in higher dimensions}, SIAM
J. Numer. Anal., 47 (2009), pp. 3820--3848.

\bibitem{ChungEngquist} {\sc E. T. Chung and B. Engquist},
{\em Optimal discontinuous Galerkin methods for the acoustic wave equation in higher dimensions}, SIAM J. Numer. Anal., 47 (2009), pp. 3820--3848.

\bibitem{ChungCiarletYu13} {\sc E. T. Chung, P. Ciarlet, Jr., and T. Yu},
{\em Convergence and superconvergence of staggered discontinuous Galerkin methods for the three-dimensional Maxwell's equations on Cartesian grids}, J. Comput. Phys., 235 (2013), pp. 14--31.

\bibitem{ChungParkLina} {\sc E. T. Chung, E.-J. Park, and L. Zhao},
{\em Guaranteed a posteriori error estimates for a staggered discontinuous Galerkin method}, J. Sci. Comput., 75 (2018), pp. 1079--1101.

\bibitem{Ciarlet78} {\sc P. Ciarlet},
{\em The finite element method for elliptic problems}, North-Holland, Amsterdam, 1978.

\bibitem{PietroErn14} {\sc D. A. Di Pietro, A. Ern, and S. Lemaire},
{\em An arbitrary-order and compact-stencil discretization of diffusion on geenral meshes based on local reconstruction operator}, Comput. Methods Appl. Math., 14 (2014), pp. 461--472.

\bibitem{Discacciati02} {\sc M. Discacciati, E. Miglio, and A. Quarteroni},
{\em Mathematical and numerical models for coupling surface and groundwater flows}, Appl. Numer. Math., 43 (2002), pp. 57--74.

\bibitem{Ebehard} {\sc Z. Ebehard},
{\em Nonlinear Functional Analysis and its Applications \uppercase\expandafter{\romannumeral2}/B Nonlinear Monotone Operators}, Springer-verlag, New york, 1990.

\bibitem{Forchheimer1901} {\sc P. Forchheimer},
{\em Wasserbewegung durch Boden}, Z. Ver. Deutsh. Ing., 45 (1901), pp. 1782--1788.

\bibitem{Gatica11} {\sc G. N. Gatica, R. Oyarz\'{u}a, and F.-J. Sayas},
{\em Analysis of fully mixed finite element methods
for the Stokes-Darcy coupled problem}, Math. Comp., 80 (2011), pp. 1911--1948.

\bibitem{Girault08} {\sc V. Girault and M. F. Wheeler},
{\em Numerical discretization of a Darcy-Forchheimer model}, Numer. Math., 110 (2008), pp. 161--198.

\bibitem{Girault14} {\sc V. Girault, D. Vassilev, and I. Yotov},
{\em Mortar multiscale finite element methods for Stokes-Darcy flows}, Numer. Math., 127 (2014), pp. 93--165.

\bibitem{HouWu97} {\sc T. Y. Hou and X. H. Wu},
{\em A multiscale finite element method for elliptic problems in composite materials and porous media}, J. Comput. Phys. 134 (1997), pp. 169--189.

\bibitem{KimChungLam16} {\sc H. Kim, E. T. Chung, and C. Y. Lam},
{\em Mortar formulation for a class of staggered discontinuous Galerkin methods}, Comput. Math. Appl., 71 (2016), pp. 1568--1585.

\bibitem{KimChungLee} {\sc H. Kim, E. T. Chung, and C. S. Lee},
{\em A staggered discontinuous Galerkin method for the Stokes system}, SIAM J. Numer. Anal., 51 (2013), pp. 3327--3350.

\bibitem{KimPark99} {\sc M.-Y. Kim and E.-J. park},
{\em Fully discrete mixed finite element approximations for Non-Darcy flows in porous media}, Comput. Math. Appl., 38 (1999), pp. 113--129.

\bibitem{Kovasznay48} {\sc I. G. Kovasznay},
{\em Laminar flow behind a two-dimension grid}, Proc. Camb. Philos. SOc. 41 (1948), pp. 58--62.

\bibitem{Layton03} {\sc W. J. Layton, F. Schieweck, and I. Yotov},
{\em Coupling fluid flow with porous media flow}, SIAM J. Numer. Anal., 40 (2003), pp. 2195--2218.

\bibitem{Lipnikov14} {\sc K. Lipnikov, D. Vassilev, and I. Yotov},
{\em Discontinuous Galerkin and mimetic finite difference methods for coupled Stokes-Darcy flows on polygonal and polyhedral grids}, Numer. Math., 126 (2014), pp. 321--360.

\bibitem{Park95} {\sc E.-J. Park},
{\em Mixed finite element methods for nonlinear second order elliptic problems}, SIAM J. Numer. Anal., 32 (1995), pp. 865--885.

\bibitem{Park05} {\sc E.-J. Park},
{\em Mixed finite element methods for generalized Forchheimer flow in porous media}, Numer. Methods Partial Differential Equations, 21 (2005), pp. 213--228.

\bibitem{Riviere052} {\sc B. Rivi\`{e}re},
{\em Analysis of a discontinuous finite element method for the coupled Stokes and Darcy problems}, J. Sci. Comput., 22 (2005), pp. 479--500.

\bibitem{Riviere05} {\sc B. Rivi\`{e}re and I. Yotov},
{\em Locally conservative coupling of Stokes and Darcy flows}, SIAM J. Numer. Anal., 42 (2005), pp. 1959--1977.

\bibitem{RuiLiu15} {\sc H. Rui and W. Liu},
{\em A two-grid block-centered finite difference method for Darcy-Forchheimer flow in porous media}, SIAM J. Numer. Anal., 53 (2015), pp. 1941--1962.

\bibitem{RuiPan12} {\sc H. Rui and H. Pan},
{\em A  block-centered finite difference method for the Darcy-Forchheimer model}, SIAM J. Numer. Anal., 5 (2012), pp. 2612--2631.

\bibitem{Saffman71} {\sc P. G. Saffman},
{\em On the boundary condition at the surface of a porous medium}, Stud. Appl. Math., 50 (1971), pp. 93--101.

\bibitem{WangYe14} {\sc J. Wang and X. Ye},
{\em A weak Galerkin mixed finite element method for second order elliptic problems}, Math. Comp., 83 (2014), pp. 2101--2126.


\bibitem{ZhangRui16} {\sc J. Zhang and H. Rui},
{\em A stabilized Crouzeix-Raviart element method for
coupling Stokes and Darcy-Forchheimer flows}, J. Comput. Phys. 134 (1997), pp. 169--189.

\bibitem{LinaParkStoke-tri} {\sc L. Zhao and E.-J. Park},
{\em Fully computable bounds for a staggered discontiuous Galerkin method for the Stokes equations}, Comput. Math. Appl., 75 (2018), pp. 4115--4134.


\bibitem{LinaPark} {\sc L. Zhao and E.-J. Park},
{\em A staggered discontinuous Galerkin method of
minimal dimension on quadrilateral and polygonal meshes}, SIAM J. Sci. Comput., 40 (2018), pp. 2543--2567.

\bibitem{LinaParkconvection} {\sc L. Zhao and E.-J. Park},
{\em A priori and a posteriori error analysis for a staggered discontinuous Galerkin method for convection dominant diffusion equations}, J. Comput. Appl. Math., 346 (2019), pp. 63--83.

\bibitem{LinaParkShin} {\sc L. Zhao, E.-J. Park, and D.-w. Shin},
{\em A staggered discontinuous Galerkin method for the Stokes equations on general meshes}, Comput. Methods Appl. Mech. Engrg., 345 (2019), pp. 854--875.

\end{thebibliography}
\end{document}